\documentclass{amsart}
\usepackage{geometry}
\usepackage{mathrsfs}
\usepackage[T1]{fontenc}
\newcommand{\spf}{\mathrm{Spf}}
\usepackage{slashed}
\usepackage{verbatim}
\usepackage{amsmath,amsfonts,amsthm,amssymb,epsfig}
\usepackage[all]{xy}
\usepackage{enumitem}
\usepackage[usenames,dvipsnames]{xcolor}
\usepackage[bookmarks=true,colorlinks=true, linkcolor=Mulberry, citecolor=Periwinkle]{hyperref}
\usepackage{cleveref} 
\setcounter{tocdepth}{1}

\newcommand{\zpcycl}{\mathbb{Z}_p^{\mathrm{cycl}}}
\newcommand{\THH}{\mathrm{THH}}
\newcommand{\Dh}{\widehat{\mathcal{D}}}
\newcommand{\OmegaD}{\hat{\Omega}^{\slashed{D}}}

\newcommand{\wcartHT}{\mathrm{WCart}^{\mathrm{HT}}}

\newcommand{\qrsp}{\mathrm{qrsPerfd}}

\newcommand{\tN}{\widetilde{\mathcal{N}}}

\usepackage{graphicx}
\usepackage{float}

\newcommand{\et}{\mathrm{et}}

\newtheorem{theorem}{Theorem}[section]

\newtheorem{lemma}[theorem]{Lemma}
\newtheorem{proposition}[theorem]{Proposition}
\newtheorem{corollary}[theorem]{Corollary}

\theoremstyle{definition}

\newtheorem{construction}[theorem]{Construction}
\newcommand{\spec}{\mathrm{Spec}}
\newtheorem{definition}[theorem]{Definition}
\newtheorem{example}[theorem]{Example}

\newtheorem{notation}[theorem]{Notation}
\newtheorem{remark}[theorem]{Remark}

\newcommand\bcdot{\ensuremath{%
  \mathchoice%
   {\mskip\thinmuskip\lower0.2ex\hbox{\scalebox{1.5}{$\cdot$}}\mskip\thinmuskip}}%
   {\mskip\thinmuskip\lower0.2ex\hbox{\scalebox{1.5}{$\cdot$}}\mskip\thinmuskip}%
   {\lower0.3ex\hbox{\scalebox{1.2}{$\cdot$}}}%
   {\lower0.3ex\hbox{\scalebox{1.2}{$\cdot$}}}%
   }

\usepackage{relsize}
\usepackage[bbgreekl]{mathbbol}
\usepackage{amsfonts}
\DeclareSymbolFontAlphabet{\mathbb}{AMSb} 
\DeclareSymbolFontAlphabet{\mathbbl}{bbold}
\newcommand{\Prism}{{{\mathlarger{\mathbbl{\Delta}}}}}
\newcommand{\Prismnc}{{{\mathlarger{\mathbbl{\Delta}}}}}
\newcommand{\Prismbar}{{\overline{{\mathlarger{\mathbbl{\Delta}}}}}}

\newcommand{\Prismp}[2]{\frac{\Prism_{#1}\{#2\}}{p}}

\bibliographystyle{alpha}
\makeatletter
\@namedef{subjclassname@2020}{\textup{2020} Mathematics Subject Classification}
\makeatother
\begin{document}

\newcommand{\ksel}{K^{Sel}}
\newcommand{\qsyn}{\mathrm{qSyn}}
\newcommand{\mot}{\mathrm{mot}}
\title{Syntomic complexes and $p$-adic \'etale Tate twists}
\author{Bhargav Bhatt} 
\address{Department of Mathematics, University of Michigan, 530 Church Street,
Ann Arbor, MI 48109--1043}
\email{bhargav.bhatt@gmail.com}
\author{Akhil Mathew}
\address{Department of Mathematics, University of Chicago, 5734 S University
Ave., Chicago, IL, 60637}
\email{amathew@math.uchicago.edu}
\subjclass[2020]{14F30, 14F42}
\date{\today}

\maketitle

\begin{abstract}
The primary goal of this paper is to identify syntomic complexes with the
$p$-adic \'etale Tate twists of Geisser--Sato--Schneider on regular $p$-torsionfree schemes. Our methods apply naturally to a broader class of schemes that we call ``$F$-smooth''. The $F$-smoothness of regular schemes leads to new results on the absolute prismatic cohomology of regular schemes. 
\end{abstract}

\section{Introduction}

Let $X$ be a scheme. 
In \cite[Sec.~8]{BhattLurieAPC}, the first author and Lurie, following the
earlier work \cite{BMS2}, 
define and study certain \emph{syntomic complexes} $\mathbb{Z}_p(i)(X) = R \Gamma_{\mathrm{syn}}(X,
\mathbb{Z}_p(i))$ for $i \in \mathbb{Z}$, extending earlier constructions in the
literature \cite{FM87, Kato}. 
These syntomic complexes yield a generalization of the $p$-adic \'etale
cohomology (with Tate twisted coefficients) for $\mathbb{Z}[1/p]$-schemes to
arbitrary schemes, and exhibit quite different behavior in positive and mixed
characteristic, where they are obtained from prismatic cohomology. 
We refer to \cite[Sec.~1.1]{CN17} for a survey of
applications of syntomic cohomology. 

The purpose of this paper is to identify the syntomic complexes as \'etale
sheaves on $X$ in a class of examples. In doing so, we generalize a number of existing
results in the literature, including those of \cite{Kurihara, Kato, Tsuji,
CN17}, and recover the $p$-adic \'etale Tate twists of \cite{Schneider, Gei04,
Sato}. 

\subsection{What is syntomic cohomology?}

To formulate our results,  it is convenient to name the restriction of syntomic cohomology to the small \'etale site.

 \begin{notation}[The complexes $\mathbb{Z}/p^n(i)_X$] 
For any scheme $X$ and integer $i \in \mathbb{Z}$, write $\mathbb{Z}/p^n(i)_X
\in \mathcal{D}(X_{\mathrm{et}}, \mathbb{Z}/p^n)$ for the object of the derived
$\infty$-category of \'etale sheaves of $\mathbb{Z}/p^n$-modules on $X$ obtained
by restricting the syntomic complexes\footnote{In \cite[\S 8]{BhattLurieAPC},
the object $\mathbb{Z}_p(i)(X)$ is defined in the $p$-complete derived
$\infty$-category; by $\mathbb{Z}/p^n(i)(X)$ we mean the reduction mod $p^n$.
Note also that the construction $X \mapsto \mathbb{Z}/p^n(i)(X)$ on affine
schemes carries
filtered colimits 
of rings to filtered colimits \cite[Cor.~8.4.11]{BhattLurieAPC}.} $\mathbb{Z}/p^n(i)(-)$ of \cite[\S 8]{BhattLurieAPC} to the small \'etale site $X_{\mathrm{et}}$ of $X$. Thus, we have a defining identification $R\Gamma(X, \mathbb{Z}/p^n(i)_X) \simeq \mathbb{Z}/p^n(i)(X)$.
\end{notation} 

Let us describe this object in the key examples.

\begin{example}[Syntomic cohomology in characteristic $\neq p$]
\label{recovertatetwist}
For any $X$, the restriction of $\mathbb{Z}/p^n(i)_X$ to the locus $X[1/p]
\subset X$ is given by 
$\mu_{p^n}^{\otimes i} \simeq (\mathbb{Z}/p^n(i)_X)|_{X[1/p]}$.  In particular, if $p$ is invertible on $X$, then $\mathbb{Z}/p^n(i)_X$ is simply the usual \'etale Tate twist $\mu_{p^n}^{\otimes i}$. 
\end{example} 

\begin{example}[Syntomic complexes via logarithmic de Rham--Witt sheaves in characteristic $p$]
\label{syntomicequalchar}
When $X$ is a regular $\mathbb{F}_p$-scheme, we have isomorphisms
$\mathbb{Z}/p^n(i)_X \simeq W_n \Omega^i_{\mathrm{log}, X}[-i]$ for $W_n
\Omega^i_{\mathrm{log}, X}$ the
logarithmic Hodge--Witt sheaves considered in \cite{Milne76, Illusie79, Gros85},
cf.~\cite[Sec.~8]{BMS2}. 
\end{example} 

\begin{example}[Syntomic cohomology of $p$-adic formal schemes]
\label{ex:Syntomicpformal}
For any scheme $X$, 
the pullback of $\mathbb{Z}/p^n(i)_X$ to the \'etale site of the $p$-adic completion $\widehat{X}$ (or equivalently that of $X/p$) is constructed as  
a filtered Frobenius eigenspace of prismatic cohomology first studied in
\cite{BMS2}. 
That is, if $X = \spec(R)$ for $R$ a $p$-henselian animated ring, then 
one has an expression 
\begin{equation} \mathbb{Z}_p(i)(X) = 
\mathrm{eq} \left( \mathrm{can}, \phi_i: \mathcal{N}^{\geq i}
\Prism_X\left\{i\right\} \rightrightarrows \Prism_X\left\{i\right\} \right).
\end{equation}
Here $\Prism_X\left\{i\right\}$ denotes the Breuil--Kisin twisted (absolute)
prismatic cohomology of $X$, $\mathcal{N}^{\geq \ast}$ denotes the Nygaard
filtration, $\phi_i$ denotes the $i$th divided Frobenius, and $\mathrm{can}$
denotes the inclusion map. We refer to \cite[Sec.~7]{BhattLurieAPC} for a
detailed treatment of these objects.

Earlier versions of this construction (which agree with the above for $i \leq p-2$
or up to isogeny; cf.~\cite[Sec.~6]{AMNN} for comparisons) were introduced in \cite{FM87, Kato} using crystalline
cohomology and the Hodge filtration instead of prismatic cohomology and the
Nygaard filtration.  
\end{example} 

Examples~\ref{recovertatetwist} and \ref{ex:Syntomicpformal} essentially suffice to describe syntomic cohomology in general via a glueing procedure: if $R$ is a commutative ring with $p$-henselization $R^h_{(p)}$, one has a fibre square
\[ \xymatrix{ \mathbb{Z}/p^n(i)(\mathrm{Spec}(R)) \ar[r] \ar[d] &
R\Gamma_{\et}(\mathrm{Spec}(R[1/p]), \mu_{p^n}^{\otimes i}) \ar[d] \\
\mathbb{Z}/p^n(i)(\mathrm{Spec}(R^h_{(p)})) \ar[r] &
R\Gamma_{\et}(\mathrm{Spec}(R^h_{(p)}[1/p]), \mu_{p^n}^{\otimes i}), }\]
where the terms on the right are usual \'etale cohomology
(cf.~\Cref{recovertatetwist}), the term on the bottom left is computed via prismatic cohomology as in Example~\ref{ex:Syntomicpformal}, and the bottom horizontal map is obtained from the prismatic logarithm and the \'etale comparison theorem for prismatic cohomology in \cite[\S 8.3]{BhattLurieAPC}.  In fact, this approach was used as the definition of the top left vertex in \cite[\S 8.4]{BhattLurieAPC}. 

For any scheme $X$, the complex $\mathbb{Z}/p^n(0)_X$ identifies with the constant sheaf $\mathbb{Z}/p^n$ on $X_{\mathrm{et}}$. One can also make the complex explicit in weight $1$:

\begin{example}[{Cf.~\cite[Prop.~8.4.14]{BhattLurieAPC}}] 
\label{Zp1Gm}
For any scheme $X$, one has that $\mathbb{Z}/p^n(1)_X $ is the derived
pushforward of $\mu_{p^n}$ from the fppf site to the \'etale site (or
equivalently the fiber of $p^n: \mathbb{G}_m \to \mathbb{G}_m$ in the derived
category of \'etale sheaves). 
\end{example} 

Finally, for completeness, we recall that syntomic cohomology also has a close connection to
$p$-adic $K$-theory, yielding a simple construction of the former which appears
in \cite{Niziol12}. For this, we recall 
(\cite[Sec.~4]{BMS2} or \cite[App.~C]{BhattLurieAPC})
that a ring $R$ is \emph{$p$-quasisyntomic} if 
it has bounded $p$-power torsion and $L_{R/\mathbb{Z}} \otimes^{\mathbb{L}}_R
R/pR \in \mathcal{D}(R/pR)$ has $\mathrm{Tor}$-amplitude in $[-1, 0]$; for
instance, any lci noetherian ring has this property.

\begin{example}[The $\mathbb{Z}/p^n(i)_X$ via algebraic $K$-theory] 
Let $X$ be a  $p$-quasisyntomic scheme. 
In this case, one can give a direct construction of the $\mathbb{Z}/p^n(i)_X$
using algebraic $K$-theory for $i \geq 0$. 
Namely, $\mathbb{Z}/p^n(i)_X \in \mathcal{D}(X_{\mathrm{et}}, \mathbb{Z}/p^n)$
is the
derived pushforward of the sheafification of the presheaf $K_{2i}(-; \mathbb{Z}/p^n)$ from the syntomic site
of $X$ to the \'etale site of $X$. This is essentially a consequence of the work
\cite{BMS2} and  rigidity 
\cite{Gabber92, Su83, CMM}
and will be discussed in more detail separately. 
\end{example} 

The connection to algebraic $K$-theory does not play a direct role in this article; nonetheless, the connection to topological Hochschild homology provided by the $K$-theoretic approach inspired many of the arguments in this paper.

\subsection{Results}

Syntomic cohomology is essentially $p$-adic \'etale motivic
cohomology where the latter is defined,  cf.~\cite{Gei04, NiziolICM, EN19}.
For example, syntomic cohomology admits a robust theory of
Chern classes. However, the syntomic complexes are defined for arbitrary schemes through the theory of
prismatic cohomology, without any explicit use of algebraic cycles. 
We will identify syntomic cohomology for a class of $p$-torsionfree ``$F$-smooth'' schemes and obtain a formula related to the Beilinson--Lichtenbaum conjecture in
motivic cohomology. 
To begin, let us formulate the definition of $F$-smoothness.

\begin{definition}[$F$-smoothness, \Cref{filtSegaldef} below] 
We say that a $p$-quasisyntomic ring $R$ is {\em $F$-smooth}
if for each $i$, the prismatic divided Frobenius $\phi_i: \mathcal{N}^i \Prism_{{R}} \to
\Prismbar_{{R}} \left\{i\right\}$ has fiber in $\mathcal{D}(R)$ with $p$-complete
$\mathrm{Tor}$-amplitude in degrees $\geq i+2$, and if the Nygaard filtration
on the (twisted) prismatic cohomology 
$\Prism_R\left\{i\right\}$ is complete. This definition globalizes to schemes in a natural way. 
\end{definition} 

The terminology ``$F$-smooth'' is meant to evoke both the Frobenius (used in the
definition) as well as the hypothetical ``field with one element'': for
$p$-complete rings, we view $F$-smoothness roughly as an absolute version of the
smoothness condition in algebraic geometry.  Correspondingly, the class of
$F$-smooth rings contains smooth algebras over perfectoid rings
(\Cref{FSmoothPerfd}) and  for $p$-complete noetherian rings $F$-smoothness is equivalent to regularity (\Cref{regularringsSegal}). 
The verification that regular rings are $F$-smooth (and in particular the
Nygaard-completeness of the prismatic cohomology) has a further application:
under excellence assumptions, we verify a cohomological bound on the Hodge--Tate
stack of a regular local ring suggested in
\cite[Conj.~10.1]{BhattLurieprismatization}. 
In equal characteristic $p$, $F$-smoothness is
equivalent to the condition of Cartier smoothness identified in \cite{KM21,
KST21}. 
Over a perfectoid base, this condition has been studied independently in work of
V.~Bouis \cite{Bouis}; most of the following identification (\Cref{Zpiregularschemes})
of the $\mathbb{Z}/p^n(i)_X$ in this case has also been proved by Bouis,
cf.~\cite[Th.~4.14]{Bouis}. 

Let us now formulate the main comparison. 
By adjunction and \Cref{recovertatetwist}, for any scheme $X$, we have a 
natural map $\mathbb{Z}/p^n(i)_X \to Rj_* ( \mu_{p^n}^{\otimes i})$, for $j:
X[1/p] \subset X$ the open inclusion. 
For $i \geq 0$, results of \cite{AMNN} give that $\mathbb{Z}/p^n(i)_X \in
\mathcal{D}^{[0,
i]}(X_{\mathrm{et}}, \mathbb{Z}/p^n)$, whence we obtain a canonical comparison 
$\mathbb{Z}/p^n(i)_X \to \tau^{\leq i} Rj_* ( \mu_{p^n}^{\otimes i})$. 
In general, the Kummer map (obtained from \Cref{Zp1Gm} and the cup product) induces a map 
$(\mathcal{O}_X^{\times })^{\otimes i} \to \mathcal{H}^i ( \mathbb{Z}/p^n(i)_X)$
which one can show to be surjective; see also \cite{LM21} for more on the
target; this determines the image of $\mathcal{H}^i(
\mathbb{Z}/p^n(i)_X) \to R^i j_* ( \mu_{p^n}^{\otimes i})$ as the subsheaf
generated by $\mathcal{O}_{X}^{\times}$-symbols.\footnote{Note that by 
\cite{BK86, Hyodo88, SaitoSato}, 
the sheaf $R^i j_* ( \mu_{p^n}^{\otimes i})$ is generated by symbols from
$\mathcal{O}_{X[1/p]}^{\times}$ in a wide variety of settings.} 

\begin{theorem} 
\label{Zpiregularschemes}
Let $X$ be a $p$-torsionfree $F$-smooth scheme (e.g., a regular scheme flat over $\mathbb{Z}$). 
For $i \geq 0$, the comparison map $\mathbb{Z}/p^n(i)_X \to \tau^{\leq i} Rj_* (
\mu_{p^n}^{\otimes i})$ is an isomorphism on cohomology in degrees $< i$. 
On $\mathcal{H}^i$, the comparison map
is injective with image generated by the symbols, using the map of \'etale
sheaves
$(\mathcal{O}_X^{\times})^{\otimes i} \to \mathcal{H}^i( Rj_*
\mu_{p^n}^{\otimes i})  $. 
\end{theorem}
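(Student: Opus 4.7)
The plan is to reduce the claim, via the Beauville--Laszlo fibre square of the introduction, to a local question on the $p$-adic formal completion of $X$, and then to compute the comparison cofibre in prismatic terms, using the $F$-smoothness hypothesis to obtain the required cohomological concentration.

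\textbf{Reduction.} I would first sheafify the fibre square of the introduction: the fibre of $\mathbb{Z}/p^n(i)_X \to Rj_*\mu_{p^n}^{\otimes i}$ is supported on $\{p=0\} \subset X$ and coincides there with the analogous fibre on the $p$-adic completion $\widehat{X}$. Working étale-locally, the theorem reduces to the following local statement: for $R$ a $p$-complete $F$-smooth $p$-torsionfree local ring, the fibre of $\mathbb{Z}/p^n(i)(R) \to R\Gamma_{\et}(R[1/p],\mu_{p^n}^{\otimes i})$, after étale sheafification, is concentrated in cohomological degrees $\geq i+1$ -- which is exactly what makes the comparison an isomorphism on $\mathcal{H}^{<i}$ and an injection on $\mathcal{H}^i$.

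\textbf{Prismatic computation via $F$-smoothness.} Next, I would express both sides prismatically: the syntomic complex as the fibre of $\phi_i - \mathrm{can}$ on $\mathcal{N}^{\geq i}\Prism_R\{i\}/p^n \to \Prism_R\{i\}/p^n$ via \Cref{ex:Syntomicpformal}, and the $p$-adic étale cohomology of the generic fibre as the fibre of $\phi - 1$ on $(\Prism_R\{i\}[1/I])^\wedge_p/p^n$ by the étale comparison theorem of \cite[\S 8.3]{BhattLurieAPC}. The comparison map is the natural map between these two fibre sequences, induced by the inclusion $\mathcal{N}^{\geq i}\Prism_R\{i\} \hookrightarrow \Prism_R\{i\} \to (\Prism_R\{i\}[1/I])^\wedge_p$ together with the identity $\phi = d^i \phi_i$ on Nygaard pieces. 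The cofibre is controlled by the Nygaard filtration on $\Prism_R\{i\}$: using Nygaard-completeness (which is part of $F$-smoothness), one reduces to the graded pieces $\gr^j_{\mathcal N}\Prism_R\{i\}$ for $j < i$, and for each such $j$ the $F$-smoothness bound on $\phi_j : \mathcal{N}^j\Prism_R \to \Prismbar_R\{j\}$ (fibre in $p$-complete Tor-amplitude $\geq j+2$) translates, after twisting and taking $\phi$-fixed points, into a contribution of $p$-complete Tor-amplitude $\geq i+1$. Assembling across $j$ gives the concentration claim.

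\textbf{Symbols on $\mathcal{H}^i$.} For the identification of the image, the Kummer map of \Cref{Zp1Gm} together with cup products yields $(\mathcal{O}_X^{\times})^{\otimes i} \to \mathcal{H}^i(\mathbb{Z}/p^n(i)_X)$, compatibly with the classical Galois symbol map on $X[1/p]$; so the image in $\mathcal{H}^i(Rj_*\mu_{p^n}^{\otimes i})$ contains the symbol subsheaf. Surjectivity of the cup-product map onto $\mathcal{H}^i(\mathbb{Z}/p^n(i)_X)$ (asserted in the introduction) comes by reducing to the top Nygaard piece, where $d\log$-classes generate, and using $F$-smoothness to lift through the Nygaard filtration.

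\textbf{Main obstacle.} The hardest part will be the concentration estimate in the second paragraph: extracting the right $p$-complete Tor-amplitude bound from $F$-smoothness while correctly handling the Breuil--Kisin twists and the interaction of the Nygaard and Hodge--Tate filtrations, and in particular ensuring that the graded-piece bounds survive the inverse limit that assembles them -- precisely why Nygaard-completeness is built into the definition of $F$-smoothness.
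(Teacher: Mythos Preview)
Your reduction step and the overall architecture are right, and they match the paper. The substantive divergence is in the ``prismatic computation'' paragraph, and there I think your sketch has a real gap.

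You propose to compare the syntomic equalizer with an equalizer on $(\Prism_R\{i\}[1/I])^\wedge_p$ and then say the cofibre ``reduces to the graded pieces $\gr^j_{\mathcal N}\Prism_R\{i\}$ for $j<i$.'' Two problems. First, for \emph{absolute} prismatic cohomology there is no fixed ideal $I$ to invert; the \'etale comparison in \cite{BhattLurieAPC} is not packaged that way, and the paper deliberately uses a different form of it (\Cref{etcompthm}): one recovers $\mathbb{F}_p(i)(R[1/p])$ by inverting the class $v_1\in H^0(\mathbb{F}_p(p-1)(\mathbb{Z}))$ on the tower $\mathbb{F}_p(i)(R)\xrightarrow{v_1}\mathbb{F}_p(i+p-1)(R)\to\cdots$. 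Second, even granting your formulation, the cofibre of $\mathcal{N}^{\geq i}\Prism_R\{i\}\to\Prism_R\{i\}[1/I]$ is not built only from Nygaard pieces $j<i$; there is also the passage from $\Prism$ to $\Prism[1/I]$, and the mismatch between $\phi_i-\mathrm{can}$ and $\phi-1$. Your one-line ``translates, after twisting and taking $\phi$-fixed points'' is where the actual work hides, and I do not see how the $F$-smoothness bound on $\mathrm{fib}(\phi_j)$ alone closes it.

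What the paper does instead is reduce to showing that $v_1\colon\mathbb{F}_p(i)(A)\to\mathbb{F}_p(i+p-1)(A)$ has fibre in $\mathcal{D}^{\geq i+1}$. The fibre of $v_1$ is the equalizer of the induced canonical and Frobenius maps between $\mathrm{fib}\bigl(\mathcal{N}^{\geq i}\xrightarrow{v_1}\mathcal{N}^{\geq i+p-1}\bigr)$ and $\mathrm{fib}\bigl(\Prism\xrightarrow{v_1}\Prism\bigr)$. The Frobenius map here has highly coconnected fibre directly from $F$-smoothness (filter by Nygaard and use the bound on each $\phi_j$). The point you are missing is the canonical map: the paper observes that $v_1\in\mathcal{N}^{\geq p-1}$ lifts to an element $\tilde\theta\in\mathcal{N}^{\geq p}$ (one filtration step \emph{higher}; see \Cref{widetildethetaimage}), so the canonical map factors through $\mathrm{fib}\bigl(\mathcal{N}^{\geq i-1}\xrightarrow{\tilde\theta}\mathcal{N}^{\geq i+p-1}\bigr)$, which lies in $\mathcal{D}^{\geq i+1}$ by a filtered version of the $F$-smoothness bound (\Cref{lemfiltsegal}). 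That extra Nygaard degree carried by $\tilde\theta$ is the mechanism that makes the argument close; your proposal has no analogue of it.

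For the symbol statement on $\mathcal{H}^i$, the paper's route is also quite different from yours. Rather than ``reducing to the top Nygaard piece,'' it left-Kan-extends to reduce to $R$ ind-smooth over $\mathbb{Z}$, then invokes Bloch--Kato's description of $i^*R^ij_*\mu_p^{\otimes i}$ to identify the kernel of the residue map with the subgroup generated by symbols from $R^\times$ (\Cref{symbolsworkedout}), and shows separately that any natural transformation $\mathbb{F}_p(i)(R)\to\mathbb{F}_p(i-1)(R/p)[-1]$ vanishes (\Cref{naturalcompositevanishes}), forcing the image of $H^i(\mathbb{F}_p(i)(R))$ to land in that kernel. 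Your ``$d\log$-classes generate the top Nygaard piece and lift through the filtration'' is plausible heuristics but is not an argument; in particular it is unclear how $F$-smoothness alone would give surjectivity of symbols on $H^i$ without the Bloch--Kato input.
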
 

In particular, $\mathbb{Z}/p^n(i)_X$ is obtained by modifying the truncated
$p$-adic nearby cycles $\tau^{\leq i} Rj_* ( \mu_{p^n}^{\otimes i})$ in the top
cohomological degree by taking the image of $(\mathcal{O}_X^{\times})^{\otimes
i}$: one has a fibre square
\[ \xymatrix{ \mathbb{Z}/p^n(i)_X \ar[r] \ar[d] & \tau^{\leq i} Rj_* ( \mu_{p^n}^{\otimes i}) \ar[d] \\
\mathrm{image}\left( (\mathcal{O}_X^{\times})^{\otimes i} \to R^i j_* (\mu_{p^n}^{\otimes i})\right)[-i] \ar[r] & R^i j_*( \mu_{p^n}^{\otimes i})[-i]  }\]
in $\mathcal{D}(X_{\mathrm{et}})$. On schemes which are smooth or regular with semistable reduction over a DVR, 
\Cref{Zpiregularschemes} identifies  $\mathbb{Z}/p^n(i)_X$  with the ``$p$-adic \'etale Tate twists'' considered in \cite{Sato},
and earlier in the smooth case in \cite{Gei04, Schneider}; cf.~\cite{Sato05}
for a survey.

Many special cases of \Cref{Zpiregularschemes} have previously appeared in the
literature. As above, the $\mathbb{Z}/p^n(i)_X$ always restrict to the usual Tate twists on
$X[1/p]$, so the main task is to identify $i^* \mathbb{Z}/p^n(i)_X$ for $i: X/p
\subset X$, or
equivalently the complexes defined in \cite{BMS2}. 
In low weights or up to isogeny (i.e., using the approach of \cite{FM87, Kato}), 
comparisons between syntomic
cohomology and $p$-adic vanishing cycles  
have been proved in a
variety of settings, including smooth and semistable schemes over a DVR or its
absolute integral closure, in \cite{Kurihara, Kato, Tsuji, CN17}. 
\Cref{Zpiregularschemes} integrally in all weights for smooth
$\mathcal{O}_C$-algebras, 
 for $C$ an algebraically closed
complete nonarchimedean field of mixed characteristic $(0, p)$, is
proved in \cite[Sec.~10]{BMS2} (see also \cite{CDN21} for a semistable analog). 

\Cref{Zpiregularschemes} is also closely related (via \cite{BMS2}) to the calculations of
topological cyclic homology for smooth algebras over the ring of integers in a
$p$-adic field, cf.~\cite{HM03, HM04, GH06}, and the recent revisiting in
\cite{LW21}. We do
not calculate the topological cyclic homology but rather its associated graded
terms, and the methods are at least superficially different; it would be interesting to make a
direct connection.\footnote{The Segal conjecture at the level of topological
Hochschild homology, which is closely related to the condition of
$F$-smoothness,  is often used in these calculations.} 

Our proof of \Cref{Zpiregularschemes} is based on some calculations in prismatic
cohomology. In particular, it is based on the \'etale comparison theorem
(cf.~\cite[Th.~9.1]{Prisms}, \cite[Th.~8.5.1]{BhattLurieAPC}, and \Cref{etcompthm}
below), which
states that for any scheme $X$, one can always recover the Tate twists
$\mu_{p}^{\otimes i}$ on the generic fiber by inverting a suitable class $v_1
\in H^0( \mathbb{F}_p(p-1)(\mathbb{Z}))$ in the syntomic cohomology of $X$. One can identify the image of the
class $v_1$ in the prismatic cohomology of $\mathbb{Z}_p$, after which the
result follows from a linear algebraic argument.

\subsection*{Conventions}
Throughout, we use the theory of (absolute) prismatic cohomology
as developed in \cite{BhattLurieAPC, BMS2, Drinfeld, Prisms}. 

We will simply write $\hat{R}$ for the $p$-adic completion if there is no potential for confusion.
If $R$ is $p$-complete, we write $R\left \langle t\right\rangle$ for the
$p$-completed polynomial ring and $R\left \langle t^{1/p^\infty}\right\rangle$
for the $p$-completion  of $R[t^{1/p^\infty}]$. 

For an animated ring $R$, we let $\mathcal{D}(R)$ denote the $\infty$-category
of $R$-modules (i.e., if $R$ is an ordinary ring, $\mathcal{D}(R)$ is the
derived $\infty$-category of $R$). 

Given an object $M \in \mathcal{D}(R)$ and an element $x \in R$, we will write
$M/x$ or $\frac{M}{x}$ for the mapping cone of $x: M \to M$. In particular, even
when $M$ is a discrete $R$-module, the object $M/x$ need not live in degree $0$.

\subsection*{Acknowledgments}
 Dustin Clausen was involved in earlier stages of this project, and we thank him heartily for many helpful discussions and insights. We also thank Benjamin Antieau, K\k{e}stutis \v{C}esnavi\v{c}ius, Jeremy Hahn, Lars Hesselholt, Jacob Lurie, Wies\l awa Nizio\l, Matthew Morrow,  Peter Scholze, and Dylan Wilson for helpful discussions. In particular, we learned the idea that there should be a mixed characteristic analog of Cartier smoothness from Morrow, and some of the arguments used by Hahn--Wilson in \cite{HW21} inspired some of ours used here. 
 Finally, we thank the referee for many helpful comments and corrections on an
 earlier version of this paper. 

This work was done while the first author was partially supported by the NSF
(\#1801689, \#1952399, \#1840234), the Packard Foundation, and the Simons
Foundation (\#622511), and while the second author was a Clay Research Fellow
and partially supported by the NSF (\#2152235). 

\section{Some calculations in prismatic cohomology}
\label{sec:calculations}

In this section, we recall some basic calculations 
in absolute prismatic cohomology. 
Our goal is to name some elements $v_1, \widetilde{\theta}, \theta$ in the
prismatic cohomology of $\mathbb{Z}_p$, which will play a basic role in the
sequel. 

\subsection{Prismatic sheaves}

Let us first 
recall the construction of the prismatic sheaves, after \cite{BhattLurieAPC,
Prisms}; their Nygaard completion was first constructed in \cite{BMS2}.

Following \cite[Sec.~4]{BMS2}, we use the \emph{quasisyntomic site}
$\qsyn_{\mathbb{Z}_p}$. An object of $\qsyn_{\mathbb{Z}_p}$ is a
$p$-complete, $p$-torsionfree
ring $A$ such that $L_{A/\mathbb{Z}_p} \otimes_A^{\mathbb{L}} (A/p) \in
\mathcal{D}(A/p)$ has $\mathrm{Tor}$-amplitude in $[-1, 0]$. 
There is a basis 
$\qrsp_{\mathbb{Z}_p} \subset \qsyn_{\mathbb{Z}_p}$ of $p$-torsionfree
\emph{quasiregular semiperfectoid rings}, i.e., those objects in
$\qsyn_{\mathbb{Z}_p}$ which admit a surjection
from a perfectoid ring. 

\begin{construction}[Prismatic sheaves] 
Let $R \in \qrsp_{\mathbb{Z}_p}$ be a $p$-torsionfree quasiregular semiperfectoid ring. Then we have naturally
associated to $R$ the following: 
\begin{enumerate}
\item A prism $( \Prism_R, \phi, I)$ together with a map $R \to \Prism_R/I$ (which is
in fact the initial prism with this structure). We write $\Prismbar_R =
\Prism_R/I$ and call it the Hodge--Tate cohomology. 
\item An invertible $\Prism_R$-module $\Prism_R\left\{1\right\}$ 
with a natural $\phi$-linear map $\phi_1 : \Prism_R\left\{1\right\}
\to I^{-1} \Prism_R\left\{1\right\}$ whose $\phi$-linearization is an
isomorphism; the reduction $\Prismbar_R\left\{1\right\}$ is identified with
$I/I^2$. 
We let $\Prism_R\left\{n\right\} = \Prism_R\left\{1\right\}^{\otimes n}$ and
obtain
$\phi_n: \Prism_R\left\{n\right\} \to I^{-n} \Prism_R\left\{n\right\}$. 
\item A descending, multiplicative Nygaard filtration $\left\{\mathcal{N}^{\geq
i } \Prism_R\right\}$ on the ring $\Prism_R $ given by $\mathcal{N}^{\geq i} \Prism_R = \phi^{-1}(I^i \Prism_R)$; we write
$\mathcal{N}^i \Prism_R = \mathrm{gr}^i (\mathcal{N}^{\geq \ast} \Prism_R)$. 
\item 
A map of graded rings $\bigoplus_{i \geq 0} \mathcal{N}^i \Prism_R \to
\bigoplus_{i \in \mathbb{Z}} (I/I^2)^{\otimes i} = \bigoplus_{i \in \mathbb{Z}}
\Prismbar_R\left\{i\right\}$, obtained by passing to associated graded terms of
the map of filtered rings 
$\phi: \left\{\mathcal{N}^{\geq \ast} \Prism_R\right\} \to \left\{I^{\ast}
\Prism_R\right\}$. 
\item The prismatic logarithm 
$\log_{\Prism}: T_p(R^{\times}) \to \Prism_R\left\{1\right\}$, whose image consists precisely
of those elements $y \in \Prism_R\left\{1\right\}$ such that $\phi_1(y) =y$. 
\end{enumerate}

All of the above define sheaves of $p$-torsionfree, $p$-complete abelian groups
with trivial higher cohomology on 
$\qrsp_{\mathbb{Z}_p}$; by descent, one obtains
$\Dh(\mathbb{Z}_p)$-valued sheaves on $\qsyn_{\mathbb{Z}_p}$ with the
same notation. 
Moreover, we will also need to consider the prismatic complexes 
for arbitrary animated rings; these can be defined starting from the above using
animation (compare
\cite[Sec.~4.5]{BhattLurieAPC}). 
\end{construction}

\begin{construction}[Syntomic sheaves] 
One has also, for each $i \geq 0$,  the $\mathcal{D}(\mathbb{Z}_p)^{\geq
0}$-valued sheaf of abelian groups $\mathbb{Z}_p(i)(-)$ on
$\qrsp_{\mathbb{Z}_p}$ which carries $R$ to the fiber of 
$\mathrm{can}- \phi_i: \mathcal{N}^{\geq i} \Prism_R\left\{i\right\} \to
\Prism_R\left\{i\right\}$ for $\mathrm{can}$ the inclusion map, as
originally introduced in \cite{BMS2}. By \cite[Th.~14.1]{Prisms}, there is a
basis for $\qrsp_{\mathbb{Z}_p}$ on which the $\mathbb{Z}_p(i)(-)$ are discrete. 

By animation, one extends the $\mathbb{Z}_p(i)(-)$ by animation to all $p$-complete animated rings. 
In \cite[Sec.~8]{BhattLurieAPC}, the syntomic sheaves $\mathbb{Z}_p(i)(-)$ are extended to all
animated rings, and by Zariski descent to all schemes, by gluing the above
construction on the $p$-completion and the usual Tate twists on the generic
fiber. On $p$-quasisyntomic rings, the $\mathbb{Z}_p(i)(-)$ are concentrated in
nonnegative degrees. 
\end{construction}

\begin{example}[The case of $\zpcycl$] 
\label{caseofzpcycl}
In the particular case where $R =
\zpcycl \stackrel{\mathrm{def}}{=}\widehat{\mathbb{Z}_p[\zeta_{p^\infty}]}$,
then we have an identification $\Prism_{R} =
\widehat{\mathbb{Z}_p[q^{1/p^\infty}]}_{(p, q-1)}$, $I = [p]_q :=
\frac{q^p-1}{q-1}$. In this case, the choice of $p$-power roots $(1, \zeta_p,
\zeta_{p^2}, \dots )$ determines an element $\epsilon \in T_p(R^{\times})$ such
that $\log_{\Prism}(\epsilon) \in \Prism_R\left\{1\right\}$ is divisible by
$(q-1)$, and such that $\frac{\log_{\Prism}(\epsilon)}{q-1}$ is a generator for
the module $\Prism_R\left\{1\right\}$, cf.~\cite[Sec.~2.6]{BhattLurieAPC}. 
\end{example}

\begin{construction}[The Hodge--Tate cohomology of $\mathbb{Z}_p$] 
\label{HTofZp}
Let us recall the calculation of the Hodge--Tate cohomology of $\mathbb{Z}_p$.
In fact, we have an isomorphism of bigraded $\mathbb{F}_p$-algebras,
\[ H^*\left(  \frac{\Prismbar_{\mathbb{Z}_p}}{p} \left\{\ast\right\}\right)
\simeq E( \alpha) \otimes
P(\theta^{\pm 1})  \]
where $|\alpha| = (1, p)$ and $\theta = (0, p)$ (we write the cohomological
grading first and the internal grading next). 
In fact, this follows from the treatment in \cite[Sec.~3]{BhattLurieAPC}. 
The Hodge--Tate cohomology of $\mathbb{Z}_p$ is given by the coherent cohomology
of the sheaves $\mathcal{O}_{\wcartHT}\left\{i\right\}$
on the stack $\wcartHT \simeq B \mathbb{G}_m^{\sharp}$.  
As in \emph{loc.~cit.}, $p$-torsion sheaves on $B \mathbb{G}_m^{\sharp}$ are
simply $\mathbb{F}_p$-vector spaces $V$ equipped with an endomorphism $\Theta: V
\to V$ such that the generalized eigenvalues of $\Theta $ live in $\mathbb{F}_p
\subset \overline{\mathbb{F}_p}$, and $\mathcal{O}_{\wcartHT}\left\{i\right\}$
corresponds to the endomorphism $i: \mathbb{F}_p \to \mathbb{F}_p$. With this
identification in mind, the calculation follows. 

Using \cite[Prop.~5.7.9]{BhattLurieAPC}, we also find 
\[ H^*\left( \bigoplus_{i \geq 0} \frac{\mathcal{N}^i \Prism_{\mathbb{Z}_p}}{p}
\right) \simeq
E(\alpha) \otimes P(\theta)  \]
such that the natural map 
$\bigoplus_{i \geq 0} \frac{\mathcal{N}^i \Prism_{\mathbb{Z}_p}}{p} \to \bigoplus_{i
\in \mathbb{Z} } \frac{\Prismbar_{\mathbb{Z}_p}}{p}\left\{i\right\}$
on cohomology carries $\alpha \mapsto \alpha, \theta \mapsto
\theta$.\footnote{Under the motivic filtrations of \cite{BMS2}, this calculation is also closely
related to B\"okstedt's calculation of $\THH_*(\mathbb{Z})$.}
\end{construction}

\begin{example}
Let $R$ be a $p$-torsionfree perfectoid ring. 
We have  $R \xrightarrow{\sim} \Prismbar_R$, so one forms the Breuil--Kisin
twists $R\left\{i\right\}$. 
The map 
$\bigoplus_{i \geq 0} \mathcal{N}^i \Prism_R \to \bigoplus_{i \in \mathbb{Z}}
\Prismbar_R\left\{i\right\}$ is identified with the inclusion map 
$\bigoplus_{i \geq 0} R\left\{i\right\} \to \bigoplus_{i \in \mathbb{Z}}
R\left\{i\right\}$. 
Under these identifications, 
$\theta$ maps to a generator of $\mathcal{N}^p \frac{\Prism_R}{p}$; in fact,
this is evident because $\theta$ is a unit in the Hodge--Tate
cohomology. \end{example}

\begin{proposition} 
\label{invertingtheta}
Let $A$ be any animated ring. Then the map of graded $E_\infty$-rings over
$\mathbb{F}_p$,
\[ \bigoplus_{i \geq 0} \mathcal{N}^i \frac{\Prism_A}{p} \to \bigoplus_{i \in
\mathbb{Z}} \frac{\Prismbar_A\left\{i\right\}}{p}  \]
exhibits the target as the localization of the source at the element $\theta$. 
\end{proposition}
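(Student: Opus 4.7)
The strategy is quasisyntomic descent followed by an explicit calculation in the perfectoid case. Both $\bigoplus_{i \geq 0} \mathcal{N}^i \Prism_A/p$ and $\bigoplus_{i \in \mathbb{Z}} \Prismbar_A\{i\}/p$ are functorial in the animated ring $A$ and, being defined via animation from sheaves on $\qsyn_{\mathbb{Z}_p}$, satisfy quasisyntomic descent. The class $\theta$ is pulled back from $\mathbb{Z}_p$ and hence natural in $A$. Since $\theta$-localization is the filtered colimit along multiplication by $\theta$, the presheaf $A \mapsto \bigl(\bigoplus_{i \geq 0} \mathcal{N}^i \Prism_A/p\bigr)[\theta^{-1}]$ is again a quasisyntomic sheaf, so to check the comparison is an equivalence it suffices to work on a basis. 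This reduces us to $A \in \qrsp_{\mathbb{Z}_p}$, where both sides are concentrated in cohomological degree $0$.

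One then reduces further to the perfectoid case. For $A \in \qrsp_{\mathbb{Z}_p}$ equipped with a surjection $R \twoheadrightarrow A$ from a perfectoid $R$, the prismatic cohomology $\Prism_A$ admits an explicit description as a prismatic envelope of $\Prism_R$ along this surjection, compatible with Nygaard filtrations and Breuil--Kisin twists. Tracking the Nygaard graded pieces and Hodge--Tate twists modulo $p$ through this construction reduces the claim to the analogous statement for $R$. For perfectoid $R$, the preceding example identifies $\mathcal{N}^i \Prism_R/p \cong R\{i\}$ for $i \geq 0$, $\Prismbar_R\{i\}/p \cong R\{i\}$ for all $i \in \mathbb{Z}$, and the comparison as the inclusion $\bigoplus_{i \geq 0} R\{i\} \hookrightarrow \bigoplus_{i \in \mathbb{Z}} R\{i\}$, with $\theta$ mapping to a generator of the invertible module $R\{p\}$; inverting $\theta$ on the source therefore hits every twist $R\{j\}$ and yields the target.

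The main obstacle is the middle step. The surjection $R \twoheadrightarrow A$ from a perfectoid is not a quasisyntomic cover, so generic descent does not directly apply, and one must use the explicit envelope description of $\Prism_A$ together with the $\delta$-ring/Frobenius structure to control the Nygaard filtration and its interaction with $\theta$. An alternative route which largely avoids this step is to verify intrinsically for any QRSP $A$ that $\theta$ acts invertibly on every $\Prismbar_A\{i\}/p$, so that the comparison factors through the $\theta$-localization of the source, and then to check that the kernel and cokernel of $\mathcal{N}^i \Prism_A/p \to \Prismbar_A\{i\}/p$ are $\theta$-torsion in each graded degree; this latter step can be analyzed directly using the definition $\mathcal{N}^{\geq i}\Prism_A = \phi^{-1}(I^i \Prism_A)$.
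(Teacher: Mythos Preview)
Your proposal has two genuine gaps, and the paper takes a rather different (and simpler) route.

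First, your reduction to $\qrsp_{\mathbb{Z}_p}$ rests on the claim that the presheaf $A \mapsto \bigl(\bigoplus_{i \geq 0} \mathcal{N}^i \Prism_A/p\bigr)[\theta^{-1}]$ is a quasisyntomic sheaf ``since $\theta$-localization is a filtered colimit.'' Filtered colimits of sheaves are not sheaves on the quasisyntomic site: descent is a totalization over a cosimplicial diagram, and totalizations do not commute with filtered colimits in general. So this step needs a separate argument (e.g.\ a uniform bound on how much $\theta$-power is needed), which you do not supply. Second, your middle step---reducing from a general QRSP ring to a perfectoid via the prismatic envelope description---is where the actual content would be, and you do not carry it out; the ``alternative route'' at the end is likewise only a sketch.

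The paper avoids both difficulties. Rather than descending to QRSP rings, it uses left Kan extension (animation) together with quasisyntomic descent to reduce to the case where $A$ is \emph{smooth over a $p$-torsionfree perfectoid ring}. This puts one in the relative prismatic setting, where the Breuil--Kisin twists trivialize and one has the key identification (\cite[Th.~12.2]{Prisms}) of $\phi_i : \mathcal{N}^i \Prism_A \to \Prismbar_A\{i\}$ with the inclusion of the $i$th stage of the conjugate filtration on $\Prismbar_A$. The Hodge--Tate comparison (\cite[Th.~4.11]{Prisms}) makes this filtration exhaustive; since $\theta$ maps to a unit in $\Prismbar_A/p$, inverting $\theta$ on the graded source simply passes to the colimit of the conjugate filtration, yielding $\bigoplus_i \Prismbar_A\{i\}/p$. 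No envelope computations for QRSP rings are needed. If you want to salvage your descent-to-QRSP strategy, the honest fix is to establish a uniform $\theta$-nilpotence bound on the fibre of the comparison along the \v{C}ech nerve---but for an arbitrary QRSP cover there is no such bound, which is exactly why the paper instead passes through smooth algebras of bounded relative dimension over a perfectoid.
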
 
\begin{proof} 
By quasisyntomic descent and left Kan extension, it suffices to treat the case
where $A$ is a smooth algebra over a $p$-torsionfree perfectoid ring, so that
one is in the setting of relative prismatic cohomology \cite{Prisms}. 
In this case, one can trivialize the Breuil--Kisin twists, and one knows that the map 
$\phi_i : \mathcal{N}^i \Prism_A \to \Prismbar_A\left\{i\right\}$ is the $i$th
stage of the conjugate filtration on the Hodge--Tate cohomology $\Prismbar_A
\simeq \Prismbar_A\left\{i\right\}$, cf.~\cite[Th.~12.2]{Prisms}. Since the conjugate filtration is
exhaustive and since $\theta$ maps to a unit in the target, the result easily
follows from the Hodge--Tate comparison \cite[Th.~4.11]{Prisms}. 
\end{proof}

\subsection{The elements $v_1$, $\widetilde{\theta}$}
In this subsection we construct two further elements in the prismatic cohomology
of $\mathbb{Z}$. 
\begin{construction}[The class $v_1$] 
\label{constructionofv1}
We define a class $v_1 \in H^0( \mathbb{F}_p(p-1)(\mathbb{Z}))$ as follows. 

Let $R$ be the ring $\mathbb{Z}[\zeta_{p^\infty}]$. 
Then by flat descent \cite[Prop.~8.4.6]{BhattLurieAPC}, 
$H^0( \mathbb{F}_p(p-1)(\mathbb{Z}))$ is the equalizer of the two maps
\begin{equation} \label{twomapsR} H^0( \mathbb{F}_p(p-1)(R)) \rightrightarrows H^0( \mathbb{F}_p(p-1)(R
\otimes R)).\end{equation}
The element $(1,
\zeta_p, \zeta_{p^2}, \dots ) \in T_p( R^{\times})$ 
determines a class $ \epsilon \in H^0( \mathbb{Z}_p(1)(R))$ via
the identification of
\cite[Prop.~8.4.14]{BhattLurieAPC}. 
We claim that the image of 
$\epsilon^{p-1} \in H^0( \mathbb{F}_p(p-1)(R))$ belongs to the equalizer of the
two maps \eqref{twomapsR}. 

To see this, it suffices to map $R \otimes R$ to both its $p$-adic
completion and to $R \otimes R[1/p]$. The images of
$\epsilon^{p-1}$ in the latter are identical, as one sees using the trivialization of
the sheaf
$\mu_p^{ \otimes p-1}$
on $\mathbb{Z}[1/p]$-algebras. 
Thus, it suffices to calculate in 
$\mathbb{F}_p(p-1) ( \widehat{R \otimes R})$. 
Equivalently, we may do this calculation in $\Prism_{R \otimes R}/p\left\{p-1\right\}$. 
By construction, the two images of $\epsilon$ yields classes
$\epsilon_1, \epsilon_2 \in T_p\left((\widehat{R \otimes
R})^{\times}\right)$. 
The images under the prismatic logarithm mod $p$ yield elements 
$$\log_{\Prism}(\epsilon_1), \log_{\Prism}(\epsilon_2) \in  \Prism_{R \otimes
R}\left\{1\right\}/p.$$
As in \Cref{caseofzpcycl}, $\Prism_R$ is
canonically identified with $\widehat{\mathbb{Z}_p[q^{1/p^\infty}]}_{(p, q-1)}$. 
Let $q_1, q_2 \in \Prism_{R \otimes R}$ denote the images of $q$
under the two maps $\Prism_{R} \rightrightarrows \Prism_{R \otimes
R}$. 

Since the maps are $(p,I)$-completely flat, the elements $(q_1-1), (q_2 - 1) \in \Prism_{R \otimes
R}/p$ are nonzerodivisors, by the conjugate filtration and the Hodge--Tate
comparison \cite[Th.~4.11]{Prisms}. 
To see that $\log_{\Prism}( \epsilon_1)^{p-1} =\log_{\Prism}( \epsilon_2)^{p-1}
 \in \Prism_{R \otimes R}\left\{p-1\right\}/p$, we may thus invert $(q_1 -
 1)(q_2-1)$, after which both 
 $\log_{\Prism}(\epsilon_1)$ and $\log_\Prism(\epsilon_2)$ become generators of 
 the invertible $\Prism_{R \otimes R}/p[\frac{1}{(q_1-1)(q_2-1)}]$-module 
 $\Prism_{R \otimes R}\left\{1\right\}/p[\frac{1}{(q_1-1)(q_2-1)}]$. But then there exists a unit $x \in \Prism_{R \otimes R}/p [ \frac{1}{(q_1-1)(q_2-1)}]$ with $x \log_{\Prism}(\epsilon_1) = \log_{\Prism}(\epsilon_2)$. 
Since $\log_{\Prism}(\epsilon_i), i = 1, 2$ are fixed points of the divided
Frobenius $\phi_1$, we find that $\phi(x) = x$, or $x^{p} = x$. Since $x$ is a unit, this gives  $x^{p-1} =1$, so $\log_\Prism(\epsilon_1)^{p-1} = \log_{\Prism}(\epsilon_2)^{p-1}$ in $\Prism_{R \otimes
R}/p\left\{p-1\right\}[\frac{1}{(q_1-1)(q_2-1)}]$, as desired. 
\end{construction} 

The class $v_1 \in H^0( \mathbb{F}_p(p-1)(\mathbb{Z}_p))$ also appears (in a different language) in
\cite[Prop.~8.11.2]{Drinfeld}.

Although it will not play a role in the sequel, let us remark on the connection
to the element $v_1$ in stable homotopy theory. 
Suppose $p>2$ for simplicity. 
The topological class $v_1^{\mathrm{top}} \in \pi_{2p-2}(\mathbb{S}/p)$ in
the stable stems gives a nonzero class in $\pi_{2p-2}
\mathrm{TC}(\mathbb{Z}_p; \mathbb{F}_p)$; under the motivic spectral sequence of 
\cite{BMS2}, this is detected (up to nonzero scalar) by the class denoted $v_1$
above. 
In fact, we can check this after passage from $\mathbb{Z}_p$ to $
\mathcal{O}_{\mathbb{C}_p}$; 
then, the description $ku/p=\mathrm{TC}(\mathcal{O}_{\mathbb{C}_p}; \mathbb{F}_p)$ 
(cf.~\cite{HN19} for an account)
easily implies the claim.

\begin{construction}[The element $\widetilde{\theta}$] 
The element $v_1 \in H^0( \mathbb{F}_p(p-1)(\mathbb{Z}))$ maps to 
$H^0\left( \mathcal{N}^{\geq p-1} \Prismp{\mathbb{Z}_p}{p-1} \right)$. In fact, 
since $\mathcal{N}^{p-1} \Prism_{\mathbb{Z}_p} = 0$ (\Cref{HTofZp}), we obtain a unique
lift
to an element 
$\widetilde{\theta} \in H^0\left( \mathcal{N}^{\geq p}
\Prismp{\mathbb{Z}_p}{p-1}\right)$. 
\end{construction} 

\begin{proposition} 
\label{widetildethetaimage}
The image of $\widetilde{\theta}$ in $H^0 ( \mathcal{N}^p
\Prism_{\mathbb{Z}_p}/p)$ is a generator (which, up to normalization, we can
take to be $\theta$). 
\end{proposition}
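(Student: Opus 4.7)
The plan is to base-change along $\mathbb{Z}_p \to R := \zpcycl$ in order to compute the image of $\widetilde\theta$ explicitly in perfectoid coordinates, and then descend back to $\mathbb{Z}_p$ using the one-dimensionality of $H^0(\mathcal{N}^p \Prism_{\mathbb{Z}_p}/p)$ supplied by \Cref{HTofZp}.

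By functoriality, the image $\widetilde\theta_R$ of $\widetilde\theta$ in $H^0(\mathcal{N}^{\geq p}\Prism_R/p\{p-1\})$ lifts $\log_{\Prism}(\epsilon)^{p-1} \in \mathcal{N}^{\geq p-1}\Prism_R/p\{p-1\}$. Using \Cref{caseofzpcycl} and the generator $u_1 := \log_{\Prism}(\epsilon)/(q-1)$ of $\Prism_R\{1\}$, one has $\log_{\Prism}(\epsilon)^{p-1} = (q-1)^{p-1} u_1^{p-1}$, and the mod-$p$ congruence $(q-1)^{p-1} \equiv [p]_q$ (immediate from $(q-1)^p \equiv q^p - 1$) rewrites this as $[p]_q u_1^{p-1}$. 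The computation
\[ \phi([p]_q) \;=\; [p]_{q^p} \;\equiv\; (q-1)^{p(p-1)} \;\equiv\; [p]_q^p \pmod{p} \]
shows that $[p]_q \in \mathcal{N}^{\geq p}\Prism_R/p$, so $[p]_q u_1^{p-1}$ already lives in $\mathcal{N}^{\geq p}\Prism_R/p\{p-1\}$. Since $[p]_q$ is a nonzerodivisor in $\Prism_R/p$, the inclusion $\mathcal{N}^{\geq p}\Prism_R/p \hookrightarrow \mathcal{N}^{\geq p-1}\Prism_R/p$ is injective in degree $0$, which together with the uniqueness of the lift over $\mathbb{Z}_p$ (cf.~\Cref{HTofZp}) forces $\widetilde\theta_R = [p]_q u_1^{p-1}$.

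Next I would check that the class of $[p]_q u_1^{p-1}$ in $\mathcal{N}^p \Prism_R/p\{p-1\}$ is a generator. Since $R$ is perfectoid and $\phi$ is an isomorphism on $\Prism_R$, $\mathcal{N}^{\geq n}\Prism_R = \widetilde d{}^n \Prism_R$ where $\widetilde d := \phi^{-1}([p]_q) = [p]_{q^{1/p}}$, and modulo $p$,
\[ [p]_q \;=\; (q-1)^{p-1} \;=\; (q^{1/p} - 1)^{p(p-1)} \;=\; [p]_{q^{1/p}}^{\,p} \;=\; \widetilde d{}^{\,p}, \]
using $(q^{1/p}-1)^p \equiv q - 1 \pmod{p}$. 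Therefore $[p]_q u_1^{p-1}$ generates the free rank-one $(R/p)$-module $\mathcal{N}^p \Prism_R/p\{p-1\}$.

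To conclude, by \Cref{HTofZp} we have $H^0(\mathcal{N}^p\Prism_{\mathbb{Z}_p}/p) = \mathbb{F}_p \cdot \theta$; functoriality combined with the remark preceding \Cref{invertingtheta} that $\theta$ becomes a unit after passing to Hodge--Tate cohomology forces $\theta$ to map to a generator on the $R$-side as well, so the induced map $H^0(\mathcal{N}^p\Prism_{\mathbb{Z}_p}/p) \to H^0(\mathcal{N}^p\Prism_R/p\{p-1\})$ is injective; combined with the previous paragraph, this identifies the image of $\widetilde\theta$ in $H^0(\mathcal{N}^p\Prism_{\mathbb{Z}_p}/p)$ as a nonzero multiple of $\theta$. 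The main obstacle is keeping the Breuil--Kisin twists straight together with the two perfectoid coordinates $q$ and $q^{1/p}$, but the argument ultimately rests on the elementary mod-$p$ identities $(q-1)^{p-1} \equiv [p]_q$ and $(q^{1/p}-1)^p \equiv q-1$.
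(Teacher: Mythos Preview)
Your proof is correct and follows essentially the same route as the paper: reduce to nonvanishing using the one-dimensionality of $H^0(\mathcal{N}^p\Prism_{\mathbb{Z}_p}/p)$ from \Cref{HTofZp}, pass to $\zpcycl$, write $v_1$ as $(q-1)^{p-1}$ times a generator of the Breuil--Kisin twist, and then use the mod-$p$ identities $(q-1)^{p-1}\equiv [p]_q$ and $[p]_{q^{1/p}}\equiv (q^{1/p}-1)^{p-1}$ together with the description $\mathcal{N}^{\geq n}\Prism_R = [p]_{q^{1/p}}^{\,n}\Prism_R$ to see that $v_1$ lands exactly in Nygaard level $p$. The paper compresses this into three lines; your version spells out the uniqueness of the lift over $R$ and the injectivity of $H^0(\mathcal{N}^p\Prism_{\mathbb{Z}_p}/p)\to H^0(\mathcal{N}^p\Prism_R/p)$ explicitly, but the mathematical content is the same.
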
 
\begin{proof} 
It suffices to show that the image of $\widetilde{\theta}$ is nonzero in 
$H^0 ( \mathcal{N}^p
\Prism_{\mathbb{Z}_p}/p) $. 
We may do this calculation in $\mathbb{Z}_p^{\mathrm{cycl}}$. 
Let $\epsilon \in T_p( (\zpcycl)^{\times})$ be the 
canonical element $(1, \zeta_p, \zeta_{p^2}, \dots )$. 
We  have $v_1 = \mathrm{log}_{\Prism}(\epsilon)^{p-1}$, which is $(q-1)^{p-1}
\equiv (q^{1/p} -
1)^{p(p-1)} \ (\mathrm{mod} \ p)$ times a
generator of $\Prism_{\mathbb{Z}_p^{\mathrm{cycl}}}\left\{p-1\right\}/p$. 
Noting that the Nygaard filtration is the filtration by powers of 
$[p]_{q^{1/p}} \equiv (q^{1/p} - 1)^{p-1} \ (\mathrm{mod} \ p ) $, we find that $v_1$ maps to a nonzero
element of $\mathcal{N}^p \Prismp{\zpcycl}{p-1}$, as desired. 
\end{proof}

\begin{remark}[A direct prismatic construction] 
Let us now describe another construction of the  image of $v_1 $ in $ H^0( \mathbb{F}_p(p-1)(\mathbb{Z}_p))$ that does
not rely on the explicit use of the ring $\mathbb{Z}[\zeta_{p^\infty}]$ or the
prismatic logarithm. 
Given any  $p$-torsionfree prism $(A, I, \phi)$ such that $A/I$ is also
$p$-torsionfree, 
we have as in \cite[Sec.~2.2]{BhattLurieAPC} a natural invertible module
$A\left\{1\right\}$ together with a $\phi$-linear
map
$\phi_1: A\left\{1\right\} \to I^{-1} A\left\{1\right\}$ which 
becomes an isomorphism upon $\phi$-linearization. 
We also have the tensor powers $A\left\{i\right\}$ and the maps $\phi_i:
A\left\{i\right\} \to I^{-i} A\left\{i\right\}$. 
Specifying an element of $H^0(
\mathbb{F}_p(p-1)(\mathbb{Z}_p))$ is equivalent to specifying, for each such
prism $(A, I)$, an element of $A/p\left\{p-1\right\}$ which is fixed under
$\phi_{p-1}$. 

Let us construct an element in $I A/p\left\{p-1\right\}$ which is a fixed point
for $\phi_{p-1}: A/p\left\{p-1\right\} \to I^{-(p-1)} A/p$, as follows. 
Choose a generator $y \in A/p\left\{1\right\}$. 
By the above, $\phi_1(y)/y$ is a generator for the invertible $A/p$-module
$I^{-1}/p$, so ``$y/\phi_1(y)$'' is a generator for the ideal $I/p \subset A/p$. 
Now consider the element
$\frac{y}{\phi_1(y)}  y^{p-1} \in I A/p\left\{p-1\right\}$. 
Unwinding the definitions, it follows that 
$\phi_{p-1}$ carries this element to 
$\frac{y^{\otimes p}}{\phi_1(y)^{\otimes p}} \phi_{p-1}(y^{\otimes p-1})  =
\frac{y}{\phi_1(y)} \otimes y^{\otimes p-1}$, i.e., we have a fixed point for
$\phi_{p-1}$. It is easy to check that this does not depend on the choice of
generator $y$ and that it produces a  fixed point for $\phi_{p-1}$ (modulo $p$)
as desired. 
One can check that this construction reproduces the image of $v_1$ in $H^0(
\mathbb{F}_p(p-1)(\mathbb{Z}_p))$ at least up to scalars, by calculating explicitly
for the prism corresponding to the perfectoid ring $\zpcycl$. 
\end{remark} 

\section{The Nygaard filtration on Hodge--Tate cohomology}

In this section, we define the Nygaard filtration on Hodge--Tate cohomology
and study some of its basic properties. 
\subsection{Definitions}
\begin{construction}
\label{NygaardHT}
Let $R \in \qrsp_{\mathbb{Z}_p}$. 
Consider the prism $( \Prism_R, I)$ and the Nygaard filtration 
$\mathcal{N}^{\geq \ast} \Prism_R$. The  image of the Nygaard filtration yields a
filtered ring $\mathcal{N}^{\geq \ast} \Prismbar_R$. 
The ideal $I \subset \Prism_R$ maps via the canonical augmentation $\Prism_R \to
R$ to the ideal $p$ (e.g., by calculating explicitly for $R = \zpcycl$). 
Therefore, we have a canonical isomorphism  of graded rings
\begin{equation} 
\label{grNygaardHT}
 \mathrm{gr}^{\ast} \Prismbar_R \simeq \bigoplus_{i \geq 0} \mathcal{N}^i
\frac{\Prism_R}{p}.  
\end{equation} 
Note here the composite of $R \to \Prismbar_R \to \mathrm{gr}^0 \Prismbar_R \simeq R/p$ is
the Frobenius. 
In particular, if we consider the filtration \eqref{grNygaardHT} as one of
$R$-modules, then 
$\mathrm{gr}^i \Prismbar_R \simeq \mathcal{N}^i \Prism_R/p^{(-1)}$, with the
superscript denoting restriction along Frobenius. We highlight the special case
of an isomorphism of $R$-algebras,
\begin{equation}  \label{gr0prismbar} \mathrm{gr}^0 \Prismbar_R \simeq R/p^{(-1)} , \end{equation}
for $R \in \qrsp_{\mathbb{Z}_p}$, and then by descent and left Kan extension for
all animated rings $R$. 
We can also do the same with the Breuil--Kisin twists
$\Prismbar_R\left\{i\right\}$, which yield invertible
$\mathcal{N}^{\geq \ast}\Prismbar_R$-modules
$\mathcal{N}^{\geq \ast} \Prismbar_R\left\{i\right\}$, 
with associated gradeds the same as above. 

By descent and Kan extension, we construct for any animated ring $A$ the
commutative algebra object 
$\mathcal{N}^{\geq \ast}\Prismbar_A$ of
the filtered derived $\infty$-category. 
\end{construction}

In the remainder of the subsection, we detect the element $p$ in the Nygaard
filtration on Hodge--Tate cohomology, and obtain a twisted form of the
filtration for Hodge--Tate cohomology modulo $p$ which will sometimes be easier
to work with.

\begin{example}[Detection of the element $p$] 
\label{identificationofp}
We show that the element $p \in H^0( \Prismbar_{\mathbb{Z}_p})$ is detected in filtration $p$
of the Nygaard filtration on 
$\Prismbar_{\mathbb{Z}_p}$, by the class $\theta \in H^0 ( \mathcal{N}^p
\frac{\Prism_{\mathbb{Z}_p}}{p})$ (up to units). 

To see this, 
we may replace $\mathbb{Z}_p$ by the perfectoid ring $R =
\widehat{\mathbb{Z}_p[p^{1/p^\infty}]}$, and it suffices to show that $p \in
\mathcal{N}^{\geq p} \Prismbar_R \setminus \mathcal{N}^{\geq p+1} \Prismbar_R$. 
Since $R$ is perfectoid, $\Prism_R = W(R^{\flat})$. 
Let $p^{\flat} \in R^{\flat}$ be given by the system of elements $(p, p^{1/p},
p^{1/p^2}, \dots ) $ in $R$. 
The prismatic ideal $I \subset \Prism_R  = W(R^{\flat})$ is  $I = (p -
[p^{\flat}])$, and the 
map 
$R \to \Prism_R/I$ is an isomorphism whose inverse given by the Fontaine map
$W(R^{\flat}) \to R$ (whose kernel is $I$). 
Now
$\mathcal{N}^{\geq i} \Prism_R = \phi^{-1}(I)^i = (p - [p^{\flat, 1/p}])^i$. 
The image of this ideal in $\Prismbar_R$ is $p^{i/p}$, since $[p^{\flat, 1/p}]$
maps to $p^{1/p}$. The claim now follows. 
\end{example}

\begin{construction}[The twisted Nygaard filtration on $\frac{\Prismbar_R}{p}$]
\label{TwistedNygaard}
Let $R$ be any animated ring. 
Then there is a natural  decreasing, multiplicative $\mathbb{Z}_{\geq 0}^{op}$-indexed filtration
 $\tN^{\geq \ast}\frac{\Prismbar_R}{p}$ on $\frac{\Prismbar_R}{p}$ with associated graded
given as 
\begin{equation} 
\mathrm{gr}^{\ast} \frac{\Prismbar_R}{p}
\simeq \left( \bigoplus_{i \geq 0} \mathcal{N}^i \frac{\Prism_R}{p}
\right)/\theta,
\end{equation} 
where $\theta$ lives in grading $p$. Furthermore, for any $i \in \mathbb{Z}$, we can construct a
similar filtration $\tN^{\geq \ast}\frac{\Prism_R\left\{i\right\}}{p}$, which is a module
over the filtration on $\frac{\Prism_R}{p}$; the associated graded terms are
given individually as
\begin{equation} \label{grTwistedNygaard} \mathrm{gr}^j  \frac{
\Prismbar_R\left\{i\right\}}{p} \simeq \mathrm{cofib}\left( \theta: \mathcal{N}^{j-p}
\frac{\Prism_R}{p} \to \mathcal{N}^j \frac{\Prism_R}{p} \right), \end{equation}
where $\mathcal{N}^{j} \frac{\Prism_R}{p} = 0$ for $j < 0$.
In fact, by descent from $\qrsp_{\mathbb{Z}_p}$ and left Kan extension, these
claims 
follow from 
\Cref{NygaardHT} combined with 
the identification of \Cref{identificationofp}. 
\end{construction}

\begin{remark} 
\label{whentwistedcomplete}
The twisted Nygaard filtration $\tN^{\geq \ast}
\frac{\Prismbar_R\left\{i\right\}}{p}$ is complete if and only if the Nygaard
filtration $\mathcal{N}^{\geq \ast}\Prismbar_R\left\{i\right\}$ is complete, as
follows by $p$-completeness. 

\end{remark}

\subsection{Relative perfectness}
In the sequel we will study how the above filtration varies as $R$ does. 
To begin, for future reference we include here a special case of this result based on
the notion of \emph{relative perfectness}. 

\begin{definition}[Relatively perfect maps] 
Let $A$ be an animated ring, and let $B$ be an animated $A$-algebra.
We say that $B$ is \emph{relatively perfect} over $A$ if the diagram
\[ \xymatrix{
A/p  \ar[d]^{\phi}  \ar[r] & B/p  \ar[d]^{\phi} \\
A/p \ar[r] &  B/p
}\]
is a pushout square of animated rings.
This implies that the cotangent complex $L_{B/A}$ vanishes $p$-adically,
cf.~\cite[Cor.~3.8]{Bhattpadic}, so $L_{A/\mathbb{Z}} \otimes_A B \to
L_{B/\mathbb{Z}}$ is a $p$-adic equivalence. 
\end{definition} 

\begin{remark} 
Suppose $A, B$ are discrete rings and $A \to B$ is $p$-completely flat. Then 
$A \to B$ is relatively perfect in the above sense if and only if the analogous
diagram 
involving the \emph{ordinary} quotients of $A, B$ by $(p)$ is cocartesian. 
In fact, we claim that if $R \to S$ is any flat
map of animated $\mathbb{F}_p$-algebras, then $R \to S$ is relatively perfect in
the animated sense if and only if $\pi_0(R) \to \pi_0(S)$ is relatively perfect in
the classical sense. The ``only if'' direction is clear as applying $\pi_0(-)$ preserves pushout squares. For the reverse implication, observe that $R \to S$ is relatively perfect
in the animated sense exactly when the relative Frobenius $(S/R)^{(1)} := S
\otimes_{R,\varphi} R \to S$ is an isomorphism of animated $R$-algebras. Now
base change along $R \to \pi_0(R)$ is conservative on connective $R$-modules, so
it suffices to check that $(S/R)^{(1)} \otimes_R \pi_0(R) \to S \otimes_R
\pi_0(R)$ is an isomorphism in $\mathcal{D}(\pi_0(R))$. Noting that the formation of the relative Frobenius commutes with arbitrary base change along maps of animated rings, it remains to observe that $\pi_0(R) \to \pi_0(S)$ identifies with the base change $\pi_0(R) \to S \otimes_R^L \pi_0(R)$ of $R \to S$ by the flatness assumption, and that the Frobenius twist of a flat $\pi_0(R)$-algebra is automatically discrete.
\end{remark} 

\begin{proposition} 
\label{relativelyperfectbasechange}
Let $A \to B$ be a relatively perfect map of animated rings. 
Then the natural map induces an equivalence (after $p$-completion) of filtered objects 
$\mathcal{N}^{\geq \ast} \Prismbar_A\left\{i\right\} \otimes_A B \simeq
\mathcal{N}^{\geq \ast} \Prismbar_B\left\{i\right\}$, and similarly for the
twisted Nygaard filtrations on $\frac{\Prismbar_{(-)}\left\{i\right\}}{p}$. 
Moreover, for each $i$, we have a $p$-adic equivalence
$\mathcal{N}^i \Prism_A \otimes_A B \simeq \mathcal{N}^i \Prism_B$. 
\end{proposition}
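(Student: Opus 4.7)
The plan is to reduce the claimed base change to a cotangent complex base change, which is exactly what relative perfectness provides. Since $L_{B/A}$ vanishes after $p$-completion, the transitivity triangle gives a $p$-adic equivalence $L_{A/\mathbb{Z}_p} \otimes_A B \to L_{B/\mathbb{Z}_p}$, and hence a $p$-adic equivalence $\wedge^k L_{A/\mathbb{Z}_p} \otimes_A B \to \wedge^k L_{B/\mathbb{Z}_p}$ for each $k \geq 0$. This is the only property of $A \to B$ that I will use.

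The key step is that the functor $R \mapsto \mathcal{N}^j \Prism_R/p$ on animated rings admits a natural finite exhaustive filtration whose graded pieces are of the form $\wedge^k L_{R/\mathbb{Z}_p}[-k]$, suitably Frobenius twisted, for $0 \leq k \leq j$. Over a perfectoid base, this comes from the conjugate filtration on Hodge--Tate cohomology combined with the identification of $\phi_j$ as realizing $\mathcal{N}^j \Prism_R$ as the $j$-th stage of the conjugate filtration from \cite[Th.~12.2]{Prisms}; the general case then follows by descent from $\qrsp_{\mathbb{Z}_p}$ and left Kan extension from polynomial algebras, as in the proof of Proposition~\ref{invertingtheta}. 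Granted this filtration, the cotangent complex equivalence above yields a $p$-adic equivalence $\mathcal{N}^j \Prism_A/p \otimes_A B \simeq \mathcal{N}^j \Prism_B/p$, which by $p$-completeness upgrades to $\mathcal{N}^j \Prism_A \otimes_A B \simeq \mathcal{N}^j \Prism_B$, giving the third claim.

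For the filtered equivalences on $\Prismbar_{(-)}\{i\}$, I then appeal to the graded pieces identifications \eqref{grNygaardHT} and \eqref{grTwistedNygaard}, which express $\mathrm{gr}^\bullet$ of either the standard or twisted Nygaard filtration on $\Prismbar_R\{i\}/p$ in terms of $\mathcal{N}^\bullet \Prism_R/p$ and, in the twisted case, the element $\theta$. The base change equivalence on graded pieces then propagates to the filtered equivalence, provided that the relevant filtrations are complete after $p$-completion; the twisted version is complete by Remark~\ref{whentwistedcomplete} combined with $p$-completeness of the underlying objects, and this suffices to conclude. The main obstacle I foresee is the careful construction of the cotangent-complex filtration on $\mathcal{N}^j \Prism_R/p$ with the correct Frobenius twist bookkeeping when extending from the perfectoid-base setting where it is classical; this is a matter of organizing the reduction via descent and Kan extension rather than a conceptual issue.
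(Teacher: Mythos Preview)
Your argument for the third claim---the $p$-adic equivalence $\mathcal{N}^i \Prism_A \otimes_A B \simeq \mathcal{N}^i \Prism_B$---is essentially correct and aligns with the paper's approach: both route through the conjugate filtration (equivalently, the Nygaard fiber sequence of \cite[Rem.~5.5.8]{BhattLurieAPC}) whose graded pieces are wedge powers of the cotangent complex, so $p$-adic vanishing of $L_{B/A}$ suffices here.

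However, there is a genuine gap in your passage to the filtered statement on $\Prismbar$. You write that the only property of $A \to B$ you use is the $p$-adic vanishing of $L_{B/A}$, and then invoke \eqref{grNygaardHT} to identify the associated graded of $\mathcal{N}^{\geq \ast}\Prismbar_R$ with $\mathcal{N}^\bullet \Prism_R/p$. But that identification is \emph{not} $R$-linear: as noted immediately after \eqref{grNygaardHT}, one has $\mathrm{gr}^j \Prismbar_R \simeq (\mathcal{N}^j \Prism_R/p)^{(-1)}$, i.e., the $R$-module structure is obtained by restriction along Frobenius. Thus, to pass from $\mathcal{N}^j \Prism_A/p \otimes_A B \simeq \mathcal{N}^j \Prism_B/p$ to $\mathrm{gr}^j \Prismbar_A \otimes_A B \simeq \mathrm{gr}^j \Prismbar_B$, you must know that Frobenius restriction commutes with the base change $-\otimes_{A/p} B/p$, which is exactly the assertion that
\[
\xymatrix{ A/p \ar[r] \ar[d]^{\phi} & B/p \ar[d]^{\phi} \\ A/p \ar[r] & B/p }
\]
is a pushout---the full strength of relative perfectness. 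The paper flags this point explicitly; cotangent complex vanishing alone does not give it.

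There is also a secondary issue: you attempt to deduce the filtered equivalence from the graded pieces via completeness, but the Nygaard filtration on $\Prismbar_R$ is \emph{not} complete in general (indeed, demanding this is part of the definition of $F$-smoothness), and Remark~\ref{whentwistedcomplete} only says the twisted and untwisted completeness conditions are equivalent, not that either holds. The paper avoids this by first establishing the equivalence on the underlying object $\Prismbar_A \otimes_A B \simeq \Prismbar_B$ via the absolute conjugate filtration, and then combining this with the graded-piece equivalence; for a $\mathbb{Z}_{\geq 0}^{op}$-indexed filtration, agreement on the underlying object together with agreement on each associated graded forces the filtered map to be an equivalence, with no completeness hypothesis needed.
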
 
\begin{proof} 
We have that
$\Prismbar_A\left\{i\right\} \otimes_A B \to \Prismbar_B\left\{i\right\}$ is an
equivalence by the $p$-complete vanishing of the cotangent complex, e.g., by
comparing the
absolute conjugate filtrations, \cite[Sec.~4.5]{BhattLurieAPC}. 
This also yields the claim about the Nygaard pieces $\mathcal{N}^i \Prism$,
using the Nygaard fiber sequence
\cite[Rem.~5.5.8]{BhattLurieAPC}. 
Finally, the claim about 
$\mathcal{N}^{\geq \ast} \Prismbar$ now follows from the claims about
$\Prismbar$ and $\mathcal{N}^i \Prism$; note that we need relative perfectness
and not only $p$-adic vanishing of the relative cotangent complex because of the
restrictions along Frobenius involved in 
\eqref{grNygaardHT}. 
\end{proof} 

\subsection{Polynomial rings}
The purpose of this subsection is to identify explicitly the Hodge--Tate
cohomology of a polynomial ring, together with its Nygaard filtration
(\Cref{prismbarpolyHT}). We also treat the easier case of the Nygaard graded pieces
of prismatic cohomology (\Cref{keycofibpolypieces}). 

In the sequel, we use the following. Let $\left\{A^{\geq \ast}\right\}$ be a
filtered ring. Then the $\infty$-category $\mathcal{D}(A^{\geq \ast})$ of
$A^{\geq \ast}$-modules in the filtered derived $\infty$-category admits a
$t$-structure, where (co)connectivity is checked levelwise, and such that the
heart consists of modules over $A^{\geq \ast}$ in the category
$\mathrm{Fun}(\mathbb{Z}^{op}, \mathrm{Ab})$; we will sometimes simply refer to
these as $A^{\geq \ast}$-modules. 

In addition, for future reference, it will be helpful to keep track of the
naturally arising internal gradings that arise, which we first review.

\begin{remark}[Automatic internal gradings] 
\label{autointernalgrading}
Let $\mathcal{F}$ be a functor from $\qrsp_{\mathbb{Z}_p}$ to $p$-complete
abelian groups with (for simplicity) bounded $p$-power torsion. 
Suppose that for any $R \in \qrsp_{\mathbb{Z}_p}$, we are given an $R$-module structure on 
$\mathcal{F}(R)$ which is natural in $R$ in the evident sense. Suppose further
that for any such $R$, the natural map 
$\mathcal{F}(R) \otimes_R R[t^{1/p^\infty}] \to \mathcal{F}( R\left \langle
t^{1/p^\infty}\right\rangle)$ is a $p$-adic equivalence.

Then for any $R' \in
\qrsp_{\mathbb{Z}_p}$ with a $\mathbb{Z}[1/p]_{\geq 0}$-grading (in the
$p$-complete sense),  the $R'$-module
$\mathcal{F}(R')$ also inherits a canonical $\mathbb{Z}[1/p]_{\geq 0}$-grading
for essentially diagrammatic reasons. 
We have a map $\mathrm{coact}: R' \to R'\left \langle t^{1/p^\infty}\right\rangle$ carrying a
homogeneous element $x \in R'_i$ to $x \otimes t^i$. 
An element $y \in \mathcal{F}(R')$ is homogeneous of degree $i
\in  \mathbb{Z}[1/p]_{\geq 0}$ if and only if it maps under $\mathrm{coact}$
to $y \otimes t^{i} \in \mathcal{F}(R'\left \langle t^{1/p^\infty}\right\rangle)
\simeq \mathcal{F}(R')\left \langle t^{1/p^\infty}\right\rangle$. 
\end{remark} 

\begin{construction}[Internal gradings on Hodge--Tate cohomology] 
Let $R$ be a $\mathbb{Z}[1/p]_{\geq 0}$-graded animated ring. 
In this case, the (twisted) Hodge--Tate cohomology together with its Nygaard
filtration 
$\mathcal{N}^{\geq \ast}\Prismbar_R\left\{i\right\}$ naturally inherits the
structure of a $\mathbb{Z}[1/p]_{\geq 0}$-graded object of
$\widehat{\mathcal{D}(\mathbb{Z}_p)}$. 
Explicitly, one uses quasisyntomic descent, animation,
\Cref{autointernalgrading}, and that 
the natural map 
\[ \mathcal{N}^{\geq \ast}\Prismbar_R\left\{i\right\} \otimes_\mathbb{Z}
\mathbb{Z}[t^{1/p^\infty}] \to \mathcal{N}^{\geq \ast }\Prismbar_{R
\otimes_{\mathbb{Z}} \mathbb{Z}[t^{1/p^\infty}]}\left\{i\right\}  \]
is an isomorphism $p$-adically by relative perfectness
(\Cref{relativelyperfectbasechange}).\footnote{In the language of \cite{BhattLurieprismatization}, the Hodge--Tate stack
associated to the
group scheme $\mathbb{G}_m^{\mathrm{perf}} = \varprojlim_p \mathbb{G}_m$ is 
$\mathbb{G}_m^{\mathrm{perf}} \times \wcartHT$ by relative perfectness, so if a scheme $X$ is equipped with a
$\mathbb{G}_m^{\mathrm{perf}}$-action, then so is its Hodge--Tate
stack $\wcartHT_X$.} Similarly, in the above setting, \Cref{autointernalgrading} yields an additional grading
on 
$\mathcal{N}^i \Prism_R, i \geq 0$. 
Since there will be multiple gradings at the same time, we will refer to these
internal gradings as weight gradings. 
\end{construction} 

\begin{remark} 
Let $R$ be a $\mathbb{Z}[1/p]_{\geq 0}$-graded ring. 
If $R$ is concentrated in degrees $\mathbb{Z}_{\geq 0}$, then
$\mathcal{N}^i \Prism_R$ and 
$\Prismbar_R\left\{i\right\}$ are concentrated in degrees $\mathbb{Z}_{\geq 0}$,
as one sees using the conjugate filtration over a perfectoid base.  
However, the associated graded terms of the Nygaard filtration are in degrees
$\frac{1}{p} \mathbb{Z}_{\geq 0}$: this follows from \eqref{grNygaardHT} 
noting that there is a restriction along Frobenius involved, which divides
degrees by $p$. 
\end{remark}

\begin{proposition} 
Let $R$ be a $p$-torsionfree quasiregular semiperfectoid ring. Then there are natural
isomorphisms of graded $A^{\ast} = \bigoplus_{i \geq 0} \widehat{\mathcal{N}^i \Prism_R
\otimes_R R[x]}$-modules
\begin{equation} 
H^j( \bigoplus_{i \geq 0} \mathcal{N}^i \Prism_{R[x]}) \simeq 
\begin{cases} 
 A^{\ast}, \quad j = 0 \\
A^{\ast -1} , \quad j = 1 
 \end{cases} 
\end{equation}
With respect to the internal weight grading with $|x| = 1$ and $R$ in weight
zero, then the generator in
$H^0$ has weight zero and the generator in $H^1$ has weight $1$. 
 \label{keycofibpolypieces}
\end{proposition}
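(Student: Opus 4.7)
The plan is to exploit the $1$-dimensional smoothness of $R[x]/R$ via the Hodge--Tate comparison. Since $L_{R[x]/R} \simeq R[x]\cdot dx$ is free of rank $1$ in degree zero, any Hodge-type cohomology of $R[x]/R$ is concentrated in cohomological degrees $0$ and $1$, which matches the two-term shape of the target and forces $H^j = 0$ for $j \geq 2$ automatically.

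My first step is to reduce to the case when $R$ is perfectoid. Both sides are natural in $R$ and compatible with $p$-adic flat base change along relatively perfect maps (using \Cref{relativelyperfectbasechange} on the left, and the defining formula for $A^*$ on the right), and both satisfy quasisyntomic descent in $R$. Choosing a qsyn cover $R \to R_\infty$ in which $R_\infty$ is a $p$-torsionfree qrsp ring with perfect $\delta$-ring prism (by iteratively adjoining enough $p^\infty$-th roots), and passing to the Čech nerve, reduces the statement to the perfectoid case. When $R = S$ is perfectoid, $S[x]$ is smooth over $S$ in the classical sense, so absolute prismatic cohomology agrees with relative prismatic cohomology over the perfect prism $(\Prism_S, I)$; the relative Hodge--Tate comparison \cite[Th.~4.11]{Prisms} then gives
\[
\Prismbar_{S[x]}\{i\} \;\simeq\; \bigl(\Prismbar_S\{i\}\otimes_S S[x]\bigr) \;\oplus\; \bigl(\Prismbar_S\{i-1\}\otimes_S S[x]\cdot dx\bigr)[-1],
\]
where $dx$ carries internal weight $1$ and the twist shift on the $dx$-piece is forced by the Hodge--Tate comparison. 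Applying \cite[Th.~12.2]{Prisms} (used in \Cref{invertingtheta}), the divided Frobenius $\phi_i$ realizes $\mathcal{N}^i \Prism_{S[x]}$ as the $i$-th stage of the conjugate filtration on $\Prismbar_{S[x]}\{i\}$, and unwinding this using $\mathcal{N}^i \Prism_S \cong I^i$ for $S$ perfectoid yields the claimed cohomology of $\mathcal{N}^i \Prism_{S[x]}$.

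The main obstacle will be the final descent step: one must verify that the two-term cohomological structure, together with its $A^*$-module structure, descends cleanly along the Čech nerve to recover the statement over the original qrsp $R$. The internal weight grading with $|x|=1$ is the crucial tool here --- the $H^0$ and $H^1$ summands live in weights differing by $1$, so there is no nontrivial mixing under descent, and the two pieces remain separated. The most delicate bookkeeping will be tracking Breuil--Kisin twists as one moves between the absolute Nygaard filtration (which involves a Frobenius twist, cf.~\eqref{grNygaardHT}) and the relative conjugate filtration over varying perfectoid bases, ensuring that the identifications in the Čech nerve assemble into the advertised formula.
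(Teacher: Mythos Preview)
Your reduction to the perfectoid case does not work as written. Even if Andr\'e's lemma furnishes a quasisyntomic cover $R \to R_\infty$ with $R_\infty$ perfectoid, the higher terms $R_\infty^{\widehat{\otimes}_R \, n}$ of the \v{C}ech nerve are quasiregular semiperfectoid but typically \emph{not} perfectoid, because the base $R$ is not. So descent leaves you needing the proposition for general qrsp rings---exactly what you are trying to prove. Your appeal to \Cref{relativelyperfectbasechange} does not rescue this: the \v{C}ech nerve terms are not relatively perfect over anything for which you already know the result. You also never name the generators; without a specified natural map from the right-hand side to the left, there is nothing concrete to check by descent even if the nerve terms were tractable.

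The paper's argument runs in the opposite direction and is much shorter. First, one names the generators explicitly and naturally in $R$: the unit in $H^0$, and the class $dx \in H^1(\mathcal{N}^1 \Prism_{R[x]})$ arising from the identification $\mathcal{N}^1 \Prism_S \simeq \widehat{L_{S/\mathbb{Z}}}[-1]$. This produces, for every qrsp $R$, a natural map of graded $A^{\ast}$-modules from the asserted answer into $H^*(\bigoplus_i \mathcal{N}^i \Prism_{R[x]})$. Now one uses base-change \emph{down} rather than descent up: since $R$ is qrsp it receives a surjection from a perfectoid $R_0$, and over $R_0$ the functor $S \mapsto \bigoplus_{i \geq 0} \mathcal{N}^i \Prism_S$ from animated $R_0$-algebras to $p$-complete graded objects is symmetric monoidal. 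As $R[x] \simeq R \otimes_{R_0} R_0[x]$, checking that the named map is an isomorphism reduces to the single case $R = R_0$ perfectoid. Your computation in that case, via \cite[Th.~12.2]{Prisms} identifying $\mathcal{N}^i$ with the conjugate filtration and the Hodge--Tate comparison, is correct and matches the paper.
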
 
\begin{proof} 
The generator in $H^0$ is simply the unit. 
The generator in $H^1(\mathcal{N}^1 \Prism_{R[x]})$ comes from the class
$dx$, via the isomorphism $\mathcal{N}^1 \Prism_S \simeq
\widehat{L_{S/\mathbb{Z}}}[-1]$ for any animated ring $S$,
cf.~\cite[Prop.~5.5.12]{BhattLurieAPC}. 
Having named the classes, it suffices by base-change 
(since for any perfectoid ring $R_0$, the functor $R \mapsto \bigoplus_{i \geq 0} \mathcal{N}^i \Prism_R$ is a symmetric
monoidal functor from animated $R_0$-algebras to $p$-complete graded objects)
to verify the isomorphism 
when $R$ is perfectoid, where the result follows from 
the isomorphisms with the conjugate filtration: for any $R$-algebra $S$ (in
particular, $R[x]$),
$\mathcal{N}^i \Prism_S \simeq \mathrm{Fil}_{\leq i}^{\mathrm{conj}}
\Prismbar_S$ by 
\cite[Th.~12.2]{Prisms}, and using the Hodge--Tate comparison for the latter
\cite[Th.~4.11]{Prisms}. 
\end{proof}

\begin{proposition} 
\label{prismbarpolyHT}
Let $R \in 
\qrsp_{\mathbb{Z}_p}
$ be a $p$-torsionfree quasiregular semiperfectoid ring. 
Let $A^{\geq \ast}$ be the $p$-completion of 
the filtered ring
$\mathcal{N}^{\geq \ast}
\Prismbar_R \otimes_R R[x]$. 
Then there are isomorphisms of $A^{\geq \ast}$-modules
\begin{equation}
H^0( \mathcal{N}^{\geq \ast}\Prismbar_{R[x]}) \simeq A^{\geq \ast}, \quad H^1(
\mathcal{N}^{\geq \ast}\Prismbar_{R[x]}) \simeq A^{\geq \ast-1}\left\{-1\right\} \oplus
\bigoplus_{i=1}^{p-1} A^{\geq \ast-1}/A^{\geq \ast}.
\end{equation}
With respect to the internal weight grading with $|x| = 1$ and $R$ in degree
zero, the generator of
$H^0$ is in weight zero, the generator of $A^{\geq \ast -1}\left\{-1\right\}$ is
in weight one, and the $i$th 
copy of $A^{\geq \ast-1}/A^{\geq \ast}$ has generator in weight $\frac{i}{p}$. 
\end{proposition}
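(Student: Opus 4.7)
The approach is to deduce the filtered structure of $\mathcal{N}^{\geq \ast}\Prismbar_{R[x]}$ from the graded computation of $\bigoplus_i \mathcal{N}^i\Prism_{R[x]}$ in \Cref{keycofibpolypieces}, via the identification $\mathrm{gr}^i\Prismbar_R \simeq \mathcal{N}^i \Prism_R/p^{(-1)}$ from \Cref{NygaardHT}. The main subtlety is that the Frobenius restriction $(-)^{(-1)}$ rescales the internal weight grading by a factor of $1/p$, so that integer-weight classes in the Nygaard pieces of $\Prism_{R[x]}$ produce classes at fractional weights on the Hodge--Tate associated graded; this is the source of the $p-1$ torsion summands at weights $i/p$ in the statement. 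More concretely, under the Frobenius-restricted $R[x]$-action, $x$ acts by $x^p$, so the associated graded $R[x]$-module structure on $\mathrm{gr}^1\Prismbar_{R[x]}$ splits into $p$ orbits indexed by the residues of exponents modulo $p$.

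By quasisyntomic descent and left Kan extension, it suffices to treat a $p$-torsionfree QRSP ring $R$, and one may further reduce to a perfectoid base via descent from a perfectoid cover. Applying \Cref{keycofibpolypieces} modulo $p$ and performing Frobenius restriction computes the associated graded of $\mathcal{N}^{\geq \ast}\Prismbar_{R[x]}$: in $H^0$ the unit $1$ generates in weight $0$, and in $H^1$ there are $p$ generators $x^{i-1}\,dx$ for $i = 1, \ldots, p$ (with $dx$ coming from $\mathcal{N}^1\Prism_S \simeq \widehat{L_{S/\mathbb{Z}}}[-1]$) sitting at fractional weights $i/p$.

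One then lifts these to the filtered object. The unit $1 \in \Prismbar_{R[x]}$, in filtration $0$ and weight $0$, gives the $A^{\geq \ast}$ summand of $H^0$. The integer-weight generator of $H^1$ (at weight $1$, the case $i = p$) lifts to a class in $H^1(\mathcal{N}^{\geq 1}\Prismbar_{R[x]})$ whose $A^{\geq \ast}$-orbit produces the $A^{\geq \ast-1}\{-1\}$ summand of $H^1$. The remaining $p-1$ generators at fractional weights $i/p$ for $i = 1, \ldots, p-1$ are confined to the single Nygaard graded piece $\mathrm{gr}^1\Prismbar_{R[x]}$ with no lift to deeper Nygaard stages; they contribute the $p-1$ torsion summands $A^{\geq \ast-1}/A^{\geq \ast}$. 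Finally, by completeness of the Nygaard filtration on $\Prismbar_{R[x]}$ (see \Cref{whentwistedcomplete}), matching associated gradeds then identifies the filtered module as claimed.

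The main obstacle is to justify that the $p-1$ fractional-weight classes in $\mathrm{gr}^1\Prismbar_{R[x]}$ contribute precisely quotient summands $A^{\geq \ast-1}/A^{\geq \ast}$ rather than free ones. This reduces to showing that these classes admit no lift to higher Nygaard filtration stages, which requires analyzing each $\mathrm{gr}^i\Prismbar_{R[x]}$ for $i \geq 2$ and verifying that the integer-weight generators appearing there are all already captured by the $A^{\geq \ast-1}\{-1\}$-orbit of the lifted integer-weight class, leaving the fractional-weight contributions accounted for only on their single graded piece.
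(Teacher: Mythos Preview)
Your sketch has the right ingredients but leaves the decisive step as an unresolved ``main obstacle,'' so as written it is not a proof. Two concrete gaps:

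\begin{itemize}
\item Your appeal to \Cref{whentwistedcomplete} does not establish Nygaard completeness for $\Prismbar_{R[x]}$; that remark only says completeness of the twisted and untwisted Nygaard filtrations are equivalent, not that either holds. Completeness here comes from $R[x]$ being smooth over a perfectoid ring (after the reduction to $R$ perfectoid), which is an input you must cite separately.
\item More seriously, the passage ``matching associated gradeds then identifies the filtered module as claimed'' hides exactly the point at issue. Knowing $H^*(\mathrm{gr}^j\Prismbar_{R[x]})$ for all $j$ does not determine the filtered object $H^*(\mathcal{N}^{\geq \ast}\Prismbar_{R[x]})$: there are potential differentials and extension problems. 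In particular, showing that the fractional-weight classes contribute $A^{\geq \ast-1}/A^{\geq \ast}$ rather than something free requires proving they are annihilated by $A^{\geq 1}$, which you acknowledge but do not do. Your last paragraph restates the difficulty without resolving it.
\end{itemize}

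The paper's argument is genuinely different in structure and sidesteps these issues. First, it names the fractional-weight generators not via $x^{i-1}dx$ in the Nygaard graded but as boundary classes of $x^{i/p} \in \mathrm{gr}^0\Prismbar_{R[x]} \simeq R/p^{(-1)}[x^{1/p}]$ under the fiber sequence $\mathcal{N}^{\geq 1}\Prismbar_{R[x]} \to \Prismbar_{R[x]} \to \mathrm{gr}^0\Prismbar_{R[x]}$; this makes the annihilation by $A^{\geq 1}$ automatic, since $\mathrm{gr}^0$ is already an $A^{\geq 0}/A^{\geq 1}$-module. Second, after reducing to $R$ perfectoid, the paper makes a further reduction (via Andr\'e's lemma and descent) to the case where $R$ contains a $p$th root of $p$. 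In that situation the Nygaard filtration on $\Prismbar_R \simeq R$ is the $p^{1/p}$-adic filtration, so multiplication by $p^{1/p}$ gives isomorphisms $\mathcal{N}^{\geq i}\Prismbar_{R[x]} \xrightarrow{\sim} \mathcal{N}^{\geq i+1}\Prismbar_{R[x]}$ for $i > 0$. This reduces the entire computation to determining $H^*(\mathcal{N}^{\geq 1}\Prismbar_{R[x]})$ via the single cofiber sequence with $\mathrm{gr}^0$, which is done directly using the Hodge--Tate comparison. No induction over graded pieces, no extension analysis.

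If you want to make your approach work, you would need to produce an explicit map of filtered $A^{\geq \ast}$-modules from the claimed right-hand side to $H^*(\mathcal{N}^{\geq \ast}\Prismbar_{R[x]})$ and then verify it is an isomorphism on associated graded; but constructing that map already requires naming the generators in a way that manifests the $A^{\geq 1}$-annihilation, which brings you back to the paper's boundary-class description.
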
 
\begin{proof} 
Let us first name the generators. The generator of $H^0$ is simply $1$. 
The first generator in $H^1$
is the class $dx \in H^1(  \Prismbar_{R[x]}\left\{1\right\})$
constructed via the boundary map 
$\Prismbar_{R[x]}\left\{1\right\} \to \Prism_{R[x]}/I^2 \to \Prismbar_{R[x]}$ as
the image of $x$ (note that this boundary map is how one produces the Hodge--Tate
comparison, \cite[Cons.~4.9]{Prisms}); it lifts uniquely to $H^1( \mathcal{N}^{\geq 1}
\Prismbar_{R[x]}\left\{1\right\})$, and thus produces a map of $A^{\geq
\ast}$-modules
$A^{\geq \ast -1}\left\{-1\right\} \to H^1( \mathcal{N}^{\geq \ast}
\Prismbar_{R[x]})$. 
Next, we have the fiber sequence of $R[x]$-modules
\[ \mathcal{N}^{\geq 1} \Prismbar_{R[x]} \to \Prismbar_{R[x]} \to
R/p^{(-1)}[x^{1/p}],  \]
from the description \eqref{gr0prismbar} (and quasisyntomic descent) to
identify $\mathrm{gr}^0 \Prismbar_{R[x]} = R/p^{(-1)}[x^{1/p}]$. 
For each $0 < i < p$, the boundary map applied to $x^{i/p}$ gives a class in
$H^1( \mathcal{N}^{\geq 1} \Prismbar_{R[x]})$ of weight $i/p$; by construction,
this class is annihilated by $A^{\geq 1}$ since $R/p^{(-1)}[x^{1/p}]$ is by
definition, whence we obtain 
maps in from $A^{\geq \ast-1}/A^{\geq \ast}$. 

Since we have named the generating classes, 
to prove the isomorphism, we may assume (by base-change) that $R  $
is a $p$-torsionfree \emph{perfectoid} ring.\footnote{Let $R_0$ be a
$p$-torsionfree perfectoid
ring. Then the
construction 
$R \mapsto \mathcal{N}^{\geq \ast} \Prismbar_R$, from animated $R_0$-algebras to 
$p$-complete filtered $\mathcal{N}^{\geq \ast} \Prismbar_{R_0}$-algebras,
preserves colimits, and in particular preserves coproducts. In fact, this holds for $R \mapsto \Prismbar_R$ itself by 
the Hodge--Tate comparison, and on associated graded terms by 
\eqref{grNygaardHT} and \cite[Th.~12.2]{Prisms}. 
} 
Moreover, by descent in $R$, we may assume that $R$ contains a $p$th root of
$p$, e.g., using Andr\'e's lemma in the form of \cite[Th.~7.14]{Prisms}. 
We make this assumption for the rest of the argument.  This implies that the
Nygaard filtration on $R \simeq \Prismbar_R$ is the filtration by powers of
$p^{1/p}$, cf.~\Cref{identificationofp} and \eqref{gr0prismbar}. 

In this case, we have isomorphisms
(via the Hodge--Tate comparison \cite[Th.~4.11]{Prisms})
\[ H^i( \Prismbar_{R[x]}) 
\simeq \begin{cases} 
R\left \langle x\right\rangle, \quad i = 0 \\
R \left\{-1\right\} \left \langle x\right\rangle dx, \quad i = 1,
 \end{cases} 
\]
where the class $dx$ arises from the image of the class $x$ under the connecting
map in the cofiber sequence
$\Prismbar_{R[x]}\left\{1\right\} \to \Prism_{R[x]}/I^2
\to \Prismbar_{R[x]}$. 

Using the expression \eqref{grNygaardHT} for the Nygaard filtration (which is complete in this
case since the algebra is smooth over a perfectoid, so we can check on
associated graded terms), we find that 
multiplication by $p^{1/p}$ induces isomorphisms 
$p^{1/p}: \mathcal{N}^{\geq i} \Prismbar_{R[x]} \simeq
\mathcal{N}^{\geq i+1}
\Prismbar_{R[x]}$ for $i > 0$, also using the comparison between the
associated graded pieces of the Nygaard filtration and the Hodge--Tate
filtration \cite[Th.~12.2]{Prisms}.  
As above, we can identify the map $R \left \langle x\right\rangle\to
\Prismbar_{R[x]} \to
\mathrm{gr}^0 \Prismbar_{R[x]}$ with 
the $R$-linear map 
$R\left \langle x\right\rangle \to R/p^{1/p} \left
\langle x^{1/p}\right\rangle = (R/p\left \langle
x\right\rangle)^{(-1)}, x \mapsto x$ (unwinding the restriction
along Frobenius as in \eqref{gr0prismbar}). 
This yields
\begin{equation} H^\ast(\mathcal{N}^{\geq 1}
\Prismbar_{R[x]}) \simeq \begin{cases} 
p^{1/p} R\left \langle x\right\rangle, \quad \ast = 0 \\
R\left\{-1\right\} \left \langle x\right\rangle dx  \oplus
{\bigoplus}_{i
\geq 0, p \nmid i}R/p^{1/p} \cdot
x^{i/p}, \quad \ast = 1.\\
\end{cases} \label{polynomialringN}\end{equation} 
It follows that, as filtered $A^{\geq \ast}
\stackrel{\mathrm{def}}{=}\widehat{\mathcal{N}^{\geq \ast}
\Prismbar_{R} \otimes_{R} R[x]}$-modules in
$\mathrm{Fun}(\mathbb{Z}_{\geq 0}^{op}, \mathrm{Ab})$, the classes specified
yield a natural isomorphism
\begin{equation} 
\label{H1Npoly2}
H^1( \mathcal{N}^{\geq \ast} \Prismbar_{R[x]}) \simeq
A^{\geq \ast}\left\{-1\right\} \oplus 
\bigoplus_{i = 1}^{p-1} A^{\geq \ast-1}/ A^{\geq \ast} . \qedhere
\end{equation} 
\end{proof}

\subsection{The Hodge--Tate cohomology of a quotient}
In this subsection, we use the results of the previous subsection on polynomial
rings to get an expression (via a fiber sequence) of the Hodge--Tate cohomology of a
quotient (\Cref{prismHTfibseq}), and some control of the Nygaard filtration too
(\Cref{thicksubcatlemma2}). 
To begin we start with the (easier) case of the Nygaard pieces themselves.

\begin{proposition} 
\label{Nygaardpiecespolynomialquotient}
Let $R$ be any animated $\mathbb{Z}[x]$-algebra. 
Then there exists a natural fiber sequence of graded 
$\bigoplus_{i \geq 0} \mathcal{N}^i \Prism_R$-modules 
\begin{equation}
\left(\bigoplus_{i \geq 0} \mathcal{N}^i \Prism_R  \right)/x\to 
\bigoplus_{i \geq 0} \mathcal{N}^i \Prism_{R/x}
\to \bigoplus_{i \geq 0} \mathcal{N}^{i-1} \Prism_{R/x}. 
\label{Nygaardpieceseqpoly}
\end{equation} 
\end{proposition}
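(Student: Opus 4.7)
The plan is to construct the first map by functoriality, identify its cofiber by reducing to a polynomial case via \Cref{keycofibpolypieces}, and then extend to all animated $\mathbb{Z}[x]$-algebras by sifted-colimit-preservation.

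Since $R \to R/x$ annihilates $x$, applying $\bigoplus_i \mathcal{N}^i \Prism_{(-)}$ and factoring through the derived quotient by $x$ yields a natural map
\[ \gamma_R \colon \bigl(\bigoplus_i \mathcal{N}^i \Prism_R\bigr)/x \longrightarrow \bigoplus_i \mathcal{N}^i \Prism_{R/x}. \]
Let $C_R := \mathrm{cofib}(\gamma_R)$. The proposition then reduces to a natural equivalence $C_R \simeq \bigoplus_i \mathcal{N}^{i-1} \Prism_{R/x}$; the desired second map is then the canonical cofiber map composed with this equivalence.

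Both functors $R \mapsto C_R$ and $R \mapsto \bigoplus_i \mathcal{N}^{i-1} \Prism_{R/x}$ on animated $\mathbb{Z}[x]$-algebras preserve sifted colimits, as $\bigoplus_i \mathcal{N}^i \Prism_{(-)}$ is defined via animation, the derived quotient $R \mapsto R \otimes^{\mathbb{L}}_{\mathbb{Z}[x]} \mathbb{Z}$ preserves colimits, and cofibers commute with colimits. Polynomial $\mathbb{Z}[x]$-algebras $R = \mathbb{Z}[x, y_1, \ldots, y_n]$ generate the $\infty$-category of animated $\mathbb{Z}[x]$-algebras under sifted colimits, so it suffices to construct a natural equivalence on such polynomial algebras.

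For polynomial $R = R'[x]$ with $R' = \mathbb{Z}[y_1, \ldots, y_n]$ (so $R/x = R'$), I apply \Cref{keycofibpolypieces} with base $R'$. Its conclusion, interpreted as a natural fiber sequence
\[ A^* \to \bigoplus_i \mathcal{N}^i \Prism_{R'[x]} \to A^{*-1}[-1], \qquad A^* := \widehat{\bigoplus_i \mathcal{N}^i \Prism_{R'} \otimes_{R'} R'[x]}, \]
extends from qrsp bases to our animated $R'$ because the three terms are sifted-colimit-preserving in the base (the last being the cofiber of a natural map). Cofibering by multiplication by $x$, and using that $A^*/x \simeq \bigoplus_i \mathcal{N}^i \Prism_{R'}$ (as $A^*$ is essentially the polynomial ring in $x$ over $\bigoplus_i \mathcal{N}^i \Prism_{R'}$), produces the natural fiber sequence
\[ \bigoplus_i \mathcal{N}^i \Prism_{R'} \xrightarrow{\sigma_R} \bigl(\bigoplus_i \mathcal{N}^i \Prism_{R'[x]}\bigr)/x \longrightarrow \bigoplus_i \mathcal{N}^{i-1} \Prism_{R'}[-1], \]
whose first map $\sigma_R$ is induced by the canonical section $R' \to R'[x]$. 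The composite $\gamma_R \circ \sigma_R$ corresponds to $R' \to R'[x] \to R'$, i.e.\ the identity on $R'$, so $\sigma_R$ is a section of $\gamma_R$. The above fiber sequence therefore splits in the stable $\infty$-category, and I read off $C_R \simeq \bigoplus_i \mathcal{N}^{i-1} \Prism_{R'}[-1][1] = \bigoplus_i \mathcal{N}^{i-1} \Prism_{R'} = \bigoplus_i \mathcal{N}^{i-1} \Prism_{R/x}$, as desired.

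The principal obstacle is upgrading this equivalence, constructed on polynomial $R = R'[x]$ using a section $R' \to R'[x]$ that is unavailable for a general $\mathbb{Z}[x]$-algebra, into a natural transformation on all animated $\mathbb{Z}[x]$-algebras. This is handled by the observation that the polynomial-case construction is functorial in the base $R' = R/x$, so produces a natural equivalence of two sifted-colimit-preserving functors on the polynomial generating subcategory, which extends uniquely to all animated $\mathbb{Z}[x]$-algebras.
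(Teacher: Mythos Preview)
Your strategy is close in spirit to the paper's, but the final naturality step contains a genuine gap. You construct the equivalence $C_R \simeq \bigoplus_i \mathcal{N}^{i-1}\Prism_{R/x}$ on polynomial $\mathbb{Z}[x]$-algebras $R = R'[x]$ via the section $R' \to R'[x]$, and this is indeed natural in $R'$. However, the full subcategory of polynomial $\mathbb{Z}[x]$-algebras that generates under sifted colimits has as morphisms \emph{all} $\mathbb{Z}[x]$-algebra maps, not just those induced by maps of the base $R'$. For instance, the $\mathbb{Z}[x]$-algebra endomorphism of $\mathbb{Z}[x,y]$ given by $y \mapsto y+x$ induces the identity on $R/x = \mathbb{Z}[y]$, yet is not the identity on $R$; your splitting, which depends on the section $R' \hookrightarrow R'[x]$, is not visibly compatible with such maps. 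Consequently, ``functorial in $R' = R/x$'' does not yield a natural transformation on the generating subcategory, and the left Kan extension argument does not go through as stated. Relatedly, the fiber sequence $A^* \to \bigoplus_i \mathcal{N}^i\Prism_{R'[x]} \to A^{*-1}[-1]$ you invoke is a \emph{splitting} of the truncation sequence coming from the weight decomposition, and this splitting is not natural for non-weight-preserving maps like $y \mapsto y+x$.

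The paper circumvents this by arguing via base-change rather than animation. It establishes the cofiber sequence $\bigoplus_i \mathcal{N}^{i-1}\Prism_B[-1] \to (\bigoplus_i \mathcal{N}^i\Prism_{B[x]})/x \to \bigoplus_i \mathcal{N}^i\Prism_B$ for $B \in \qrsp_{\mathbb{Z}_p}$ using the internal weight decomposition of $(\bigoplus_i \mathcal{N}^i\Prism_{B[x]})/x$ itself (natural in $B$), and then for general $R$ obtains the claimed sequence by tensoring this over $\bigoplus_i \mathcal{N}^i\Prism_{B[x]}$ with $\bigoplus_i \mathcal{N}^i\Prism_R$, using the symmetric monoidality of $\bigoplus_i \mathcal{N}^i\Prism_{(-)}$ over a perfectoid base and quasisyntomic descent to identify the resulting terms. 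This base-change is automatically natural in the $\mathbb{Z}[x]$-algebra $R$, which is exactly what your argument is missing. A repair along your lines would be to observe directly that $C_R$ is obtained from $C_{\mathbb{Z}[x]}$ by base-change along $\bigoplus_i \mathcal{N}^i\Prism_{\mathbb{Z}[x]} \to \bigoplus_i \mathcal{N}^i\Prism_R$, reducing everything to the single case $R = \mathbb{Z}[x]$; but this is then essentially the paper's argument.
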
 
\begin{proof} 
First, let $B \in \qrsp_{\mathbb{Z}_p}$. 
We construct a cofiber sequence, naturally in $B$,
of $\bigoplus_{i \geq 0} \mathcal{N}^i \Prism_{B[x]}/x$-modules
\begin{equation} \label{weightgrfilt}
\bigoplus_{i \geq 0} \mathcal{N}^{i-1} \Prism_B [-1] \to 
\bigoplus_{i \geq 0} \mathcal{N}^i \Prism_{B[x]}/x \to \bigoplus_{i \geq 0}
\mathcal{N}^i \Prism_{B}
.\end{equation}
To construct this, we use \Cref{keycofibpolypieces}, 
which shows that 
the (bi)graded $E_\infty$-ring
$\bigoplus_{i \geq 0} \mathcal{N}^i \Prism_{B[x]}/x$ is concentrated in weights
$0$ and $1$, using the weight grading on $B[x]$ with $|x| =1$ and $B$ in weight
zero. Now any weight-graded module 
over
$\bigoplus_{i \geq 0} \mathcal{N}^i \Prism_{B[x]}/x$
admits a filtration by the weight
grading, which gives the cofiber sequence \eqref{weightgrfilt}, using again 
\Cref{keycofibpolypieces} to identify the weight zero and weight one components
with 
$\bigoplus_{i \geq 0} \mathcal{N}^i \Prism_B$ and $\bigoplus_{i \geq 0}
\mathcal{N}^{i-1} \Prism_B$. 

By base-change and descent, one now deduces the proposition.  In fact, we may
assume that $R$ is an $B[x]$-algebra  for some $B \in \qrsp_{\mathbb{Z}_p}$,
provided everything is done independently of the choice of $B$. 
Then the desired 
\eqref{Nygaardpieceseqpoly} follows from \eqref{weightgrfilt}, using that
\[ \bigoplus_{i \geq 0} \mathcal{N}^i \Prism_R \otimes_{\bigoplus_{i \geq 0}
\mathcal{N}^i\Prism_{B[x]}} \bigoplus_{i \geq 0} \mathcal{N}^i\Prism_B  \to \bigoplus_{i \geq 0}
\mathcal{N}^i\Prism_{R/x} \]
is a $p$-adic equivalence. 
\end{proof}

\begin{proposition} 
\label{thicksubcatlemma}
Let $B \in \qrsp_{\mathbb{Z}_p}$. Then, naturally in $B$, there is a finite
filtration on $\mathcal{N}^{\geq \ast}
\Prismbar_{B[x]} / x$  in 
$\mathcal{N}^{\geq \ast}
\Prismbar_{B[x]} / x$-modules 
 whose associated graded terms are 
$\mathcal{N}^{\geq \ast} \Prismbar_B$, 
$(p-1)$ copies of 
$\left(\mathcal{N}^{\geq \ast -1}\Prismbar_B/\mathcal{N}^{\geq \ast}\Prismbar_B\right)[-1]$, and 
$\mathcal{N}^{\geq \ast-1}\Prismbar_B\left\{-1\right\}[-1]$. 
\end{proposition}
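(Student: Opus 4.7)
The plan is to mirror the proof of \Cref{Nygaardpiecespolynomialquotient} by equipping $M := \mathcal{N}^{\geq \ast}\Prismbar_{B[x]}/x$ with the weight grading arising from the $\mathbb{G}_m^{\mathrm{perf}}$-action on $B[x]$ that scales $x$ with weight $1$ (and fixes $B$). Since $x$ itself has weight $1$, the weight-$w$ piece of $M$ identifies naturally with $\mathrm{cofib}(x : \Prismbar_{B[x]}(w-1) \to \Prismbar_{B[x]}(w))$ in filtered $\mathcal{D}(\mathbb{Z}_p)$, where $\Prismbar_{B[x]}(w) = 0$ for $w < 0$. The weight grading automatically provides $M$ with a finite filtration in $\mathcal{N}^{\geq \ast}\Prismbar_{B[x]}/x$-modules, natural in $B$, so the problem reduces to computing the finitely many nonzero weight pieces.

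\Cref{prismbarpolyHT} records the weight grading of each summand of the cohomology of $\mathcal{N}^{\geq \ast}\Prismbar_{B[x]}$: the $H^0$-summand $A^{\geq \ast}$ is supported in integer weights $\{0, 1, 2, \ldots\}$ (generator in weight $0$); the first $H^1$-summand $A^{\geq \ast-1}\{-1\}$ is supported in integer weights $\{1, 2, \ldots\}$ (generator in weight $1$); and the $i$th copy of $A^{\geq \ast-1}/A^{\geq \ast}$ for $1 \leq i \leq p-1$ is supported in weights $\{i/p, i/p+1, i/p+2, \ldots\}$ (generator in weight $i/p$). On the cohomology, multiplication by $x$ is a weight-$1$ isomorphism between successive weight pieces of each summand, except into the lowest weight, which is not in the image. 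Hence $M(w)$ must vanish except at the lowest weight of some summand, leaving only the weights $w \in \{0\} \cup \{i/p : 1\leq i \leq p-1\} \cup \{1\}$.

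Reading off the generator pieces and remembering the cohomological shift $[-1]$ for classes in $H^1$, one obtains $M(0) = \mathcal{N}^{\geq \ast}\Prismbar_B$ (from $A^{\geq \ast}$), $M(i/p) = (\mathcal{N}^{\geq \ast-1}\Prismbar_B/\mathcal{N}^{\geq \ast}\Prismbar_B)[-1]$ for $1 \leq i \leq p-1$ (from the $i$th copy), and $M(1) = \mathcal{N}^{\geq \ast-1}\Prismbar_B\{-1\}[-1]$ (from $A^{\geq \ast-1}\{-1\}$), matching the statement. The one place that requires care is $M(1)$: here $\Prismbar_{B[x]}(1)$ is a genuine two-step complex with both $H^0 = \mathcal{N}^{\geq \ast}\Prismbar_B$ and $H^1 = \mathcal{N}^{\geq \ast-1}\Prismbar_B\{-1\}$ nonzero, so one cannot immediately argue summand by summand. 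The point is that $x : \Prismbar_{B[x]}(0) \to \Prismbar_{B[x]}(1)$ factors through $\tau^{\leq 0}\Prismbar_{B[x]}(1)$ (as the source lives in cohomological degree $0$) and induces an isomorphism on $H^0$, so its cofiber is $\tau^{\geq 1}\Prismbar_{B[x]}(1) = H^1(\Prismbar_{B[x]}(1))[-1]$ regardless of any possible nontrivial extension between the two cohomological layers. This is the only non-formal step; the rest is bookkeeping from \Cref{prismbarpolyHT}.
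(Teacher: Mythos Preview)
Your argument is correct and is essentially the paper's proof: both use the weight grading on $\mathcal{N}^{\geq \ast}\Prismbar_{B[x]}/x$ coming from Proposition~\ref{prismbarpolyHT} to read off the finitely many nonzero weight components. You supply more detail than the paper at weight $1$ (where both $H^0$ and $H^1$ contribute before modding out by $x$), but the underlying mechanism is identical.
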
 
\begin{proof} 
In fact, this follows from the natural expression 
\eqref{prismbarpolyHT}, noting the weight grading (with $|x| = 1$). In
particular, 
$\mathcal{N}^{\geq \ast}
\Prismbar_{B[x]} / x$ has weights in $0, \frac{1}{p}, \frac{2}{p}, \dots, 1$
with the weight zero component being $\mathcal{N}^{\geq \ast} \Prismbar_B$, the
weight $\frac{i}{p}$ component for $0 < i < p$ being 
$\left(\mathcal{N}^{\geq \ast -1}\Prismbar_B/\mathcal{N}^{\geq \ast}\Prismbar_B\right)[-1]$, and the weight
$1$ component being $\mathcal{N}^{\geq \ast-1}\Prismbar_B\left\{-1\right\}[-1]$. 
\end{proof}

\begin{corollary} 
\label{thicksubcatlemma2}
Let $A$ be any animated $\mathbb{Z}[x]$-algebra. 
Then the filtered object $\mathcal{N}^{\geq \ast} \Prismbar_{A}/x\left\{i\right\}$ 
admits a natural finite filtration, whose associated graded terms
are 
$\mathcal{N}^{\geq \ast} \Prismbar_{A/x}\left\{i\right\}$, 
then $(p-1)$ copies of
$\left(\mathcal{N}^{\geq \ast -1}\Prismbar_{A/x}/\mathcal{N}^{\geq
\ast}\Prismbar_{A/x}\right)\left\{i\right\}[-1]$, and 
$\mathcal{N}^{\geq \ast-1}\Prismbar_{A/x}\left\{i-1\right\}[-1]$. 
\end{corollary}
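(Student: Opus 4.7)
The plan is to deduce this corollary from Proposition \ref{thicksubcatlemma} by base changing the filtration there along $B[x] \to A$ for $B \in \qrsp_{\mathbb{Z}_p}$, followed by quasisyntomic descent and left Kan extension to handle arbitrary animated $\mathbb{Z}[x]$-algebras. The strategy is parallel to how Proposition \ref{Nygaardpiecespolynomialquotient} is deduced from the weight-graded cofiber sequence \eqref{weightgrfilt}, with the weight decomposition there replaced by the finite filtration of Proposition \ref{thicksubcatlemma}.

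First, I would fix $B \in \qrsp_{\mathbb{Z}_p}$ and view $A$ as a $B[x]$-algebra. Tensoring the filtration on $\mathcal{N}^{\geq \ast}\Prismbar_{B[x]}/x$ from Proposition \ref{thicksubcatlemma} with the invertible twist $\{i\}$, and then base changing along the filtered $E_\infty$-algebra map $\mathcal{N}^{\geq \ast}\Prismbar_{B[x]}/x \to \mathcal{N}^{\geq \ast}\Prismbar_A/x$, produces a candidate natural finite filtration on $\mathcal{N}^{\geq \ast}\Prismbar_A/x\{i\}$. The key identification is then of the graded pieces: I would show that the base change of $\mathcal{N}^{\geq \ast}\Prismbar_B\{i\}$ along $\mathcal{N}^{\geq \ast}\Prismbar_{B[x]} \to \mathcal{N}^{\geq \ast}\Prismbar_A$ yields $\mathcal{N}^{\geq \ast}\Prismbar_{A/x}\{i\}$, using the animated pushout $A/x \simeq A \otimes^{\mathbb{L}}_{B[x]} B$, with analogous statements for the graded pieces involving $\mathcal{N}^{\geq \ast-1}\Prismbar_B/\mathcal{N}^{\geq \ast}\Prismbar_B$ and $\mathcal{N}^{\geq \ast-1}\Prismbar_B\{-1\}$.

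Next, I would handle arbitrary animated $\mathbb{Z}[x]$-algebras $A$ by quasisyntomic descent from $\qrsp_{\mathbb{Z}_p}$ and left Kan extension from polynomial rings, the standard mechanism used throughout the paper to propagate prismatic constructions. Independence of the result from the choice of $B$ follows automatically from the naturality in $B$ asserted by Proposition \ref{thicksubcatlemma}, exactly as in the parenthetical remark within the proof of Proposition \ref{Nygaardpiecespolynomialquotient}.

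The main obstacle I anticipate is verifying the filtered base change identity $\mathcal{N}^{\geq \ast}\Prismbar_A \otimes^{\mathbb{L}}_{\mathcal{N}^{\geq \ast}\Prismbar_{B[x]}} \mathcal{N}^{\geq \ast}\Prismbar_B \simeq \mathcal{N}^{\geq \ast}\Prismbar_{A/x}$ in the $p$-complete sense. The subtlety is that $\mathrm{gr}^0\Prismbar_R \simeq R/p^{(-1)}$ involves a Frobenius restriction (cf.~\eqref{gr0prismbar}), so one must check that the base change is compatible with taking associated gradeds; via \eqref{grNygaardHT}, this reduces to the analogous base change for the individual Nygaard pieces $\mathcal{N}^i\Prism$ already exploited in the proof of Proposition \ref{Nygaardpiecespolynomialquotient}, together with the twisted Nygaard formalism of \Cref{TwistedNygaard} to track the Breuil--Kisin twist $\{i\}$.
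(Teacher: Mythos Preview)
Your proposal is correct and follows essentially the same route as the paper. The paper's own proof is the terse two-sentence version of exactly what you describe: reduce by quasisyntomic descent to $A$ living over $B[x]$ for $B \in \qrsp_{\mathbb{Z}_p}$, then base change the filtration of Proposition~\ref{thicksubcatlemma}; the filtered base-change identity you flag as the main obstacle is handled by the symmetric monoidal (colimit-preserving) property of $R \mapsto \mathcal{N}^{\geq \ast}\Prismbar_R$ over a perfectoid base recorded in the footnote to Proposition~\ref{prismbarpolyHT}.
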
 
\begin{proof}
It suffices to replace $\mathbb{Z}[x]$ by $B[x]$ for $ B \in
\qrsp_{\mathbb{Z}_p}$ and construct the filtration naturally in $B$, by
quasisyntomic descent. But then the claim follows from \Cref{thicksubcatlemma}. 
\end{proof} 

We separately record the resulting fiber sequence for Hodge--Tate cohomology
itself (forgetting the Nygaard filtration in \Cref{thicksubcatlemma2}).  Such a fiber sequence can also be produced using the
description of the Hodge--Tate stack of the affine line,
cf.~\cite[Ex.~9.1]{BhattLurieprismatization}. 
\begin{corollary} 
\label{prismHTfibseq}
Let $R$ be any animated $\mathbb{Z}[x]$-algebra. Then there is a natural fiber
sequence
\begin{equation}  \label{naturalfibseq} \Prismbar_{R}/x\left\{i\right\} \to \Prismbar_{R/x}\left\{i\right\}  \to
\Prismbar_{R/x}\left\{i-1\right\}. \end{equation} 
\end{corollary}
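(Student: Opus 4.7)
The strategy is to deduce this from \Cref{thicksubcatlemma2} by passing to underlying objects (i.e., forgetting the Nygaard filtration). Applied with $A = R$, that corollary provides a natural finite filtration on the filtered object $\mathcal{N}^{\geq \ast}\Prismbar_R\{i\}/x$ whose associated graded pieces are $\mathcal{N}^{\geq \ast}\Prismbar_{R/x}\{i\}$, together with $(p-1)$ copies of $\left(\mathcal{N}^{\geq \ast-1}\Prismbar_{R/x}/\mathcal{N}^{\geq \ast}\Prismbar_{R/x}\right)\{i\}[-1]$, and finally $\mathcal{N}^{\geq \ast-1}\Prismbar_{R/x}\{i-1\}[-1]$.

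The crucial observation is that the $p-1$ intermediate pieces vanish upon forgetting the filtration. Indeed, the underlying object of $\mathcal{N}^{\geq \ast}\Prismbar_{R/x}$ (taking the colimit as $\ast \to -\infty$, i.e., its value at $\ast \leq 0$) is $\Prismbar_{R/x}$, and likewise the shifted filtration $\mathcal{N}^{\geq \ast-1}\Prismbar_{R/x}$ has the same underlying object $\Prismbar_{R/x}$. Hence the underlying object of the quotient $\mathcal{N}^{\geq \ast-1}\Prismbar_{R/x}/\mathcal{N}^{\geq \ast}\Prismbar_{R/x}$ vanishes. Thus after passing to underlying objects, the finite filtration on $\Prismbar_R\{i\}/x$ collapses to a two-step filtration with associated graded pieces $\Prismbar_{R/x}\{i\}$ and $\Prismbar_{R/x}\{i-1\}[-1]$.

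A two-step filtration is equivalently a cofiber sequence. The weight grading used in the construction (with $|x|=1$, via \Cref{thicksubcatlemma}) places $\Prismbar_{R/x}\{i\}$ in weight $0$ and $\Prismbar_{R/x}\{i-1\}[-1]$ in weight $1$; the natural (decreasing) weight filtration puts the weight-$1$ piece as the subobject and the weight-$0$ piece as the quotient. The resulting fiber sequence
\[ \Prismbar_{R/x}\{i-1\}[-1] \longrightarrow \Prismbar_R\{i\}/x \longrightarrow \Prismbar_{R/x}\{i\} \]
rotates to the desired
\[ \Prismbar_R\{i\}/x \longrightarrow \Prismbar_{R/x}\{i\} \longrightarrow \Prismbar_{R/x}\{i-1\}. \]
Naturality in $R$ is inherited from that of \Cref{thicksubcatlemma2}.

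The argument is essentially bookkeeping and carries no substantive difficulty once \Cref{thicksubcatlemma2} is in hand. The one point that deserves minor care is identifying the direction of the weight filtration (i.e., which surviving piece is the subobject and which is the quotient); this can be cross-checked against the canonical map $\Prismbar_R\{i\}/x \to \Prismbar_{R/x}\{i\}$ induced by the quotient $R \to R/x$, which must agree with the projection onto the top quotient of the filtration.
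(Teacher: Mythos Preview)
Your proof is correct and follows exactly the approach the paper indicates: the paper states the corollary as the result of ``forgetting the Nygaard filtration in \Cref{thicksubcatlemma2}'' without further elaboration, and your argument spells out precisely what that entails, including the key observation that the $(p-1)$ intermediate graded pieces have trivial underlying object. Your sanity check for the direction of the two-step filtration via the natural map $\Prismbar_R\{i\}/x \to \Prismbar_{R/x}\{i\}$ is a good touch and matches the convention used in the analogous \Cref{Nygaardpiecespolynomialquotient}.
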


\section{$F$-smoothness}

The goal of this section is to formulate the notion of $F$-smoothness
(\Cref{filtSegaldef}). This is a variant of ($p$-adic) smoothness designed to capture
smoothness in an absolute sense. For instance, smooth algebras over any
perfectoid ring are $F$-smooth (\Cref{FSmoothPerfd}), as are regular rings
(\Cref{regularringsSegal}); in fact, the latter is the main result of this
section. Our idea is to essentially define $F$-smoothness by demanding a strong
form of the $L\eta$-isomorphism in relative prismatic cohomology (\cite[Theorem
15.3]{Prisms}, \cite{BMS1}), adapted to the absolute prismatic context using the
Beilinson $t$-structure interpretation of the $L\eta$ functor as in \cite[\S
5.1]{BMS2} (see \Cref{FSmoothBeilinson}). To work effectively with this notion,
we need access to the certain naturally defined elements of the prismatic
cohomology (or variants) of $\mathbb{Z}_p$ introduced in \S\ref{sec:calculations}.

\subsection{$F$-smoothness: definition}

Let $A$ be a $p$-quasisyntomic ring. 
Recall \cite[Def.~4.1]{BMS2} that an object $M \in \mathcal{D}(A)$ has
\emph{$p$-complete $\mathrm{Tor}$-amplitude} in  degrees $\geq r$ if for every
discrete $A/p$-module $N$, we have $M \otimes_A^L N \in \mathcal{D}^{\geq r}(A)$. 
\begin{definition}[$F$-smoothness]
\label{filtSegaldef}
We say that $A$ \emph{is $F$-smooth} if for each $i \in \mathbb{Z}_{\geq 0}$,
the map in $\mathcal{D}(A)$,
\[ \phi_i: \mathcal{N}^i \Prism_A \to {\Prismbar_A}\left\{i\right\}  \]
induced by the Frobenius on $\Prism_A$  has fiber $\mathrm{fib}(\phi_i ) $ with $p$-complete $\mathrm{Tor}$-amplitude
in  degrees $\geq i+2$ and if the Nygaard filtration on
$\Prismbar_A\left\{i\right\}$ (or
equivalently $\Prism_A\left\{i\right\}$) is complete. 
Note that this condition only depends on the $p$-completion of $A$. 

  We say that a $p$-quasisyntomic scheme is $F$-smooth  if it is covered by the
  spectra of rings which are $F$-smooth (note that $F$-smoothness is preserved by Zariski localization by \Cref{FSmoothColimitEtale} below).
  \end{definition}

The condition of Nygaard-completeness in the definition of $F$-smoothness is
slightly delicate. 
In order to work with it, we  will also use the following auxiliary condition. 
\begin{definition}[Weak $F$-smoothness]
\label{DefWeakFSmooth}
We say that a $p$-quasisyntomic ring $A$ is {\em weakly $F$-smooth} if for each $i$, the object
\begin{equation} \mathrm{fib}\left(  \theta: \mathcal{N}^{i} \frac{\Prism_A}{p}
\to \mathcal{N}^{i+p} \frac{\Prism_A}{p} \right) \in \mathcal{D}(A) ,\end{equation}
has $p$-complete $\mathrm{Tor}$-amplitude in degrees $\geq i+1$.  If $A$ is $p$-torsionfree and weakly $F$-smooth, then the above fiber is concentrated in degrees $\geq i+2$, as it is $p$-torsion. 
\end{definition}
\begin{proposition}[$F$-smoothness vs weak $F$-smoothness]
\label{FSmoothWeak1}
If a $p$-quasisyntomic ring $A$ is $F$-smooth,  then $A$ is weakly $F$-smooth.
Conversely, the $p$-quasisyntomic ring $A$ is $F$-smooth if and only it is weakly $F$-smooth and the natural map
of graded $E_\infty$-rings
\begin{equation}  \bigoplus_i \phi_i :  \bigoplus_{i \geq 0} \mathcal{N}^i \frac{\Prism_A}{p} \to 
\bigoplus_{i \in \mathbb{Z}} \frac{ \widehat{\Prismbar}_A\left\{i\right\}}{p}
\label{needtoinverttheta}
\end{equation} 
(where the target denotes the direct sum of the \emph{Nygaard-completed}
Hodge--Tate cohomologies mod $p$)
exhibits the target as the localization of the source at $\theta$. 
\end{proposition}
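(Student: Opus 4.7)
The plan revolves around the commutative square
\[
\xymatrix{
\mathcal{N}^i \Prism_A/p \ar[r]^-{\theta} \ar[d]_{\phi_i} & \mathcal{N}^{i+p} \Prism_A/p \ar[d]^{\phi_{i+p}} \\
\Prismbar_A\left\{i\right\}/p \ar[r]^-{\theta} & \Prismbar_A\left\{i+p\right\}/p
}
\]
whose bottom horizontal is an isomorphism: by \Cref{widetildethetaimage} and \Cref{invertingtheta}, the class $\theta$ maps under $\bigoplus_i \phi_i$ to a generator of the invertible Hodge--Tate module $\Prismbar_A\left\{p\right\}/p$, so multiplication by $\theta$ acts invertibly on the graded Hodge--Tate cohomology. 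Computing the total fiber of this square in two ways gives the key identification
\[
\mathrm{fib}\!\left(\theta : \mathcal{N}^i \tfrac{\Prism_A}{p} \to \mathcal{N}^{i+p} \tfrac{\Prism_A}{p}\right) \simeq \mathrm{fib}\!\left(\theta : \tfrac{\mathrm{fib}(\phi_i)}{p} \to \tfrac{\mathrm{fib}(\phi_{i+p})}{p}\right),
\]
bridging the two conditions in the proposition.

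The forward implications are then short. If $A$ is $F$-smooth, the right-hand side above has $p$-complete Tor-amplitude $\geq i+2$ as the fiber of a map of objects with Tor-amplitudes $\geq i+2$ and $\geq i+p+2$, so the left-hand side inherits this bound, yielding weak $F$-smoothness. The localization statement follows from \Cref{invertingtheta} (which identifies the source localized at $\theta$ with the \emph{uncompleted} graded Hodge--Tate cohomology) combined with the Nygaard-completeness built into $F$-smoothness.

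For the converse, assume weak $F$-smoothness and the localization condition. Combining the hypothesis with \Cref{invertingtheta} forces $\Prismbar_A\left\{i\right\}/p \to \widehat{\Prismbar}_A\left\{i\right\}/p$ to be an isomorphism for every $i$; via \Cref{whentwistedcomplete} and derived $p$-completeness of the Nygaard tower, this propagates to Nygaard-completeness of $\Prism_A\left\{i\right\}$. For the Tor-amplitude condition, the identification above shows each consecutive fiber in the tower $\mathrm{fib}(\phi_i)/p \xrightarrow{\theta} \mathrm{fib}(\phi_{i+p})/p \xrightarrow{\theta} \cdots$ has $p$-complete Tor-amplitude $\geq i+jp+1$ at step $j$, and hence cohomological degree $\geq i+jp+2$ since these are $p$-torsion $A/p$-module complexes. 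Meanwhile, inverting $\theta$ on the fiber sequence $\mathrm{fib}(\phi_\ast)/p \to \bigoplus_i \mathcal{N}^i \Prism_A/p \to \bigoplus_i \Prismbar_A\left\{i\right\}/p$ yields, by \Cref{invertingtheta}, that $\mathrm{fib}(\phi_\ast)/p[\theta^{-1}] = 0$, equivalently $\mathrm{colim}_j \mathrm{fib}(\phi_{i+jp})/p = 0$ for each $i$. A long-exact-sequence chase along the tower---each step-$j$ map is an isomorphism on $H^k$ for $k \leq i+jp$ and an injection at $k = i+jp+1$---together with the vanishing colimit then forces $H^k(\mathrm{fib}(\phi_i)/p) = 0$ for $k \leq i+1$, giving $p$-complete Tor-amplitude $\geq i+2$ for $\mathrm{fib}(\phi_i)$.

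The main obstacle is the final colimit argument: the weaker bound $\geq i+1$ would follow at once from $p$-complete Tor-amplitude alone, so sharpening to the required $\geq i+2$ demands the $p$-torsion upgrade from Tor-amplitude to cohomological degree at each rung of the tower.
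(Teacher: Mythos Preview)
Your forward implications match the paper's proof: the same commutative square with invertible bottom row, and the localization condition via \Cref{invertingtheta} plus Nygaard-completeness. One small slip: reducing $\mathrm{fib}(\phi_i)$ mod $p$ drops the $p$-complete Tor-amplitude by one, since for any discrete $A/p$-module $N$ one has $(M/p)\otimes_A^L N \simeq (M\otimes_A^L N)\oplus(M\otimes_A^L N)[1]$; thus $\mathrm{fib}(\phi_i)/p$ has Tor-amplitude $\geq i+1$, not $\geq i+2$. The conclusion (weak $F$-smoothness, which asks only for $\geq i+1$) is unaffected.

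For the converse, your tower argument supplies detail the paper's terse proof omits, and the strategy is sound, but there is a genuine gap in the execution. The step ``$p$-complete Tor-amplitude $\geq i+jp+1$, hence cohomological degree $\geq i+jp+2$ since these are $p$-torsion $A/p$-module complexes'' is not justified: Tor-amplitude bounds do not in general yield connectivity bounds, and the remark in \Cref{DefWeakFSmooth} you are implicitly invoking requires $A$ to be $p$-torsionfree, which the proposition does not assume. So the long-exact-sequence chase, as written, does not apply to the tower $\{\mathrm{fib}(\phi_{i+jp})/p\}_j$ itself.

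The fix is simple: rather than upgrading Tor-amplitude to connectivity, tensor the entire tower with an arbitrary discrete $A/p$-module $N$ before running the chase. Tensoring preserves the filtered colimit (which remains zero by \Cref{invertingtheta}), and the Tor-amplitude hypothesis now says directly that the step-$j$ fiber, after $\otimes_A^L N$, lies in $\mathcal{D}^{\geq i+jp+1}$. The chase then gives $(\mathrm{fib}(\phi_i)/p)\otimes_A^L N \in \mathcal{D}^{\geq i+1}$ for every such $N$, i.e., $\mathrm{fib}(\phi_i)/p$ has $p$-complete Tor-amplitude $\geq i+1$; by the mod-$p$ relation above this is equivalent to $\mathrm{fib}(\phi_i)$ having $p$-complete Tor-amplitude $\geq i+2$, as $F$-smoothness requires. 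In particular, your ``main obstacle'' dissolves: the bound $\geq i+1$ on the mod-$p$ fiber is already the correct target.
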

\begin{proof} 
The first claim follows from the commutative diagram
\[ \xymatrix{
\mathcal{N}^i \frac{\Prism_A}{p} \ar[d]^{\phi_i}  \ar[r]^{\theta} & \mathcal{N}^{i+p}
\frac{\Prism_A}{p} \ar[d]^{\phi_{i+p}}  \\
\frac{\Prismbar_A\left\{i\right\}}{p} \ar[r]^{\theta, \simeq} &
\frac{\Prismbar_A\left\{i+p\right\}}{p} 
}\]
obtained from the map of graded 
$E_\infty$-rings $\bigoplus_{i \geq 0} \mathcal{N}^i \frac{\Prism_A}{p} \to
\bigoplus_{i \in \mathbb{Z}}  \frac{\Prismbar_A\left\{i\right\}}{p}$. 
The second claim follows from the above and \Cref{invertingtheta}: the localization of the
source in \eqref{needtoinverttheta} is precisely the mod $p$ Hodge--Tate
cohomology. 
\end{proof} 

\begin{remark}[Stability of weak $F$-smoothness under filtered colimits and \'etale localization]
\label{WeakFSmoothColimitEtale}
As the construction $A \mapsto \mathcal{N}^i \Prism_A \in \mathcal{D}(A)$ commutes with
$p$-completed filtered colimits and  \'etale localization, it follows that the
collection of weakly $F$-smooth rings is closed under filtered colimits and
 \'etale localizations inside all $p$-quasisyntomic rings. Moreover, weak $F$-smoothness can be detected locally for the  \'etale topology.
\end{remark}

\begin{remark}[Essential constancy of the twisted Nygaard filtration under weak $F$-smoothness]
\label{EssConstTwistNyg}
If $A$ is weakly $F$-smooth, then for any fixed integer $n$, we have
\[ H^n\left(\mathrm{fib}\left(  \theta: \mathcal{N}^{j} \frac{\Prism_A}{p} \to \mathcal{N}^{j+p} \frac{\Prism_A}{p} \right)\right) = 0 \quad \text{for} \quad j \gg 0.\]
It follows that the twisted Nygaard filtration on
$\frac{\Prismbar_A\left\{i\right\}}{p}$ (\Cref{TwistedNygaard}) is essentially constant in each cohomological degree; moreover, the implicit constants are independent of $A$.
\end{remark}

\begin{proposition}[Stability of $F$-smoothness under filtered colimits and \'etale localization]
\label{FSmoothColimitEtale}
The property of being $F$-smooth is stable under filtered colimits.
\end{proposition}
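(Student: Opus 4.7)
The plan is to verify separately the two clauses of \Cref{filtSegaldef} for $A = \varinjlim_j A_j$, where each $A_j$ is $F$-smooth (and the colimit is a $p$-quasisyntomic ring).

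For the Tor-amplitude clause on $\mathrm{fib}(\phi_i: \mathcal{N}^i \Prism_A \to \Prismbar_A\{i\})$, I would observe that both functors $A \mapsto \mathcal{N}^i \Prism_A$ and $A \mapsto \Prismbar_A\{i\}$ commute with $p$-completed filtered colimits---the former by \Cref{WeakFSmoothColimitEtale}, the latter via the Hodge--Tate comparison and the filtered-colimit-preservation of the cotangent complex. Thus the fiber of $\phi_i$ over $A$ is the $p$-completed filtered colimit of the corresponding fibers over the $A_j$, and the $p$-complete Tor-amplitude bound $\geq i+2$ passes through such colimits. This step will be essentially formal.

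The main obstacle is Nygaard-completeness, since completeness of a filtration is not generally stable under filtered colimits. To get around this, I would leverage the \emph{uniform} essential constancy from \Cref{EssConstTwistNyg}. By \Cref{FSmoothWeak1}, each $A_j$ is weakly $F$-smooth, so the twisted Nygaard filtration $\tN^{\geq \ast} \Prismbar_{A_j}\{i\}/p$ is essentially constant in each cohomological degree $n$, with an implicit bound $c(n,i)$ independent of $j$. Simultaneously, Nygaard-completeness of $\Prismbar_{A_j}\{i\}$ is equivalent, via \Cref{whentwistedcomplete}, to completeness of this twisted filtration, so its stable value must be zero. The combination of uniform essential constancy with a vanishing stable value forces the uniform vanishing $H^n(\tN^{\geq k} \Prismbar_{A_j}\{i\}/p) = 0$ for all $k \geq c(n,i)$ and all $j$.

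To finish, since the construction $A \mapsto \tN^{\geq k} \Prismbar_A\{i\}/p$ is built from $A \mapsto \mathcal{N}^j \Prism_A/p$ via fibers of multiplication by $\theta$ (\Cref{TwistedNygaard}), it commutes with filtered colimits; consequently the uniform vanishing transfers to $A$, yielding completeness of $\tN^{\geq \ast} \Prismbar_A\{i\}/p$. By \Cref{whentwistedcomplete}, this upgrades to Nygaard-completeness of $\Prismbar_A\{i\}$ itself (passing from mod $p$ to integral statements uses that both the Nygaard filtration and its limit are $p$-complete, so a $p$-divisible $p$-complete object is zero). Combined with the Tor-amplitude clause, this gives $F$-smoothness of $A$.
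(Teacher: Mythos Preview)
Your proof is correct and follows essentially the same approach as the paper's: both reduce Nygaard-completeness to the uniform essential constancy of the twisted Nygaard filtration supplied by \Cref{EssConstTwistNyg}, which is available once each $A_j$ is weakly $F$-smooth via \Cref{FSmoothWeak1}. The paper packages this as the statement that the $p$-completed colimit of $\widehat{\Prismbar}_{A_j}\{i\}$ computes $\widehat{\Prismbar}_A\{i\}$ (whence it agrees with $\Prismbar_A\{i\}$, since each $\Prismbar_{A_j}\{i\}$ is already Nygaard-complete), whereas you unwind this into the equivalent assertion that $H^n\bigl(\tN^{\geq k}\,\Prismbar_{A_j}\{i\}/p\bigr)$ vanishes for $k \geq c(n,i)$ uniformly in $j$ and hence for $A$; the content is the same.

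One small remark on your justification that $A \mapsto \tN^{\geq k}\,\Prismbar_A\{i\}/p$ commutes with filtered colimits: \Cref{TwistedNygaard} literally describes the associated \emph{graded} pieces as cofibers of $\theta$, not the filtration stages themselves. The filtration stages commute with filtered colimits because $\tN^{\geq k}$ is the fiber of $\Prismbar_A\{i\}/p \to \Prismbar_A\{i\}/p \big/ \tN^{\geq k}$, and the quotient is a finite iterated extension of the $\mathrm{gr}^j$ for $j < k$; both $\Prismbar_A\{i\}/p$ and each $\mathrm{gr}^j$ commute with filtered colimits, hence so does the fiber. This is presumably what you meant, but the wording could be tightened.
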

\begin{proof} 
 Given a filtered diagram $\{A_i\}$ of $F$-smooth rings with 
 colimit $A$, each $A_i$ is weakly $F$-smooth by \Cref{FSmoothWeak1}; it then
 follows from \Cref{EssConstTwistNyg} that the $p$-completion of $\varinjlim_i
 \widehat{\Prismbar}_{A_i}\{j\}$ gives $\widehat{\Prismbar}_{A}\{j\}$, which easily shows that $A$ is $F$-smooth. \end{proof}

For the next result, cf.~also \cite[Prop.~9.5.11]{BLM} for the analog in
characteristic $p$.  Unlike in \emph{loc.~cit.}, we make a ($p$-complete) flatness hypothesis; we
expect that this should be unnecessary, but were unable to remove it. 
\begin{proposition} 
\label{FrobetalefilteredSegal}
Let $A$ be a $p$-quasisyntomic ring. Let $B$ be a $p$-completely flat
$A$-algebra which is relatively perfect. If $A$ is $F$-smooth, so is $B$. Moreover, the converse holds true if $B$ is $p$-completely faithfully flat over $A$.
In particular, $F$-smoothness is \'etale local and passes to \'etale algebras. 
\end{proposition}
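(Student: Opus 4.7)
The plan is to use Proposition~\ref{relativelyperfectbasechange} to identify the prismatic invariants of $B$ with those of $A$ base-changed along $A \to B$, and then separately handle the two conditions in the definition of $F$-smoothness.

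For the forward direction, suppose $A$ is $F$-smooth. Proposition~\ref{relativelyperfectbasechange} yields $p$-adic equivalences $\mathcal{N}^i \Prism_B \simeq \mathcal{N}^i \Prism_A \otimes_A^L B$ and $\Prismbar_B\{i\} \simeq \Prismbar_A\{i\} \otimes_A^L B$, compatibly with the divided Frobenius $\phi_i$ by its functoriality in the ring, so $\mathrm{fib}(\phi_i^B)$ is the $p$-completed base change of $\mathrm{fib}(\phi_i^A)$. The $p$-complete Tor-amplitude bound is preserved under arbitrary base change: any discrete $B/p$-module $N$ restricts to a discrete $A/p$-module, and $\mathrm{fib}(\phi_i^B) \otimes_B^L N \simeq \mathrm{fib}(\phi_i^A) \otimes_A^L N$, which lies in $\mathcal{D}^{\geq i+2}$ by hypothesis. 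Applying the same reasoning to the fiber of $\theta : \mathcal{N}^j \Prism/p \to \mathcal{N}^{j+p} \Prism/p$ shows $B$ is weakly $F$-smooth.

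The main obstacle is transferring Nygaard-completeness of $\Prismbar_A\{i\}$ to $\Prismbar_B\{i\}$, since this is a condition on a filtered object rather than on individual pieces. By Remark~\ref{whentwistedcomplete}, it suffices to show completeness of the twisted Nygaard filtration $\tN^{\geq \ast}\Prismbar_B\{i\}/p$. Since both $A$ and $B$ are weakly $F$-smooth, Remark~\ref{EssConstTwistNyg} tells us that both twisted Nygaard filtrations are essentially constant in each cohomological degree. Thus completeness, given essential constancy, reduces to the vanishing $H^n(\tN^{\geq j}\Prismbar_B\{i\}/p) = 0$ for each $n$ and all sufficiently large $j = j(n)$. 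Using the filtered base-change equivalence of Proposition~\ref{relativelyperfectbasechange} together with $p$-complete flatness of $A \to B$, we have
\[ H^n(\tN^{\geq j} \Prismbar_B\{i\}/p) \simeq H^n(\tN^{\geq j}\Prismbar_A\{i\}/p) \otimes_A B, \]
which vanishes because the $A$-side vanishes by the assumed Nygaard-completeness combined with essential constancy.

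For the converse, assume $B$ is $F$-smooth and $A \to B$ is $p$-completely faithfully flat. The same fiber and filtered identifications hold. Since faithful flatness reflects vanishing of cohomology (a consequence of flatness commuting with $H^n$ together with faithfulness), the $p$-complete Tor-amplitude bound on $\mathrm{fib}(\phi_i^A)$ and the vanishing of $H^n$ on high pieces of the twisted Nygaard filtration both descend from $B$ to $A$; running the above argument in reverse gives $F$-smoothness of $A$. Finally, any \'etale map of $p$-quasisyntomic rings is $p$-completely flat and relatively perfect (the relative Frobenius is an isomorphism modulo $p$ for \'etale maps), so $F$-smoothness is preserved under \'etale localization and, by the converse applied to covers, can also be checked locally in the \'etale topology.
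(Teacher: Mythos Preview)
Your proof is correct and follows essentially the same approach as the paper: both use \Cref{relativelyperfectbasechange} to base-change the relevant prismatic invariants, transfer weak $F$-smoothness directly, and then handle Nygaard-completeness by passing to the twisted Nygaard filtration mod $p$ and using $p$-complete flatness together with essential constancy (the paper phrases the latter step as ``pro-zero in any range of degrees'', which is equivalent to your eventual-vanishing formulation). One small difference: you also verify the $\mathrm{Tor}$-amplitude bound on $\mathrm{fib}(\phi_i^B)$ directly via base change, whereas the paper simply records weak $F$-smoothness and Nygaard-completeness and implicitly appeals to \Cref{FSmoothWeak1} (together with \Cref{invertingtheta}) to conclude $F$-smoothness; your route is slightly more self-contained here.
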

\begin{proof}
We have $p$-adic equivalences  $\mathcal{N}^i \Prism_A \otimes_A B \xrightarrow{\sim} \mathcal{N}^i
\Prism_B$ by \Cref{relativelyperfectbasechange}. From this, it follows that if $A$ is weakly $F$-smooth, then so is $B$; the
converse holds if $B$ is $p$-completely faithfully flat over $A$.

Next, we have that
$\tN^{\geq \ast}\frac{\Prismbar_A\left\{i\right\}}{p} \otimes_A B \to
\tN^{\geq \ast}\frac{\Prismbar_B\left\{i\right\}}{p}$ 
is an equivalence,
again by \Cref{relativelyperfectbasechange}. 
If $A$ is $F$-smooth, then 
$\tN^{\geq \ast}\frac{\Prismbar_A\left\{i\right\}}{p} $ is pro-zero in any range
of degrees, whence the same holds true for 
$\tN^{\geq \ast}\frac{\Prismbar_B\left\{i\right\}}{p} $ (by $p$-complete
flatness), whence completeness of the Nygaard filtration
(\Cref{whentwistedcomplete}); we conclude $B$ is
then $F$-smooth. 
The converse if $B$ is $p$-completely faithfully flat follows similarly. 
\end{proof}

\begin{proposition} 
\label{laurentpolySegal}
If a $p$-quasisyntomic ring $A$  is $F$-smooth, then  the polynomial ring ${A[x]}$ is also $F$-smooth.
\end{proposition}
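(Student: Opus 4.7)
The plan is to relate the prismatic and Hodge--Tate cohomology of $A[x]$ to those of $A$ via the explicit polynomial ring computations carried out in \Cref{keycofibpolypieces} and \Cref{prismbarpolyHT}, and then check that both defining conditions of $F$-smoothness propagate. As a first step, I would extend these computations from the qrsp case to arbitrary animated rings by quasisyntomic descent (and left Kan extension), obtaining a natural fiber sequence
\begin{equation*}
\widehat{\mathcal{N}^i \Prism_A \otimes_A A[x]} \longrightarrow \mathcal{N}^i \Prism_{A[x]} \longrightarrow \widehat{\mathcal{N}^{i-1} \Prism_A \otimes_A A[x]}[-1]
\end{equation*}
together with a parallel filtered fiber sequence for $\mathcal{N}^{\geq \ast}\Prismbar_{A[x]}\{i\}$ whose ``main'' pieces are $\mathcal{N}^{\geq \ast}\Prismbar_A\{i\}$ and $\mathcal{N}^{\geq \ast-1}\Prismbar_A\{i-1\}$ (each tensored with $A[x]$), supplemented by $p-1$ auxiliary quotients of the form $\mathcal{N}^{\geq \ast-1}\Prismbar_A/\mathcal{N}^{\geq \ast}\Prismbar_A$, each supported in a single graded degree.

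To verify weak $F$-smoothness of $A[x]$, I would apply the fiber of multiplication by $\theta: \mathcal{N}^{(-)}\Prism_{(-)}/p \to \mathcal{N}^{(-)+p}\Prism_{(-)}/p$ to the prismatic fiber sequence above. Since $A[x]^\wedge_p$ is $p$-completely flat over $A$, tensoring preserves $p$-complete Tor-amplitude. By weak $F$-smoothness of $A$ (which follows from $F$-smoothness via \Cref{FSmoothWeak1}), the outer terms of the resulting fiber sequence lie in $p$-complete Tor-amplitude $\geq i+1$: the first directly, and the second because $\mathrm{fib}(\theta_A^{i-1})$ has Tor-amplitude $\geq i$, which the cohomological shift $[-1]$ raises to $\geq i+1$. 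Hence the middle term $\mathrm{fib}(\theta_{A[x]}^i)$ satisfies the same bound, establishing that $A[x]$ is weakly $F$-smooth.

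For Nygaard completeness, the Hodge--Tate fiber sequence displays $\mathcal{N}^{\geq \ast}\Prismbar_{A[x]}\{i\}$ as an extension of three types of pieces: the base-changes $\widehat{\mathcal{N}^{\geq \ast}\Prismbar_A\{i\} \otimes A[x]}$ and $\widehat{\mathcal{N}^{\geq \ast-1}\Prismbar_A\{i-1\}\otimes A[x]}$, which are Nygaard-complete because the Nygaard filtration on $\Prismbar_A\{j\}$ is complete by hypothesis and $A[x]$ is flat, together with the auxiliary quotients, which are trivially complete as they are supported in a single filtration degree. Therefore $\mathcal{N}^{\geq \ast}\Prismbar_{A[x]}\{i\}$ is complete for every $i$. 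Combining weak $F$-smoothness with this Nygaard completeness, \Cref{FSmoothWeak1} yields that $A[x]$ is $F$-smooth.

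The hard part is the Hodge--Tate step: one must upgrade the cohomology computation of \Cref{prismbarpolyHT} into a genuine filtered fiber sequence valid over an arbitrary animated base $A$ (rather than only over a qrsp ring), and then verify Nygaard-completeness of the base-changed pieces $\widehat{\mathcal{N}^{\geq \ast}\Prismbar_A\{\bullet\}\otimes A[x]}$. The prismatic and Tor-amplitude portions are then essentially formal consequences of flatness of $A[x]$ over $A$ and the already-established polynomial ring calculations.
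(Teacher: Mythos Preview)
Your overall strategy coincides with the paper's: use \Cref{keycofibpolypieces} to propagate weak $F$-smoothness, use the filtered decomposition coming from \Cref{prismbarpolyHT} to analyze $\mathcal{N}^{\geq \ast}\Prismbar_{A[x]}$, and then conclude via \Cref{FSmoothWeak1}. The weak $F$-smoothness argument and the invocation of \Cref{FSmoothWeak1} at the end are fine.

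The gap is in your justification of Nygaard-completeness for the base-changed pieces. You write that $\widehat{\mathcal{N}^{\geq \ast}\Prismbar_A\{i\}\otimes_A A[x]}$ is complete ``because the Nygaard filtration on $\Prismbar_A\{j\}$ is complete by hypothesis and $A[x]$ is flat.'' This inference is not valid: completeness of a filtered object (i.e.\ vanishing of the derived inverse limit of the filtration steps) is \emph{not} preserved under tensoring with an infinite-rank free module, since $\varprojlim$ does not commute with infinite direct sums. Flatness buys you preservation of $p$-complete $\mathrm{Tor}$-amplitude (which you use correctly for weak $F$-smoothness), but it does not buy you preservation of inverse-limit vanishing.

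The paper supplies the missing ingredient, and it comes precisely from the weak $F$-smoothness of $A$ that you have already established. One passes to the \emph{twisted} Nygaard filtration $\tN^{\geq \ast}\frac{\Prismbar_A\{i\}}{p}$ of \Cref{TwistedNygaard}, whose associated graded pieces are the cofibers of $\theta$ on $\mathcal{N}^{j}\frac{\Prism_A}{p}$. By \Cref{EssConstTwistNyg}, weak $F$-smoothness of $A$ forces these cofibers to vanish in any fixed cohomological degree once $j$ is large enough; equivalently, the twisted Nygaard tower is \emph{essentially constant} (indeed pro-zero) in each cohomological degree, with constants independent of $A$. Essential constancy in each degree \emph{is} preserved under tensoring with the flat module $A[x]$, and hence the tensored tower is again complete. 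This replaces your ``complete $+$ flat $\Rightarrow$ complete'' step. You correctly flagged this as the hard part; the point is that the extra input is weak $F$-smoothness of $A$ (via \Cref{EssConstTwistNyg}), not merely completeness of the Nygaard filtration on $A$.
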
 
\begin{proof} 
Suppose $A$ is $F$-smooth. 
The weak $F$-smoothness of $A[x]$ follows 
using the cofiber sequence of $\bigoplus_{i \geq 0} \mathcal{N}^i
\Prism_A $-modules
obtained by unfolding \Cref{keycofibpolypieces},
\[ 
\left(\bigoplus_{i \geq 0} \mathcal{N}^{i-1} \Prism_A \widehat{\otimes_A} A[x]
\right)[-1] 
\to \bigoplus_{i \geq 0} \mathcal{N}^i \Prism_{A[x]} \to 
\bigoplus_{i \geq 0} \mathcal{N}^{i} \Prism_A \widehat{\otimes_A} A[x]	
.\]

By \Cref{prismbarpolyHT}, and quasisyntomic descent, we find that 
there is a finite filtration  
on $\mathcal{N}^{\geq \ast}\Prismbar_{A[x]}$ (considered as an object of the
filtered derived $\infty$-category) where the associated graded terms are given by  
the $p$-completions of 
${\mathcal{N}^{\geq \ast}\Prismbar_{A} \otimes_A A[x]}$, 
$(p-1)$ copies of 
${\mathcal{N}^{\geq \ast-1}\Prismbar_{A} \otimes_A A[x]}/
{\mathcal{N}^{\geq \ast}\Prismbar_{A} \otimes_A A[x]}$, and 
${\mathcal{N}^{\geq \ast-1}\Prismbar_{A} \otimes_A
A[x]}\left\{-1\right\}[-1]$. 
Thus, it suffices to show that 
under the $F$-smoothness hypotheses, 
$(\mathcal{N}^{\geq \ast} \Prismbar_A \otimes_A A[x])\left\{i\right\}$ is
complete mod $p$ for each $i \in \mathbb{Z}$. 
For this, it suffices to prove the analogous completeness with 
$\mathcal{N}^{\geq \ast} \Prismbar_A\left\{i\right\}$ replaced in the above
tensor product by the twisted
Nygaard filtration on the mod $p$ reduction (\Cref{TwistedNygaard}); however,
this follows from the essential 
constancy of the twisted Nygaard filtration,  \Cref{EssConstTwistNyg}. 
\end{proof} 

\begin{proposition} 
\label{formallysmoothsegal}
Let $A$ be a $p$-quasisyntomic ring, and let $B$ be a $p$-completely flat
$A$-algebra such that $A/p \to B/p$ is smooth. If $A$ is $F$-smooth, so is $B$. 
\end{proposition}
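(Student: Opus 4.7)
The plan is to reduce the general smooth case to the polynomial and relatively perfect cases, and then combine \Cref{laurentpolySegal} with \Cref{FrobetalefilteredSegal}. Since $F$-smoothness is \'etale-local on $B$ (\Cref{FrobetalefilteredSegal}) and depends only on the $p$-completion (\Cref{filtSegaldef}), I may work Zariski-locally on $\spec(B/p)$, which has the same underlying space as $\spec(\widehat{B})$. By the standard local structure of smooth morphisms of $\mathbb{F}_p$-algebras, after such a localization I can find $b_1, \ldots, b_n \in B$ so that the induced factorization
\[ A/p \to (A/p)[x_1, \ldots, x_n] \to B/p, \qquad x_i \mapsto b_i, \]
has \'etale second map. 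Setting $A' := A[x_1, \ldots, x_n]$, this lifts to a factorization $A \to A' \to B$ of discrete rings.

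Next, I would verify that each step preserves $F$-smoothness. By iterating \Cref{laurentpolySegal}, the $F$-smoothness of $A$ implies that of $A'$. The map $A' \to B$ is $p$-completely flat because $A \to A'$ is flat and $A \to B$ is $p$-completely flat. Moreover, since \'etale maps of $\mathbb{F}_p$-algebras have invertible relative Frobenius, the reduction $A'/p \to B/p$ is relatively perfect in the classical sense; by the remark following the definition of relative perfectness (which applies because $A'/p \to B/p$ is flat), it follows that $A' \to B$ is relatively perfect in the animated sense as well. Then \Cref{FrobetalefilteredSegal} applied to the $p$-completely flat, relatively perfect map $A' \to B$ yields that $B$ is $F$-smooth.

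The main obstacle I anticipate is the first step: one has to match the Zariski-local factorization through a polynomial algebra on $\spec(B/p)$ to Zariski (equivalently, \'etale) localizations of $B$ itself and lift these to genuine discrete rings. This bookkeeping is standard, but must be arranged with care so that the \'etale-local nature of $F$-smoothness from \Cref{FrobetalefilteredSegal} can legitimately be invoked; once in place, the rest is a clean concatenation of the preceding propositions.
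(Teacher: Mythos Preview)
Your approach is essentially the one the paper intends: reduce \'etale-locally to a polynomial extension followed by a relatively perfect (indeed, mod-$p$-\'etale) map, then invoke \Cref{laurentpolySegal} and \Cref{FrobetalefilteredSegal}. The paper's proof is the one-liner ``Combine \Cref{laurentpolySegal} and \Cref{FSmoothColimitEtale}'' (the latter reference standing in for the \'etale-localization statement), so you are simply spelling out the implicit details.

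One correction: your justification that $A' \to B$ is $p$-completely flat (``because $A \to A'$ is flat and $A \to B$ is $p$-completely flat'') is not valid in general---flatness of a composite does not descend to the second factor (e.g., $k \to k[x] \to k$). The correct reason is already built into your setup: you chose the factorization so that $A'/p \to B/p$ is \'etale, hence flat, which is precisely the $p$-complete flatness of $A' \to B$. With that fix, the argument goes through.
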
 
\begin{proof} 
Combine \Cref{laurentpolySegal} and 
 \Cref{FSmoothColimitEtale}.
\end{proof}

\begin{proposition} 
\label{checkbylocalization}
Let $A$ be a $p$-quasisyntomic ring. Then $A$ is $F$-smooth if and only if all
the localizations $A_{\mathfrak{p}}$ for $ \mathfrak{p} \in \spec(A)$, are $F$-smooth. 
\end{proposition}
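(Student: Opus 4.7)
The plan is to handle the two directions separately. The forward direction is formal given the stability results already established. The content is in the reverse direction, which I will approach by decomposing $F$-smoothness into weak $F$-smoothness and Nygaard completeness and checking that each is Zariski-local.

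\emph{Forward direction.} Write $A_{\mathfrak{p}} = \varinjlim_{f \notin \mathfrak{p}} A[f^{-1}]$, a filtered colimit over the multiplicative set $A \setminus \mathfrak{p}$. Each Zariski localization $A \to A[f^{-1}]$ is \'etale, hence $p$-completely faithfully flat and relatively perfect, so \Cref{FrobetalefilteredSegal} yields $F$-smoothness of each $A[f^{-1}]$, and then \Cref{FSmoothColimitEtale} passes to $A_{\mathfrak{p}}$.

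\emph{Reverse direction, step 1: weak $F$-smoothness.} Assume each $A_{\mathfrak{p}}$ is $F$-smooth. The condition of weak $F$-smoothness is that $\mathrm{fib}(\theta\colon \mathcal{N}^j \Prism_A/p \to \mathcal{N}^{j+p} \Prism_A/p)$ has $p$-complete $\mathrm{Tor}$-amplitude in degrees $\geq j+1$. By \Cref{relativelyperfectbasechange}, the Nygaard pieces $\mathcal{N}^i \Prism$ commute with base change along the relatively perfect, $p$-completely flat map $A \to A_{\mathfrak{p}}$; in particular so does the fiber in question. A $p$-complete $\mathrm{Tor}$-amplitude bound is a cohomological vanishing condition after tensoring with an arbitrary discrete $A/p$-module $N$; since any such $N$ localizes to a discrete $A_{\mathfrak{p}}/p$-module and cohomology vanishing is stalk-local, the weak $F$-smoothness of each $A_{\mathfrak{p}}$ implies the same for $A$.

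\emph{Reverse direction, step 2: Nygaard completeness.} By \Cref{whentwistedcomplete} it is enough to show completeness of the twisted Nygaard filtration on $\Prismbar_A\{i\}/p$. With weak $F$-smoothness of $A$ now in hand, \Cref{EssConstTwistNyg} gives essential constancy of this filtration in each cohomological degree. The Milnor sequence then forces the $\lim^1$ terms to vanish and identifies the cohomology of $\lim_j \tN^{\geq j}\Prismbar_A\{i\}/p$ with the corresponding stable value $H^n(\tN^{\geq j}\Prismbar_A\{i\}/p)$ for $j \gg 0$. So completeness reduces to showing these stable values vanish. By \Cref{relativelyperfectbasechange}, the twisted Nygaard pieces also commute with localization, so each stable value for $A$ localizes to the corresponding one for $A_{\mathfrak{p}}$, which vanishes by the Nygaard completeness assumed on $A_{\mathfrak{p}}$; stalkwise vanishing of cohomology then gives global vanishing.

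The main obstacle is the Nygaard-completeness step in the reverse direction, since completeness of a filtration is not \emph{a priori} a local property (inverse limits do not commute with flat base change in general). The essential constancy provided by weak $F$-smoothness is precisely what kills the $\lim^1$ obstructions and converts completeness into the vanishing of a single stable cohomology group in each degree, which \emph{is} stalk-local.
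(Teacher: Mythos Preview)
Your argument is correct and follows essentially the same route as the paper: both decompose $F$-smoothness into weak $F$-smoothness plus Nygaard completeness, observe that the Nygaard pieces and the twisted Nygaard filtration base change along $A \to A_{\mathfrak{p}}$ by \Cref{relativelyperfectbasechange}, check the $p$-complete $\mathrm{Tor}$-amplitude condition stalkwise, and reduce completeness to a stalk-local vanishing statement via the essential constancy of \Cref{EssConstTwistNyg}. The paper compresses all of this into ``similarly as in the proof of \Cref{FrobetalefilteredSegal}, noting that $p$-complete $\mathrm{Tor}$-amplitude can be checked on localizations''; your proof simply unpacks that sentence, and your use of the Milnor sequence to convert completeness into vanishing of the stable cohomology values is exactly the mechanism implicit there.

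One small correction in the forward direction: the map $A \to A[f^{-1}]$ is \'etale and hence $p$-completely \emph{flat} and relatively perfect, but it is \emph{not} faithfully flat. This does not matter, since the forward implication of \Cref{FrobetalefilteredSegal} only requires $p$-complete flatness. Alternatively, you can bypass the colimit argument entirely: $A \to A_{\mathfrak{p}}$ is itself ind-\'etale, hence $p$-completely flat and relatively perfect, so \Cref{FrobetalefilteredSegal} applies directly to it.
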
 
\begin{proof} 
If $A$ is $F$-smooth, then all of its localizations are $F$-smooth by 
\Cref{FrobetalefilteredSegal}. 
The converse direction follows similarly as in the proof of 
\Cref{FrobetalefilteredSegal}, noting that $p$-complete
$\mathrm{Tor}$-amplitude can be checked on localizations. 
\end{proof}

\begin{remark}[$F$-smoothness and the Beilinson $t$-structure]
\label{FSmoothBeilinson}
Assume $A$ is an $F$-smooth $p$-quasisyntomic ring. Write $\Prism^{[\bullet]}_A$ for the complete filtered object defined by the prismatic complex $\Prism_A$ equipped with the filtration defined by powers of the Hodge--Tate ideal sheaf, so we have a natural identification
\[ \mathrm{gr}^* \Prism^{[\bullet]}_A \simeq \Prismbar_A\{\ast\}.\] 
By definition of the Nygaard filtration, the Frobenius on $\Prism_A$ refines to a map
\[ \varphi_A:\mathcal{N}^{\geq \ast} \Prism_A \to \Prism^{[\bullet]}_A\]
in the filtered derived category. Using the connectivity bound $\mathcal{N}^i
\Prism_A \in \mathcal{D}^{\leq i}(\mathbb{Z}_p)$  
(cf.~\cite[Rem.~5.5.9]{BhattLurieAPC}), the $F$-smoothness hypothesis implies in particular that $\varphi_A$ induces an equivalence
\[ \mathcal{N}^i \Prism_A \simeq \tau^{\leq i} \mathrm{gr}^i \Prism^{[\bullet]}_A \]
As both filtrations are complete by assumption, it follows that the map
$\varphi_A$ identifies its source with the connective cover of its target for
the Beilinson $t$-structure on the filtered derived category (see
\cite[Sec.~5.4]{BMS2} for an account). 
\end{remark}

\subsection{$F$-smoothness over a base}

In this subsection, we study the $F$-smoothness condition over a perfectoid
base. We offer the following characterization; work of V.~Bouis \cite{Bouis} has studied $F$-smoothness over mixed characteristic perfectoid base rings in more detail, and yielded important examples. 
\begin{proposition}[{Cf.~\cite[Th.~2.16, 2.18]{Bouis}}] 
\label{FSmoothPerfd}
Let $R_0$ be a perfectoid ring and let $A$ be an $R_0$-algebra.
Suppose $A$ is quasisyntomic. 
Then $A$ is $F$-smooth if and only if: 
\begin{enumerate}
\item $\widehat{L_{A/R_0}} $ is a $p$-completely flat $A$-module. 
\item The $p$-completed derived de Rham cohomology ${L \Omega_{A/R_0}}$
(cf.~\cite{Bhattpadic}) is
Hodge-complete. 
\end{enumerate}
\label{segaloverperfectoid}
\end{proposition}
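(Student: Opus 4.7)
The plan is to reduce everything to the relative prismatic theory of $A$ over $\Prism_{R_0} = A_{\inf}(R_0)$, which agrees with the absolute theory since $R_0$ is perfectoid, and then exploit the relative Hodge--Tate and conjugate filtration comparisons \cite[Th.~4.11, Th.~12.2]{Prisms}. The latter identifies the divided Frobenius as $\phi_i: \mathcal{N}^i \Prism_A \xrightarrow{\sim} \mathrm{Fil}_i^{\mathrm{conj}} \Prismbar_A\{i\} \hookrightarrow \Prismbar_A\{i\}$, so $\mathrm{fib}(\phi_i)$ is the $(-1)$-shift of the cofiber $\Prismbar_A\{i\}/\mathrm{Fil}_i^{\mathrm{conj}}$. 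The former then supplies a finite filtration on this cofiber whose graded pieces are $\widehat{\wedge^j L_{A/R_0}}\{i-j\}[-j]$ for $j \geq i+1$.

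For the Tor-amplitude part of $F$-smoothness, I would track the lowest graded piece of the cofiber (at $j = i+1$): its contribution to $\mathrm{fib}(\phi_i)$ sits in cohomological degree $i+2$, and the $p$-complete Tor-amplitude bound $\geq i+2$ on $\mathrm{fib}(\phi_i)$ forces $\widehat{\wedge^{i+1} L_{A/R_0}}$ to be $p$-completely concentrated in degree $0$. Running this over all $i \geq 0$, and using that $p$-complete flatness of $\widehat{L_{A/R_0}}$ is equivalent to $p$-complete flatness of all its exterior powers, this precisely matches condition (1). Conversely, under condition (1) the Tor-amplitude bound is immediate by inspection of the above filtration.

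For the Nygaard completeness part, I would compare the Nygaard filtration on $\Prism_A\{i\}$ with the Hodge filtration on $L\Omega_{A/R_0}$. Under condition (1), the Nygaard graded pieces $\mathcal{N}^j \Prism_A$ identify via the divided Frobenius with $\widehat{\wedge^j L_{A/R_0}}[-j]$ (up to Breuil--Kisin twist), matching the Hodge graded pieces of $L\Omega_{A/R_0}$ on the nose. One then invokes the comparison, available over a perfectoid base, between Nygaard-completed prismatic cohomology and Hodge-completed derived de Rham cohomology (in the spirit of \cite{BMS2, BhattLurieAPC}), under which completeness of one filtration transfers to completeness of the other. A supplementary check is needed to include the twists, but those are $(p,I)$-adically invertible and do not affect completeness.

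The main obstacle is this last step: while the graded pieces of the two filtrations agree under condition (1), transferring the limit statement requires a careful filtered comparison between the Frobenius-twisted Nygaard filtration on $\Prism_A$ and the Hodge filtration on $L\Omega_{A/R_0}$. This is the principal technical input and must be set up using the prismatic/de Rham comparison over the perfectoid base $R_0$.
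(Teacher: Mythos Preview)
Your approach is essentially the paper's: reduce to relative prismatic cohomology over the perfect prism $\Prism_{R_0}$, identify $\phi_i$ with the inclusion $\mathrm{Fil}_i^{\mathrm{conj}} \Prismbar_A \hookrightarrow \Prismbar_A$ via \cite[Th.~12.2]{Prisms}, and read off the Tor-amplitude condition from the conjugate graded pieces $\widehat{\wedge^j L_{A/R_0}}[-j]$. One small imprecision: the Tor-amplitude bound on $\mathrm{fib}(\phi_i)$ alone does not force $\widehat{\wedge^{i+1} L_{A/R_0}}$ to sit in degree $0$; you need the bounds for both $i$ and $i+1$ simultaneously, via the fiber sequence $\widehat{\wedge^{i+1} L_{A/R_0}}[-i-1] \to \Prismbar_A/\mathrm{Fil}_i^{\mathrm{conj}} \to \Prismbar_A/\mathrm{Fil}_{i+1}^{\mathrm{conj}}$. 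In particular $i=0,1$ already suffice to get flatness of $\widehat{L_{A/R_0}}$, as the paper notes.

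The ``main obstacle'' you flag is not one: the equivalence between Nygaard-completeness of $\Prism_A$ and Hodge-completeness of $L\Omega_{A/R_0}$ is a direct citation of \cite[Th.~7.2(5)]{BMS2}, and holds unconditionally over the perfectoid base, without first assuming condition~(1). So you do not need to match graded pieces under~(1) and then transfer completeness; the filtered comparison is already packaged in that reference. With this in hand the proof is short: condition~(2) $\Leftrightarrow$ Nygaard-completeness always, and then, assuming Nygaard-completeness, the Tor-amplitude bound $\Leftrightarrow$ condition~(1) by the conjugate filtration argument.
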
 
\begin{proof} 
The divided Frobenius $\phi_i: \mathcal{N}^i \Prismnc_A \to
\Prismbar_A$ (where we trivialize the Breuil--Kisin twists since we are
over $R_0$) matches the source with the $i$th stage of the conjugate filtration 
(cf.~\cite[Th.~4.11]{Prisms})
on the Hodge--Tate
cohomology, \cite[Th.~12.2]{Prisms}.

Now, the condition (2) that the $p$-completed derived de Rham cohomology is
Hodge-complete is equivalent to the condition that the derived prismatic
cohomology $\Prismnc_A$ (over the perfect prism corresponding to $R_0$) is
Nygaard-complete, thanks to \cite[Th.~7.2(5)]{BMS2}. 

Therefore, once one knows the derived prismatic cohomology is Nygaard-complete,
the $F$-smoothness condition amounts to the statement that 
the conjugate filtration map 
$\mathrm{Fil}_i \Prismbar_A \to \Prismbar_A$ has homotopy fiber
(in $\mathcal{D}(A)$) with $p$-complete $\mathrm{Tor}$-amplitude in degrees $\geq i+2$, for
each $i$. Using the associated gradeds of the conjugate filtration (given by
$\mathrm{gr}^j = \widehat{\wedge^j L_{A/R_0}}[-j]$), one easily
	sees by considering $i =0, 1$ that this is equivalent to the condition that $\widehat{L_{A/R_0}}$ should
be $p$-completely flat over $A$. 
\end{proof}

In the special case of quasisyntomic $\mathbb{F}_p$-algebras, the condition of $F$-smoothness had been previously studied under the name Cartier smoothness
\cite{KM21, KST21}
which we review next.\footnote{The second author had previously asked in \cite[Question 4.21]{survey} 
whether there could be a notion of Cartier smoothness in mixed characteristic;
we are also grateful to Matthew Morrow for discussions on this point.}

\begin{definition}[Cf.~\cite{KM21, KST21}] 
Let $A$ be a quasisyntomic $\mathbb{F}_p$-algebra. 
We say that $A$ is \emph{Cartier smooth} if: 
\begin{enumerate}
\item The cotangent complex $L_{A/\mathbb{F}_p}$ is a flat discrete $A$-module.  
\item The inverse Cartier map $C^{-1}: \Omega^{i}_{A/\mathbb{F}_p} \to H^i(
\Omega^{\ast}_{A/\mathbb{F}_p})$ is an isomorphism for $i \geq 0$. 
Here $\Omega^{\ast}_{A/\mathbb{F}_p}$ denotes the classical de Rham complex of
$A$ over $\mathbb{F}_p$. 
\end{enumerate}
\end{definition}

\begin{proposition} 
\label{FSmoothCartSmooth}
Let $A$ be a quasisyntomic $\mathbb{F}_p$-algebra. 
Then $A$ is $F$-smooth  if and only if
$A$ is {Cartier smooth}. 
\end{proposition}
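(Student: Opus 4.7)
The plan is to invoke \Cref{FSmoothPerfd} with the perfectoid base $R_0 = \mathbb{F}_p$, and then translate the two resulting conditions directly into those of Cartier smoothness. Since $A$ is an $\mathbb{F}_p$-algebra, $p$-completions are trivial, and \Cref{FSmoothPerfd} says that $A$ is $F$-smooth if and only if (a) $L_{A/\mathbb{F}_p}$ is a flat $A$-module and (b) $L\Omega_{A/\mathbb{F}_p}$ is Hodge complete. As the quasisyntomic hypothesis forces $L_{A/\mathbb{F}_p}$ to lie in cohomological degrees $[-1, 0]$, condition (a) is equivalent to $L_{A/\mathbb{F}_p}$ being discrete and flat, which is exactly condition (1) of Cartier smoothness.

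Assuming (a)/(1), I would then compare the derived and classical de Rham complexes via the natural (left Kan extension) map $L\Omega_{A/\mathbb{F}_p} \to \Omega^\bullet_{A/\mathbb{F}_p}$. Under (a) we have $\wedge^i L_{A/\mathbb{F}_p} = \Omega^i_{A/\mathbb{F}_p}$ for all $i$, so this natural map is an isomorphism on Hodge graded pieces (both equal to $\Omega^i_{A/\mathbb{F}_p}[-i]$). Moreover, the classical de Rham complex is tautologically Hodge complete (its stupid filtration stabilizes cohomological degree by cohomological degree), so the Hodge completion of $L\Omega_{A/\mathbb{F}_p}$ is identified with $\Omega^\bullet_{A/\mathbb{F}_p}$. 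Consequently, condition (b) is equivalent to the natural map $L\Omega_{A/\mathbb{F}_p} \xrightarrow{\sim} \Omega^\bullet_{A/\mathbb{F}_p}$ being an isomorphism.

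Finally, I connect this comparison to the inverse Cartier map via the conjugate filtrations on both sides. The conjugate filtration on $L\Omega_{A/\mathbb{F}_p}$ has associated graded $\wedge^i L_{A/\mathbb{F}_p}[-i] = \Omega^i_{A/\mathbb{F}_p}[-i]$ (using (a)), while the Postnikov filtration on $\Omega^\bullet_{A/\mathbb{F}_p}$ has graded $H^i(\Omega^\bullet_{A/\mathbb{F}_p})[-i]$. Tracing through the classical construction of the conjugate filtration on derived de Rham, the natural map respects these filtrations and induces the inverse Cartier map $C^{-1}: \Omega^i_{A/\mathbb{F}_p} \to H^i(\Omega^\bullet_{A/\mathbb{F}_p})$ on the $i$-th graded piece. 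Since each graded piece is concentrated in a single cohomological degree and both filtrations are exhaustive, the natural map $L\Omega \to \Omega^\bullet$ is an isomorphism if and only if $C^{-1}$ is an isomorphism for all $i \geq 0$, which is condition (2) of Cartier smoothness.

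The main technical obstacle is the identification of the map induced by $L\Omega \to \Omega^\bullet$ on conjugate graded pieces with the classical inverse Cartier map; this requires unwinding the construction of the conjugate filtration on derived de Rham cohomology (following Illusie) and is the most substantive step. The remaining parts---applying \Cref{FSmoothPerfd}, comparing Hodge graded pieces, and handling completeness of the filtrations---are then routine bookkeeping.
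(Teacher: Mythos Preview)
Your proposal is correct and follows essentially the same approach as the paper: invoke \Cref{segaloverperfectoid} with $R_0 = \mathbb{F}_p$, identify the Hodge completion of $L\Omega_{A/\mathbb{F}_p}$ with the classical de Rham complex $\Omega^\bullet_{A/\mathbb{F}_p}$ under the flatness assumption, and then use the conjugate filtration to translate Hodge-completeness into the Cartier isomorphism condition. The paper's proof is more terse (citing \cite[Prop.~3.5]{Bhattpadic} for the conjugate filtration step rather than spelling out the comparison with the Postnikov filtration), but the logical structure is identical.
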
 
\begin{proof} 
Suppose $L_{A/\mathbb{F}_p}$ is a flat $A$-module. 
Then the derived de Rham cohomology $L \Omega_{A/\mathbb{F}_p}$ maps to its
Hodge completion, which is just the usual algebraic de Rham complex
$\Omega^\bullet_{A/\mathbb{F}_p}$. Using the conjugate filtration on 
the former \cite[Prop.~3.5]{Bhattpadic}, we see that 
the condition that this map should be an
equivalence is precisely the Cartier isomorphism condition. 
Therefore, the result follows from 
\Cref{segaloverperfectoid}. 
\end{proof}

\subsection{$F$-smoothness of regular rings}

In this subsection, we prove the following theorem.

\begin{theorem} 
\label{regularringsSegal}
Let $A$ be a regular (noetherian) ring. Then $A$ is $F$-smooth. 
Conversely, if $A$ is a $p$-complete noetherian ring which is $F$-smooth, then $A$ is regular. 
\end{theorem}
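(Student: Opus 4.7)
The plan is to reduce both implications to the equal-characteristic case (where regularity, Cartier smoothness, and $F$-smoothness all coincide by Kunz's theorem and \Cref{FSmoothCartSmooth}) and then to transfer between $A$ and $A/p$ using the quotient-by-$p$ fiber sequences developed in the previous subsection. By \Cref{checkbylocalization} and the fact that $F$-smoothness depends only on the $p$-completion, we may assume $A$ is a $p$-complete noetherian local ring throughout. For the forward direction, the case where $p$ is invertible is trivial, and the case where $A$ is an $\mathbb{F}_p$-algebra follows from the classical Cartier isomorphism together with the flatness of $L_{A/\mathbb{F}_p}$ for regular noetherian $\mathbb{F}_p$-algebras (the latter being equivalent to Kunz's flatness of Frobenius).

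The main case is mixed characteristic, where $p \in \mathfrak{m}$ is a nonzerodivisor. Then $A/p$ is a regular noetherian $\mathbb{F}_p$-algebra, hence $F$-smooth by the previous step, and we bootstrap $F$-smoothness to $A$ as follows. Applying \Cref{thicksubcatlemma2} with $R = A$ and $x = p$ gives a finite filtration on $\mathcal{N}^{\geq \ast}\Prismbar_A/p\left\{i\right\}$ whose associated graded terms are built from the Nygaard-filtered objects $\mathcal{N}^{\geq \ast}\Prismbar_{A/p}\left\{\bullet\right\}$; since these are complete by $F$-smoothness of $A/p$, so is $\mathcal{N}^{\geq \ast}\Prismbar_A/p\left\{i\right\}$, which promotes to completeness of $\mathcal{N}^{\geq \ast}\Prismbar_A\left\{i\right\}$ via derived $p$-completeness. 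Running the analogous argument with \Cref{Nygaardpiecespolynomialquotient}, which expresses $\mathcal{N}^j\Prism_A/p$ as the fiber of a map between Nygaard pieces of $A/p$, and carefully propagating the action of $\theta$ through this fiber sequence, transfers the Tor-amplitude bound on $\mathrm{fib}(\theta)$ from $A/p$ to $A$. This yields weak $F$-smoothness, whence \Cref{FSmoothWeak1} gives that $A$ is $F$-smooth.

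For the converse, $F$-smoothness of $A$ combined with the conjugate filtration on $\Prismbar_{A/p}$ (whose associated graded involves wedge powers of $L_{A/p/\mathbb{F}_p}$) forces $L_{A/p/\mathbb{F}_p}$ to be a flat discrete $A/p$-module; this gives Cartier smoothness of $A/p$ (via the inverse Cartier isomorphism built into the comparison between the Nygaard and conjugate filtrations), and hence regularity by Kunz's theorem. Together with $p$-torsionfreeness of $A$ (which follows from the quasisyntomicity hypothesis giving bounded $p$-torsion combined with the degree-zero control afforded by $F$-smoothness and Nygaard-completeness), the standard commutative algebra fact that a $p$-complete noetherian ring with $p$ a nonzerodivisor and regular reduction mod $p$ is itself regular completes the proof. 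The main obstacle is the mixed-characteristic bootstrap in the forward direction: while the filtrations from \Cref{thicksubcatlemma2} and \Cref{Nygaardpiecespolynomialquotient} supply the formal scaffolding, transferring the Tor-amplitude condition rigorously requires the essential constancy of the twisted Nygaard filtration (\Cref{EssConstTwistNyg}) to control the resulting inverse-limit behavior without losing the required connectivity bounds.
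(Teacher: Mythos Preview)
Your forward direction is essentially the paper's own argument, run with the specific choice $x = p$; this is exactly the content of \Cref{Fsmoothquotientregular} plus its corollary for $x=p$, and the completeness and Tor-amplitude transfers you describe are precisely the two steps in the proof of \Cref{Fsmoothquotientregular}. (The paper's stated proof instead inducts on Krull dimension with an arbitrary $x \in \mathfrak{m}\setminus\mathfrak{m}^2$, but either route works.)

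The converse direction, however, has genuine gaps. First, you assert that $F$-smoothness of $A$ forces $L_{(A/p)/\mathbb{F}_p}$ to be flat and the inverse Cartier map for $A/p$ to be an isomorphism, but you give no mechanism for this. The fiber sequences of \Cref{Nygaardpiecespolynomialquotient} and \Cref{thicksubcatlemma2} let one \emph{build up} the invariants of $A$ from those of $A/x$, not the other way around: knowing that the fiber $(\mathcal{F}^i_A)/p$ of $\mathcal{F}^i_{A/p}\to\mathcal{F}^{i-1}_{A/p}$ has the desired Tor-amplitude does not let you conclude the same for the individual terms $\mathcal{F}^j_{A/p}$ with the correct $j$-dependent shift. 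Second, your argument for $p$-torsionfreeness is only a gesture; consider $A=\mathbb{Z}/p^2$, which is $p$-complete noetherian local and $p$-quasisyntomic with $A/p = \mathbb{F}_p$ regular, yet is neither $p$-torsionfree nor regular. Your proposed argument must rule this out, and ``degree-zero control afforded by $F$-smoothness'' does not visibly do so.

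The paper takes a different route for the converse. It first observes that $p$-quasisyntomicity gives $L_{A/\mathbb{Z}}\otimes_A k \in \mathcal{D}^{[-1,0]}(k)$, whence $A$ is a complete intersection by a theorem of Avramov. It then proves a sharp criterion (\Cref{Fsmoothweakgivesregular}): for a CI local ring with residue field $k$ of characteristic $p$, regularity is equivalent to $\mathrm{cofib}(\theta\colon A/p\to\mathcal{N}^p\Prism_A/p)\otimes_{A/p}^{\mathbb{L}} k$ lying in $\mathcal{D}^{\geq 1}(k)$. The proof constructs, via the Nygaard fiber sequence and the Sen operator eigenvalues, a natural map $\Gamma^p H^{-1}(L_{A/\mathbb{Z}}\otimes_A k)\to k$ whose injectivity is equivalent to the displayed condition; naturality in $A$ (comparing with $A=k$) then forces $H^{-1}(L_{A/\mathbb{Z}}\otimes_A k)\to H^{-1}(L_{k/\mathbb{Z}})$ to be injective, which is a cotangent-complex criterion for regularity of CI local rings (\Cref{regularitycrit}). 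This argument works uniformly without separately establishing $p$-torsionfreeness or descending $F$-smoothness to $A/p$.
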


We first prove the forward direction. When $A$ is an $\mathbb{F}_p$-algebra,
$F$-smoothness is equivalently to Cartier smoothness (\Cref{FSmoothCartSmooth})
and thus follows at once from regularity via N\'eron--Popescu desingularization, which implies that $A$
is a filtered colimit of smooth $\mathbb{F}_p$-algebras. One can also prove the
result directly \cite[Sec.~9.5]{BLM}. 
In the case of an \emph{unramified} regular ring, most of the result also
appears in \cite[Prop.~5.7.9, 5.8.2]{BhattLurieAPC}.

\begin{proposition} 
\label{Fsmoothquotientregular}
Let $A$ be a $p$-quasisyntomic ring, and let $x \in A$ be a nonzerodivisor. 
Suppose $A/x$ and $A[1/x]$ are $F$-smooth. 
Then $A$ is $F$-smooth. 
\end{proposition}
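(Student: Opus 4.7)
The plan is to verify both conditions defining $F$-smoothness of $A$: (i) the $p$-complete Tor-amplitude bound equivalent to weak $F$-smoothness (\Cref{FSmoothWeak1}), and (ii) Nygaard completeness of $\mathcal{N}^{\geq \ast}\Prismbar_A\{i\}$. For both, the strategy is to glue information modulo $x$ (controlled by $F$-smoothness of $A/x$) with information after inverting $x$ (controlled by $F$-smoothness of $A[1/x]$ together with étale base change, \Cref{FrobetalefilteredSegal}).

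For weak $F$-smoothness, view $A$ as a $\mathbb{Z}[x]$-algebra. Reducing the fiber sequence of \Cref{Nygaardpiecespolynomialquotient} mod $p$ and taking $\theta$-fibers yields, for each $i$, a fiber sequence of $A/x$-modules
\[ F_i(A)/x \to F_i(A/x) \to F_{i-1}(A/x), \]
where $F_i(B) := \mathrm{fib}(\theta : \mathcal{N}^i \Prism_B/p \to \mathcal{N}^{i+p} \Prism_B/p)$. Weak $F$-smoothness of $A/x$ bounds the Tor-amplitudes of $F_i(A/x)$ and $F_{i-1}(A/x)$, and a long exact sequence chase then gives $F_i(A)/x \in \mathcal{D}^{\geq i+1}$ in the $p$-complete Tor-amplitude sense over $A/x$. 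To upgrade to the desired bound on $F_i(A)$ over $A$, I test $F_i(A) \otimes_A^L N$ against a general discrete $A/p$-module $N$: the triangle $N \to N[1/x] \to N[1/x]/N$ decomposes the problem into an $A[1/x]$-module part (handled by weak $F$-smoothness of $A[1/x]$ and \Cref{FrobetalefilteredSegal}) and an $x$-power torsion part, which is built from $A/x$-modules and hence controlled by the bound on $F_i(A)/x$.

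For Nygaard completeness, apply \Cref{thicksubcatlemma2} to $A$ as a $\mathbb{Z}[x]$-algebra: $\mathcal{N}^{\geq \ast}\Prismbar_A\{i\}/x$ admits a natural finite filtration with associated gradeds $\mathcal{N}^{\geq \ast}\Prismbar_{A/x}\{i\}$, $(p-1)$ copies of a shifted sub-quotient, and $\mathcal{N}^{\geq \ast-1}\Prismbar_{A/x}\{i-1\}[-1]$. Each is complete by Nygaard completeness for $A/x$, so their finite extension $\mathcal{N}^{\geq \ast}\Prismbar_A\{i\}/x$ is complete; combined with completeness of $\mathcal{N}^{\geq \ast}\Prismbar_{A[1/x]}\{i\}$ via \Cref{FrobetalefilteredSegal}, it remains to deduce completeness of $\mathcal{N}^{\geq \ast}\Prismbar_A\{i\}$ itself. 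This last step is the main obstacle: setting $L = \lim_j \mathcal{N}^{\geq j}\Prismbar_A\{i\}$, commutation of $\lim$ with the cofiber sequence for multiplication by $x$ gives $L/x = 0$ (so $x$ acts invertibly on $L$), and the localization map factors $L$ through $\lim_j \mathcal{N}^{\geq j}\Prismbar_{A[1/x]}\{i\} = 0$. I plan to conclude $L = 0$ by a Beauville--Laszlo descent along $A \to A[1/x] \times \widehat{A}_x$, exploiting $\widehat{L}_x = 0$, combined with the essential constancy of the twisted Nygaard filtration (\Cref{EssConstTwistNyg}) guaranteed by the weak $F$-smoothness from Step 1, which identifies Nygaard completeness mod $p$ with vanishing of the stabilized cohomology of $\tN^{\geq \ast}\Prismbar_A\{i\}/p$ in each degree --- a vanishing that then follows from the mod-$x$ and $[1/x]$-completeness already established.
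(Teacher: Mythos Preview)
Your plan matches the paper's proof closely: same two-step structure (weak $F$-smoothness first, then Nygaard completeness), same key inputs (\Cref{Nygaardpiecespolynomialquotient} for the graded pieces, \Cref{thicksubcatlemma2} for the filtration mod $x$, \Cref{FrobetalefilteredSegal} for localization at $x$), and the same use of essential constancy (\Cref{EssConstTwistNyg}) from the already-proven weak $F$-smoothness to control the interaction of the Nygaard filtration with $[1/x]$.

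The one place your write-up is genuinely incomplete is the final step for Nygaard completeness. You correctly observe that $L/x=0$ forces $x$ to act invertibly on $L$, and you note that $L$ maps to $\lim_j \mathcal{N}^{\geq j}\Prismbar_{A[1/x]}\{i\}=0$. But a map $L\to 0$ does not by itself give $L=0$; what you actually need is $L[1/x]=0$, i.e.\ that inverting $x$ commutes with the inverse limit defining $L$. Your gesture towards Beauville--Laszlo is a red herring here. The paper's route is cleaner and avoids this detour: rather than working with $L$, it looks directly at the comparison map $\Prismbar_A\{i\}\to\widehat{\Prismbar}_A\{i\}$ and shows it is an isomorphism after both $/x$ and $[1/x]$. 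The $/x$ case is exactly your \Cref{thicksubcatlemma2} argument. For $[1/x]$, the paper uses weak $F$-smoothness of $A$ and \Cref{EssConstTwistNyg} to prove that $\widehat{\Prismbar}_A\{i\}[1/x]\simeq\widehat{\Prismbar}_{A[1/x]}\{i\}$ (essential constancy in each degree makes the filtered colimit $[1/x]$ commute with the inverse limit), whence the comparison map after $[1/x]$ identifies with the one for $A[1/x]$, which is an isomorphism by hypothesis. This is precisely the missing justification your sketch needs; once you supply it, your argument and the paper's coincide.
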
 

\begin{proof} 
First, we show that $A$ is weakly $F$-smooth. 
Write $\mathcal{F}^i_A = \mathrm{fib}( \theta: \mathcal{N}^i \frac{\Prism_A}{p}
\to \mathcal{N}^{i+p} \frac{\Prism_A}{p})$. 
Using the cofiber sequence of \Cref{Nygaardpiecespolynomialquotient},  we find that 
there is a cofiber sequence
$(\mathcal{F}^i_A ) /x \to \mathcal{F}^{i}_{A/x} \to \mathcal{F}^{i-1}_{A/x}$. 
Moreover, $\mathcal{F}^i_{A[1/x]} =(\mathcal{F}^i_A)[1/x]$. 
Note that 
an object $N \in \mathcal{D}(A)$ has $p$-complete $\mathrm{Tor}$-amplitude in degrees $\geq j$
if and only if $N[1/x] \in \mathcal{D}(A), N/x \in \mathcal{D}(A/x)$
have $p$-complete $\mathrm{Tor}$-amplitude in degrees $\geq j$. 
From these observations, it follows easily that $A$ is weakly $F$-smooth. 

Now we show that $A$ is $F$-smooth. 
For this, it suffices to show that the map
\begin{equation} \Prismbar_{A}\left\{i\right\} \to
\widehat{\Prismbar}_A\left\{i\right\} \label{compmapHTA} \end{equation} is an
equivalence for each $i$; here the latter denotes the Nygaard-completed
Hodge--Tate cohomology. 
By weak $F$-smoothness of $A$, the natural map 
$\widehat{\Prismbar}_A\left\{i\right\}[1/x] \to 
\widehat{\Prismbar}_{A[1/x]}\left\{i\right\}$
is an equivalence, thanks to \Cref{EssConstTwistNyg}. 
Therefore, by our assumptions, the comparison map \eqref{compmapHTA} becomes an
isomorphism after $p$-completely inverting $x$, so its fiber mod $p$ is $x$-power torsion. It thus
suffices to show that \eqref{compmapHTA} induces an isomorphism after
base-change along $A \to A/x$. But by \Cref{thicksubcatlemma2} and our assumption of
$F$-smoothness of $A/x$, the filtered object
$\mathcal{N}^{\geq \ast} \Prismbar_{A} / x$ is complete. 
\end{proof}

\begin{corollary} 
Let $A$ be a $p$-quasisyntomic ring such that $A$ is $p$-torsionfree and such
that the $\mathbb{F}_p$-algebra $A/p$ is Cartier smooth. Then $A$ is $F$-smooth. 
\end{corollary}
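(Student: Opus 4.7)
The plan is to apply \Cref{Fsmoothquotientregular} with the nonzerodivisor $x = p$, which is indeed a nonzerodivisor since $A$ is assumed $p$-torsionfree. To do this, I need to verify that both $A/p$ and $A[1/p]$ are $F$-smooth.

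For $A/p$: By hypothesis, $A/p$ is a Cartier smooth $\mathbb{F}_p$-algebra. I should first confirm that $A/p$ is (quasi)syntomic so that \Cref{FSmoothCartSmooth} applies; this follows from the cofiber sequence
\[ L_{A/\mathbb{Z}} \otimes^{\mathbb{L}}_A A/p \to L_{(A/p)/\mathbb{Z}} \to L_{(A/p)/A} \]
combined with the $p$-torsionfreeness of $A$ (which implies $L_{(A/p)/A} \simeq (A/p)[1]$) and the fact that $A$ is $p$-quasisyntomic. Then \Cref{FSmoothCartSmooth} immediately gives $F$-smoothness of $A/p$.

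For $A[1/p]$: This is a $\mathbb{Z}[1/p]$-algebra, so its $p$-adic completion vanishes. Since the prismatic and Nygaard-filtered objects appearing in the definition of $F$-smoothness are all $p$-complete, they vanish for $A[1/p]$, and the conditions of \Cref{filtSegaldef} are vacuously satisfied (using the remark in \Cref{filtSegaldef} that $F$-smoothness only depends on the $p$-completion). The ring $A[1/p]$ is trivially $p$-quasisyntomic since both its $p$-power torsion and $A[1/p]/p = 0$ are trivial.

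With these two verifications in hand, \Cref{Fsmoothquotientregular} applied to $x = p$ concludes that $A$ is $F$-smooth. There is no substantive obstacle here; the entire content of the corollary is the reduction to the equal-characteristic Cartier-smooth case (handled by \Cref{FSmoothCartSmooth}) and the generic-fiber triviality, both combined through the descent-type statement of \Cref{Fsmoothquotientregular}.
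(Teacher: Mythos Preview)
Your proof is correct and follows exactly the paper's approach: the paper's proof is the single line ``Apply \Cref{Fsmoothquotientregular} with $x = p$,'' and you have simply spelled out the verifications (nonzerodivisor, $A/p$ quasisyntomic so \Cref{FSmoothCartSmooth} applies, $A[1/p]$ trivially $F$-smooth) that make this application go through.
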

\begin{proof} 
Apply \Cref{Fsmoothquotientregular} with $x = p$. 
\end{proof}

\begin{proof}[Proof that regular rings are $F$-smooth] 
Suppose $A$ is regular. 
Since $A$ is lci,  $A$ is $p$-quasisyntomic. 
By 
\Cref{checkbylocalization}, the ring $A$ is  $F$-smooth if and only if all of
its localizations are $F$-smooth.  Consequently, we may assume that $A$ is local
with maximal ideal $\mathfrak{m} \subset A$
and in particular of finite Krull dimension. By induction on the Krull dimension, we may
assume that any regular ring of smaller Krull dimension (e.g., any further
localization of $A$) is $F$-smooth. 
If $A$ is zero-dimensional and hence a field, 
then we already know that $A$ is $F$-smooth: more generally, any regular ring in
characteristic $p$ is Cartier smooth and hence $F$-smooth. 
So suppose $\mathrm{dim}(A) > 0$. 
Choose $x \in \mathfrak{m} \setminus \mathfrak{m}^2$; then $A[1/x]$ and $A/x$
are $F$-smooth by 
induction on the dimension.
By 
\Cref{Fsmoothquotientregular}, it follows that $A$ is $F$-smooth. 
\end{proof}

For the proof that $F$-smoothness implies regularity, we will actually need much
less than $F$-smoothness itself. We expect that the result is related to 
recent works relating regularity to $p$-derivations \cite{HochsterJeffries,
Saito}. 

\begin{lemma} 
\label{regularitycrit}
Let $(A, \mathfrak{m}, k)$ be a complete intersection local ring. Then $A$ is
regular if and only if the map of $k$-vector spaces $H^{-1}( L_{A/\mathbb{Z}} \otimes_A k) \to H^{-1}(
L_{k/\mathbb{Z}})$ 
is injective. 
\end{lemma}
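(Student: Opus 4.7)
The plan is to use the Jacobi-Zariski fiber sequence for $\mathbb{Z} \to A \to k$, namely
\[ L_{A/\mathbb{Z}} \otimes_A^L k \to L_{k/\mathbb{Z}} \to L_{k/A}, \]
whose long exact sequence shows that the displayed map is injective iff the boundary $H^{-2}(L_{k/A}) \to H^{-1}(L_{A/\mathbb{Z}} \otimes_A^L k)$ vanishes. I first reduce to the complete case: since $A \to \widehat{A}$ is flat with $\widehat{A} \otimes_A k = k$, derived base change gives $L_{\widehat{A}/A} \otimes_{\widehat{A}}^L k \simeq L_{k/k} = 0$, so the transitivity triangle for $\mathbb{Z} \to A \to \widehat{A}$ tensored with $k$ yields $L_{A/\mathbb{Z}} \otimes_A^L k \simeq L_{\widehat{A}/\mathbb{Z}} \otimes_{\widehat{A}}^L k$; as $A$ is regular iff $\widehat{A}$ is, we may assume $A$ is complete. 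By the Cohen structure theorem and the complete intersection hypothesis, write $A = R/(f_1, \dots, f_r)$ with $R$ a complete regular local ring of dimension $n$ and $(f_1, \dots, f_r)$ a regular sequence; by minimizing the presentation we may take $f_i \in \mathfrak{m}_R^2$, so that $r = \mathrm{embdim}(A) - \dim A$ and $A$ is regular iff $r=0$.

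The regular-sequence assumption gives $L_{A/R} \simeq A^r[1]$, and regularity of $R$ gives $L_{k/R} \simeq k^n[1]$. Applying transitivity for $\mathbb{Z} \to R \to A$ and $\mathbb{Z} \to R \to k$ and tensoring with $k$ yields two fiber sequences:
\[ L_{R/\mathbb{Z}} \otimes_R^L k \to L_{A/\mathbb{Z}} \otimes_A^L k \to k^r[1], \qquad L_{R/\mathbb{Z}} \otimes_R^L k \to L_{k/\mathbb{Z}} \to k^n[1]. \]
In each, the boundary into $H^0(L_{R/\mathbb{Z}} \otimes_R^L k) = \Omega^1_{R/\mathbb{Z}} \otimes_R k$ sends canonical generators to $df_i$ and $dx_j$ respectively (for a regular system of parameters $x_j$ of $R$). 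Since $f_i \in \mathfrak{m}_R^2$, the first boundary vanishes, producing a short exact sequence
\[ 0 \to H^{-1}(L_{R/\mathbb{Z}} \otimes_R^L k) \to H^{-1}(L_{A/\mathbb{Z}} \otimes_A^L k) \to k^r \to 0, \]
while the second yields merely the injection $H^{-1}(L_{R/\mathbb{Z}} \otimes_R^L k) \hookrightarrow H^{-1}(L_{k/\mathbb{Z}})$.

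The map $A \to k$ induces a morphism from the first fiber sequence to the second; its right edge $k^r[1] \to k^n[1]$ comes from $L_{A/R} \otimes_A^L k \to L_{k/R}$, which on $H^{-1}$ is the map $I/I^2 \otimes_A k \to \mathfrak{m}_R/\mathfrak{m}_R^2$, $[f_i] \mapsto [f_i]$, and vanishes because $f_i \in \mathfrak{m}_R^2$. A short diagram chase in the resulting commutative diagram of long exact sequences then shows that $\psi : H^{-1}(L_{A/\mathbb{Z}} \otimes_A^L k) \to H^{-1}(L_{k/\mathbb{Z}})$ lands in the subspace $H^{-1}(L_{R/\mathbb{Z}} \otimes_R^L k)$ and restricts to the identity on the canonical subspace of the source, so $\ker(\psi)$ is a complement to this subspace and is isomorphic to the quotient $k^r$. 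Hence $\psi$ is injective iff $r = 0$ iff $A$ is regular, yielding both directions simultaneously. The main technical point will be to set up the morphism of fiber sequences correctly via functoriality of the cotangent complex along $A \to k$ and to identify the two right-edge boundaries as vanishing because $f_i \in \mathfrak{m}_R^2$; the identification $\ker(\psi) \cong k^r$ then follows from a routine four-term diagram chase.
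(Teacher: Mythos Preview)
Your argument is correct and self-contained. The paper's proof is much terser: it uses the same transitivity triangle for $\mathbb{Z} \to A \to k$, observes that since $L_{k/\mathbb{Z}} \in \mathcal{D}^{[-1,0]}$ the injectivity condition is equivalent to $H^{-2}(L_{k/A}) = 0$, and then simply cites \cite[Prop.~8.12]{Iyengar07} for the equivalence of this vanishing with regularity under the complete intersection hypothesis.

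So the difference is that you unwind the cited result by hand. Your reduction to the complete case and explicit Cohen presentation $A = R/(f_1,\dots,f_r)$ with $f_i \in \mathfrak{m}_R^2$, together with the comparison of the two transitivity triangles for $\mathbb{Z}\to R\to A$ and $\mathbb{Z}\to R\to k$, yields the clean splitting $H^{-1}(L_{A/\mathbb{Z}}\otimes_A^L k) \cong H^{-1}(L_{R/\mathbb{Z}}\otimes_R^L k) \oplus \ker(\psi)$ with $\ker(\psi)\cong k^r$. The paper's route is shorter on the page but depends on an external reference; yours is longer but elementary and makes the numerology $r = \mathrm{embdim}(A) - \dim A$ completely transparent. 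Both approaches ultimately hinge on the same transitivity triangle and the observation that $f_i \in \mathfrak{m}_R^2$ forces the relevant boundary maps to vanish.
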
 
\begin{proof} 
We have a transitivity triangle (for $\mathbb{Z} \to A \to k$), 
$L_{A/\mathbb{Z}} \otimes_A k  \to L_{k/\mathbb{Z}} \to L_{k/A}$, and
$L_{k/\mathbb{Z}}$ is concentrated in degrees $[-1, 0]$. 
Thus, the injectivity condition of the lemma is equivalent to the statement that
$H^{-2}(L_{k/A}) =0$, whence the result by \cite[Prop.~8.12]{Iyengar07}. 
\end{proof} 
\begin{proposition} 
\label{Fsmoothweakgivesregular}
Let $A$ be a 
complete intersection local noetherian ring with residue field $k$ of characteristic $p$. 
Then the following are equivalent: 
\begin{enumerate}
\item $A$ is regular.  
\item
$\mathrm{cofib}( \theta: A/p \to \mathcal{N}^p \Prism_A/p)
\otimes^{\mathbb{L}}_{A/p}
k \in \mathcal{D}^{\geq 1}(k)$ (e.g., this holds if $A$ is $F$-smooth by
\Cref{FSmoothWeak1} and its proof). 
\end{enumerate}
\end{proposition}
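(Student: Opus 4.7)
The plan is to treat the two directions separately.

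\textbf{Direction (1) $\Rightarrow$ (2).} This is immediate from what has already been established in this section. If $A$ is regular, then $A$ is $F$-smooth by the forward direction of \Cref{regularringsSegal}, and hence weakly $F$-smooth by \Cref{FSmoothWeak1}. Specializing the defining condition of weak $F$-smoothness to $i=0$, the fiber of $\theta: A/p \to \mathcal{N}^p \Prism_A/p$ has $p$-complete $\mathrm{Tor}$-amplitude in degrees $\geq 1$, so its base change to $k$ lies in $\mathcal{D}^{\geq 1}(k)$, and therefore the cofiber lies in $\mathcal{D}^{\geq 2}(k) \subseteq \mathcal{D}^{\geq 1}(k)$.

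\textbf{Direction (2) $\Rightarrow$ (1).} By \Cref{regularitycrit}, the regularity of the complete intersection local ring $A$ is equivalent to injectivity of the $k$-linear map $H^{-1}(L_{A/\mathbb{Z}} \otimes^L_A k) \to H^{-1}(L_{k/\mathbb{Z}})$. It thus suffices to extract this injectivity from condition (2). The plan relies on two identifications. First, by \eqref{grTwistedNygaard} applied with $i=0, j=p$, the cofiber of $\theta: \mathcal{N}^0 \Prism_A/p \to \mathcal{N}^p \Prism_A/p$ identifies (up to Frobenius twist on the source, via \eqref{gr0prismbar}) with the top graded piece $\mathrm{gr}^p_{\tN}\Prismbar_A/p$ of the twisted Nygaard filtration from \Cref{TwistedNygaard}. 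Condition (2) thus becomes the statement that $\mathrm{gr}^p_{\tN}\Prismbar_A/p \otimes^L_{A/p} k \in \mathcal{D}^{\geq 1}(k)$, i.e., $\tau^{\leq 0}\bigl(\mathcal{N}^p\Prism_A/p \otimes^L_{A/p} k\bigr) \simeq k$ via $\theta$. Second, the identification $\mathcal{N}^1 \Prism_A \simeq \widehat{L_{A/\mathbb{Z}}}[-1]$ (used in the proof of \Cref{keycofibpolypieces}), together with the fact that $k$ has characteristic $p$, gives
\[
  H^0\bigl(\mathcal{N}^1 \Prism_A/p \otimes^L_{A/p} k\bigr) \;\simeq\; H^{-1}(L_{A/\mathbb{Z}} \otimes^L_A k),
\]
and similarly for $k$ itself, so that the map in \Cref{regularitycrit} arises as $H^0$ of the base-change of $\mathcal{N}^1 \Prism_A/p \to \mathcal{N}^1 \Prism_k/p$ along $A/p \to k$.

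The remaining task is to bridge the two: starting from the connectivity statement at Nygaard grading $p$, produce injectivity at grading $1$. For this, we exploit naturality of the twisted Nygaard filtration under $A \to k$ (the structure over the residue field being explicit via \Cref{HTofZp}) and the multiplicative action
\[
  \mathcal{N}^1 \Prism_A/p \;\otimes\; \mathcal{N}^{p-1} \Prism_A/p \;\longrightarrow\; \mathcal{N}^p \Prism_A/p,
\]
under which a putative nonzero element of the kernel of $H^{-1}(L_{A/\mathbb{Z}} \otimes^L k) \to H^{-1}(L_{k/\mathbb{Z}})$ gets multiplied into a class in $H^0(\mathcal{N}^p \Prism_A/p \otimes^L k)$ that maps to zero in $H^0(\mathcal{N}^p \Prism_k/p)$ but does not lie in the image of $\theta$, contradicting the reinterpretation of (2). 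The main obstacle is this last step: controlling the multiplicative factorization $\mathcal{N}^1 \otimes \mathcal{N}^{p-1} \to \mathcal{N}^p$ after base change to $k$, finding a suitable companion class in $\mathcal{N}^{p-1}\Prism_k/p$ (naturally produced from the description of $v_1 \equiv \theta$ in \S\ref{sec:calculations}), and tracking Frobenius twists arising from $\mathcal{N}^0\Prism_A/p \simeq A/p^{(-1)}$.
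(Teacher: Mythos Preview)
Your treatment of $(1)\Rightarrow(2)$ is fine and matches the paper.

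For $(2)\Rightarrow(1)$, there is a genuine gap, and the route you sketch does not work as stated. You propose to detect a nonzero kernel element of $H^{-1}(L_{A/\mathbb{Z}}\otimes_A k)\to H^{-1}(L_{k/\mathbb{Z}})$ by multiplying it, via $\mathcal{N}^1\otimes\mathcal{N}^{p-1}\to\mathcal{N}^p$, against a ``companion class'' in $\mathcal{N}^{p-1}\Prism_k/p$. But there is no such class in the relevant degree: using the Nygaard fiber sequence (or directly the calculation in \Cref{HTofZp}), one checks that $H^0(\mathcal{N}^{p-1}\Prism_k/p)=0$ for any field $k$ of characteristic $p$, since the Sen operator $\Theta$ acts by $-j$ on $\mathrm{gr}^j\OmegaD_k$ and hence $\Theta+(p-1)$ is invertible on every graded piece contributing to $H^0$. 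So the multiplicative scheme $\mathcal{N}^1\otimes\mathcal{N}^{p-1}\to\mathcal{N}^p$ contributes nothing in the range you need. (Relatedly, $v_1$ and $\tilde{\theta}$ live in the Nygaard \emph{filtration} $\mathcal{N}^{\geq p-1}$, $\mathcal{N}^{\geq p}$ with Breuil--Kisin twist $\{p-1\}$; they are not classes in the graded piece $\mathcal{N}^{p-1}$.)

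The paper's argument bypasses this by computing $\mathcal{N}^p\Prism_B$ directly from the Nygaard fiber sequence $\mathcal{N}^p\Prism_B\to\mathrm{Fil}_p^{\mathrm{conj}}\OmegaD_B\xrightarrow{\Theta+p}\mathrm{Fil}_{p-1}^{\mathrm{conj}}\OmegaD_B$: for polynomial $B$ this has cohomology $B/p$ in degree $1$ and $\widehat{\Omega}^p_{B/\mathbb{Z}}$ in degree $p$, and the map $\theta$ hits $H^0$ of the mod $p$ reduction. Left Kan extending gives, for any $A$, a natural fiber sequence
\[
\mathrm{cofib}(\theta)\otimes_{A/p} k \;\longrightarrow\; \bigwedge^p L_{A/\mathbb{Z}}[-p]\otimes_A k \;\longrightarrow\; k.
\]
Applying $H^0$ and d\'ecalage identifies the middle term as $\Gamma^p H^{-1}(L_{A/\mathbb{Z}}\otimes_A k)$, so condition~(2) is exactly injectivity of the natural map $\Gamma^p H^{-1}(L_{A/\mathbb{Z}}\otimes_A k)\to k$. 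Because this map is natural in $A$ and factors through $\Gamma^p H^{-1}(L_{k/\mathbb{Z}})\to k$ (which is injective since $k$ is regular), injectivity for $A$ forces $H^{-1}(L_{A/\mathbb{Z}}\otimes_A k)\to H^{-1}(L_{k/\mathbb{Z}})$ to be injective, and \Cref{regularitycrit} finishes. The essential idea you were reaching for---pass from degree $1$ to degree $p$ multiplicatively---is realized not by $\mathcal{N}^1\otimes\mathcal{N}^{p-1}$ but by the \emph{divided $p$th power} $\Gamma^p$ coming from d\'ecalage on $\bigwedge^p L$.
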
 

We remind the reader that reduction mod $p$ is interpreted in the derived sense in this article, including in the statement above and the proof below.

\begin{proof} 
We have already shown above that regular rings are $F$-smooth, whence (1)
implies (2), so we show the converse. 
For any animated ring $B$,
the Nygaard fiber sequence of \cite[Rem.~5.5.8]{BhattLurieAPC} 
and the conjugate filtration on diffracted Hodge cohomology
\cite[Cons.~4.7.1]{BhattLurieAPC}
yields a fiber sequence in $\mathcal{D}(B/p)$,
\begin{equation}  \mathrm{cofib}( \theta: B/p \to \mathcal{N}^p \Prism_B/p) \to \bigwedge^p
L_{B/p/\mathbb{F}_p}  [-p] \to B/p
\label{fiberseqmodp} \end{equation}
In more detail, if $B$ is a polynomial $\mathbb{Z}$-algebra, then the 
Nygaard fiber sequence of \emph{loc.~cit.} gives a fiber sequence
\[ \mathcal{N}^p \Prism_{B} \to \mathrm{Fil}_p^{\mathrm{conj}} \OmegaD_B
\xrightarrow{\Theta + p} \mathrm{Fil}_{p-1} \OmegaD_B. 
\]
Using the eigenvalues of the action of the Sen operator $\Theta$ on the associated graded terms of
$\OmegaD_B$ (note that the conjugate filtration is just the Postnikov
filtration in this case) as in \cite[Notation~4.7.2]{BhattLurieAPC}, we find that 
$$H^*( \mathcal{N}^p \Prism_B) \simeq  \begin{cases} 
B/p & \ast =  1 \\
\widehat{\Omega}^p_{B/\mathbb{Z}} & \ast = p \\
0 & \text{otherwise}
 \end{cases} . 
$$
Moreover, the map $\theta: B/p \to \mathcal{N}^p \Prism_{B}/p$ is an isomorphism
in $H^0$ by comparison with the case $B = \mathbb{Z}$ (\Cref{HTofZp}); this
(together with left  Kan extension) easily gives the fiber sequence \eqref{fiberseqmodp}. 

Taking $B = A$ and base-changing to $A/ p \to k$, we obtain a fiber
sequence
\begin{equation} 
\label{fourthfibseq}
\mathrm{cofib}( \theta: A/p \to \mathcal{N}^p \Prism_A/p)
\otimes_{A/p}
k \to \bigwedge^p L_{A/\mathbb{Z}}[-p] \otimes_A k \to k
\end{equation} 
By the lci hypotheses, 
$L_{(A/p)/\mathbb{F}_p} \in \mathcal{D}(A/p)$ has
$\mathrm{Tor}$-amplitude in $[-1, 0]$. 
The condition (2) is equivalent to the injectivity of the map
(obtained by applying $H^0$ to the second map in \eqref{fourthfibseq}, using
d\'ecalage \cite[\S 4.3.2]{Illusiecotangent1})
\begin{equation} \label{naturaldividedpowmap} \Gamma^p H^{-1} ( L_{A/\mathbb{Z}} \otimes_A k) = H^{-p}( \bigwedge^p
L_{A/\mathbb{Z}} \otimes_A k) \to k .\end{equation}
We have constructed the map \eqref{naturaldividedpowmap} naturally in the lci
ring $(A, \mathfrak{m})$ with residue field $k$. 
Moreover, it is injective if $A = k$, since we have seen that regular rings
are $F$-smooth and hence satisfy (2). Conversely, suppose $A$ satisfies (2). It follows by naturality of
\eqref{naturaldividedpowmap} that $H^{-1} ( L_{A/\mathbb{Z}} \otimes_A k) \to
H^{-1}(L_{k/\mathbb{Z}})$ is injective, whence regularity of $A$ by 
\Cref{regularitycrit}. 
\end{proof} 

\begin{proof}[Proof that $F$-smoothness implies regularity under
$p$-completeness] 
Let $A$ be a $p$-complete noetherian ring which is $F$-smooth. We argue that $A$
is regular. It suffices to check that the localization of $A$ at any maximal
ideal is regular, since a noetherian ring is regular if and only if its
localizations at maximal ideals are regular. Since $p$ belongs to any maximal ideal, we reduce to the case
where $A$ is  a $p$-complete local ring which is $F$-smooth. 
Our $p$-quasisyntomicity assumption implies that $L_{A/\mathbb{Z}} \otimes_A k \in \mathcal{D}^{[-1,
0]}(k)$; by \cite[Prop.~1.8]{Avramov}, this implies that $A$ is a complete
intersection. 
Then we can appeal to \Cref{Fsmoothweakgivesregular} to conclude that $A$ is
regular, as desired. 
\end{proof} 

\subsection{Dimension bounds}
As an application, we can obtain some dimension bounds on the Hodge--Tate cohomology
of regular rings, and verify \cite[Conj.~10.1]{BhattLurieprismatization} with
an additional assumption of 
$F$-finiteness. Let us recall the setup. 
For a quasisyntomic ring $R$, we consider the Hodge--Tate stack
$\wcartHT_{\mathrm{Spf}(R)}$ 
defined in \cite[Cons.~3.7]{BhattLurieprismatization}; recall that this stack comes with
a map $\wcartHT_{\mathrm{Spf}(R)} \to \mathrm{Spf}(R)$ and 
line bundles
$\mathcal{O}_{\wcartHT_{\mathrm{Spf}(R)}}\left\{i\right\}$ whose cohomology yields
$\Prismbar_R\left\{i\right\}$.

Before formulating the result, let us also recall some facts about 
$F$-finiteness. 
A noetherian $\mathbb{F}_p$-algebra $S$ is said to be \emph{$F$-finite} if it is
finitely generated over its $p$th powers. 
If $S$ is  a noetherian local $\mathbb{F}_p$-algebra, $F$-finiteness is equivalent to
the 
assumption 
that the residue field of $S$ is $F$-finite and $S$ is excellent, cf.~\cite[Cor.~2.6]{Kunz}. 
Moreover, $S$ is $F$-finite if and only if the cotangent complex $L_{S/\mathbb{F}_p} \in
\mathcal{D}(S)$ is almost
perfect, 
cf.~\cite[Th.~3.6]{DundasMorrow} and \cite[Th.~3.5.1]{Orientations}.

\begin{corollary} Let $R$ be a $p$-complete regular local ring with residue field $k$. Suppose that 
$R/pR$ is $F$-finite. 
Let $ d= \dim R + \mathrm{log}_p[k: k^p]$. 
Then the Hodge--Tate stack $\wcartHT_{\mathrm{Spf}(R)}$ 
has
cohomological dimension $\leq d$. 
In particular, $\Prismbar_{R}\left\{i\right\} \in \mathcal{D}^{\leq d}(\mathbb{Z}_p)$ for
each $i$.
\end{corollary}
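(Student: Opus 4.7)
Plan: The plan is to reduce the cohomological dimension bound on the stack to a dimension bound on the individual Hodge--Tate complexes $\Prismbar_R\{i\}$, and then prove the latter using the $F$-smoothness of $R$ (from \Cref{regularringsSegal}) together with the $F$-finiteness hypothesis. Since $R$ is regular, it is $F$-smooth, so the Nygaard filtration on $\Prismbar_R\{i\}$ is complete and $\Prismbar_R\{i\} = R\Gamma(\wcartHT_{\mathrm{Spf}(R)}, \mathcal{O}\{i\})$. Via a standard reduction using the Sen-operator decomposition of quasi-coherent sheaves on the Hodge--Tate stack (any such sheaf is an iterated extension of twists $\mathcal{O}\{i\}$ tensored with $R$-modules), it suffices to prove the sharper statement $\Prismbar_R\{i\} \in \mathcal{D}^{\leq d}(\mathbb{Z}_p)$ for every $i \in \mathbb{Z}$.

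By $p$-completeness, this reduces further to $\Prismbar_R\{i\}/p \in \mathcal{D}^{\leq d}(R/p)$. Now invoke the twisted Nygaard filtration from \Cref{TwistedNygaard}, which is complete by $F$-smoothness (\Cref{whentwistedcomplete}): passing to its associated graded, which is of the form $\mathrm{cofib}(\theta: \mathcal{N}^{j-p}\Prism_R/p \to \mathcal{N}^j \Prism_R/p)$, we reduce to showing $\mathcal{N}^j \Prism_R/p \in \mathcal{D}^{\leq d}(R/p)$ for every $j \geq 0$. Applying the Nygaard fiber sequence (cf.~the proof of \Cref{Fsmoothweakgivesregular}),
\[
\mathcal{N}^j \Prism_R/p \to \fil_j^{\mathrm{conj}} \frac{\OmegaD_R}{p} \xrightarrow{\Theta + j} \fil_{j-1}^{\mathrm{conj}} \frac{\OmegaD_R}{p},
\]
the problem becomes a cohomological bound for the conjugate-filtered pieces of the mod-$p$ diffracted Hodge complex, with associated graded $\bigwedge^k L_{(R/p)/\mathbb{F}_p}[-k]$.

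The hard part will be bounding these exterior powers inside $\mathcal{D}^{\leq d}(R/p)$. Under $F$-finiteness of $R/pR$, the cotangent complex $L_{(R/p)/\mathbb{F}_p}$ is almost perfect, and by the lci property of $R/p$ (which follows from $R$ being regular), it has Tor-amplitude in $[-1, 0]$. The key input is a rank estimate: at the residue field $k$, the total $k$-dimension of $L_{(R/p)/\mathbb{F}_p} \otimes^L_{R/p} k$ is at most $d = \dim R + \log_p[k:k^p]$, verified case by case according to whether $R$ is of equal characteristic, mixed unramified, or mixed ramified. Propagating this bound through exterior powers --- with careful treatment of the divided-power contributions when $H^{-1}(L_{(R/p)/\mathbb{F}_p})$ is nontrivial --- should yield the desired dimension estimate. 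The mixed-characteristic ramified case, where $R/p$ is merely lci and the divided powers of $H^{-1}$ produce a priori large exterior powers, will be the chief technical difficulty; I expect the Sen operator $\Theta + j$ in the Nygaard fiber sequence must be used to kill the divergent contributions, so that only the bounded-in-$d$ part survives.
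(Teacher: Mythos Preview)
Your proposal takes a different route from the paper's and contains two substantive gaps.

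\textbf{Gap 1: the stack-to-$\Prismbar_R\{i\}$ reduction.} You claim that bounding each $\Prismbar_R\{i\}$ in $\mathcal{D}^{\leq d}$ suffices for the cohomological dimension bound on $\wcartHT_{\mathrm{Spf}(R)}$, via a ``standard reduction'' expressing every quasi-coherent sheaf as an iterated extension of twists. But such a generation statement only yields vanishing of $H^{d+1}$ once you already know $H^{d+1}$ is right exact, i.e., once you have some \emph{a priori} finite cohomological dimension. The paper does not take this for granted: it first proves $\mathrm{cd} \leq d+1$ by choosing a Cohen presentation $A = W[[t_1,\dots,t_r]] \twoheadrightarrow R$, using the explicit description of $\wcartHT_{\mathrm{Spf}(A)}$ as $B(\mathbb{G}_a^{\sharp d} \rtimes \mathbb{G}_m^\sharp)$, and the affineness of $\wcartHT_{\mathrm{Spf}(R)} \to \wcartHT_{\mathrm{Spf}(A)}$. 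Only then does it use the generation of $p$-torsion sheaves by quotients of $\mathcal{O}\{n\}/p$ to improve $d+1$ to $d$.

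\textbf{Gap 2: the bound on $\Prismbar_R\{i\}$.} Your plan is to bound $\mathcal{N}^j\Prism_R/p$ via the Nygaard fiber sequence and the conjugate filtration on $\OmegaD_R/p$, whose graded pieces are $\bigwedge^k L_{(R/p)/\mathbb{F}_p}[-k]$. In the ramified case $R/p$ is only lci, so $L_{(R/p)/\mathbb{F}_p}$ has nonzero $H^{-1}$, and then $\bigwedge^k L[-k]$ lives in degrees up to $k$; for $k > d$ this is \emph{not} in $\mathcal{D}^{\leq d}$. You acknowledge this and say you ``expect'' the Sen operator $\Theta+j$ to kill the excess, but you give no mechanism. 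This is the heart of the matter and is not a proof as written. The paper avoids this entirely: it proves $\Prismbar_R\{i\} \in \mathcal{D}^{\leq d}$ by induction on $\dim R$. One first reduces to $R$ complete, then shows the Nygaard graded pieces are almost perfect (using $F$-finiteness) so that $\Prismbar_R\{i\}$ is $\mathfrak{m}$-adically complete; then for $x \in \mathfrak{m}\setminus\mathfrak{m}^2$ one uses the fiber sequence of \Cref{prismHTfibseq},
\[
\Prismbar_R\{i\}/x \to \Prismbar_{R/x}\{i\} \to \Prismbar_{R/x}\{i-1\},
\]
to reduce to $R/x$, which is regular of dimension one less. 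The base case $R=k$ is handled by the de~Rham comparison and $\dim_k \Omega^1_{k/\mathbb{F}_p} = \log_p[k:k^p]$.

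In short: the paper's induction on a regular system of parameters, leveraging \Cref{prismHTfibseq}, is the missing idea; your cotangent-complex rank argument does not close in the ramified case without substantial further work.
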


\begin{proof} 
 Let us first
reduce to the case where $R$ is complete. 
The map $\hat{R}_{\mathfrak{m}} \otimes_{\mathbb{Z}} L_{R/\mathbb{Z}} \to L_{\hat{R}_{\mathfrak{m}}/\mathbb{Z}}$
is an isomorphism after $p$-completion: in fact, both sides are almost perfect
mod $p$ by $F$-finiteness (as recalled above) 
 and the map is an isomorphism after base-change to the residue field, whence
 the claim by Nakayama. 
It follows by \cite[Rem.~3.9]{BhattLurieprismatization} (and its proof) that 
the diagram
\[ \xymatrix{
\wcartHT_{\mathrm{Spf}(\hat{R}_{\mathfrak{m}})} \ar[d]  \ar[r] &
\wcartHT_{\mathrm{Spf}(R)} \ar[d]  \\
\mathrm{Spf}(\hat{R}_{\mathfrak{m}}) \ar[r] &  \mathrm{Spf}(R)
},\]
is cartesian. Therefore, since $R \to \widehat{R}_{\mathfrak{m}}$ is faithfully
flat, 
it suffices 
to replace everywhere $R$ by $\hat{R}_{\mathfrak{m}}$, so  we may assume that $R$
itself is complete.

Let us now verify the dimension bound on the Hodge--Tate complexes
$\Prismbar_R\left\{i\right\}$, i.e., that $\Prismbar_R\left\{i\right\} \in
\mathcal{D}^{\leq d}( \mathbb{Z}_p)$ for each $i$. 
The associated graded terms of the Nygaard filtration on
$\Prismbar_R\left\{i\right\}$ (i.e., $\mathcal{N}^j \Prism_R/p^{(-1)}$) are almost perfect
$R$-modules, whence $\mathfrak{m}$-adically complete, 
in light of the Nygaard fiber sequences \cite[Rem.~5.5.8]{BhattLurieAPC} and the
almost perfectness mod $p$ of  $L_{R/\mathbb{Z}}$ recalled above. 
 Using the 
completeness of the Nygaard filtration (\Cref{regularringsSegal}), we find that
$\Prismbar_R\left\{i\right\}$ is
$\mathfrak{m}$-adically complete.  
If $R$ is zero-dimensional and hence $R = k$, the result follows from the
comparison \cite[Th.~5.4.2]{BhattLurieAPC} between Hodge--Tate and de Rham cohomology 
of $\mathbb{F}_p$-algebras
since
$\dim \Omega^1_{k/\mathbb{F}_p} = \mathrm{log}_p[k: k^p]$, cf.~\cite[Tag
07P2]{stacks-project}. 
Otherwise, choose $x \in \mathfrak{m} \setminus \mathfrak{m}^2$. 
The ring $R/x$ is also regular local with the same residue field and of
dimension one less. 
To see that $\Prismbar_R\left\{i\right\} \in \mathcal{D}^{\leq d}(R)$, it suffices (by
$x$-adic completeness proved above) to
show that $\Prismbar_R\left\{i\right\} /x \in \mathcal{D}^{\leq d}(R)$. However, we have a
fiber sequence
from \Cref{prismHTfibseq} which, together with induction on the dimension, implies the claim.

Now we prove the cohomological dimension bound on $\wcartHT_{\mathrm{Spf}(R)}$. 
First, we prove that the cohomological dimension is at most $d+1$. 
Let $W$ be a Cohen ring for $k$. By the Cohen structure theorem, we have a surjection
\[ A = W[[t_1, \dots, t_r]] \to R  \]
for $r = \mathrm{dim}(R)$, whose kernel is generated by a nonzerodivisor. 
By choosing a $p$-basis for $k$, we see that the ring $A$ is formally \'etale over a polynomial ring in $d$ variables over
$\mathbb{Z}_p$, and consequently that $\wcartHT_{\mathrm{Spf}(A)} =
\wcartHT_{\spf( \mathbb{Z}_p\left \langle x_1, \dots, x_d
\right\rangle)
} \times_{\spf( \mathbb{Z}_p\left \langle x_1, \dots, x_d
\right\rangle)}
\spf(A)$. Using the expression for the Hodge--Tate stack of the polynomial
$\mathbb{Z}_p$-algebra in \cite[Ex.~9.1]{BhattLurieprismatization} as the
classifying stack of $(\mathbb{G}_a^{\sharp d} \rtimes \mathbb{G}_m^{\sharp})$, 
and the explicit description of representations of $\mathbb{G}_a^{\sharp},
\mathbb{G}_m^{\sharp}$ in \cite[Sec.~3.5]{BhattLurieAPC} and
\cite[Lem.~6.7]{BhattLurieprismatization}, one finds that 
$\mathrm{cd}( \wcartHT_{\mathrm{Spf}(A)}) \leq d+1$. 
By affineness of $\wcartHT_{\mathrm{Spf}(R)} \to \wcartHT_{\mathrm{Spf}(A)}$
(\Cref{affinelemma} below), we
obtain 
$\mathrm{cd}(\wcartHT_{\mathrm{Spf}(R)} ) \leq d+1$. It remains to show that
$H^{d+1}$ of any quasicoherent sheaf on  
$\wcartHT_{\mathrm{Spf}(R)}$ (which we may assume to be $p$-torsion) vanishes.

Consider the category of $p$-torsion sheaves on 
$\wcartHT_{\mathrm{Spf}(R)}$ (we recall that $\wcartHT_{\mathrm{Spf}(R)}$ is
defined as a functor on $p$-nilpotent rings, so this case will suffice).  
We claim that 
for any $p$-torsion sheaf $\mathcal{F}$ on  
$\wcartHT_{\mathrm{Spf}(R)}$, we have $H^0( \wcartHT_{\mathrm{Spf}(R)}, 
\mathcal{F}\left\{-n\right\}) \neq 0$ for some $n$. 
In fact, using the affine map $ \wcartHT_{\mathrm{Spf}(R)} \to 
\wcartHT_{\mathrm{Spf}(A)}
$, this claim reduces to the analogous claim for $\mathrm{Spf}(A)$; but this in
turn follows from the explicit description of the Hodge--Tate stack for
$\mathrm{Spf}(A)$. 
It follows that the category of $p$-torsion sheaves on 
$\wcartHT_{\mathrm{Spf}(R)}$ is generated under 
colimits and extensions by the quotients of the $\mathcal{O}\left\{n\right\}/p$. 
Since $H^{d+1}$ is a right exact functor for quasicoherent sheaves on
$\wcartHT_{\mathrm{Spf}(R)}$ (as proved above), and since we have
the cohomological dimension bound on the $\mathcal{O}\left\{n\right\}$, we now
conclude as desired. 
\end{proof}

\begin{lemma} 
\label{affinelemma}
Let $R$ be a quasisyntomic ring and let $t \in R$ be a nonzerodivisor. The map 
$ \wcartHT_{\spf(R/t)} \to \spf(R/t) \times_{\spf(R)} \wcartHT_{\spf(R)}$ is affine. 
\end{lemma}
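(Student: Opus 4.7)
The plan is to reduce the assertion to the universal polynomial-ring case and then deduce affineness from the explicit description of the Hodge--Tate stack of a polynomial ring given in \cite[Ex.~9.1]{BhattLurieprismatization}. Write $\mathcal{Y} = \wcartHT_{\spf(R/t)}$ and $\mathcal{X} = \spf(R/t) \times_{\spf(R)} \wcartHT_{\spf(R)}$, so $f\colon \mathcal{Y} \to \mathcal{X}$ is the canonical map.

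First, since $t$ is a nonzerodivisor, $R/t \simeq R \otimes^L_{\mathbb{Z}_p[T]} \mathbb{Z}_p$, where $\mathbb{Z}_p[T] \to R$ sends $T \mapsto t$ and $\mathbb{Z}_p[T] \to \mathbb{Z}_p$ sends $T \mapsto 0$. Modulo a K\"unneth-type base-change formula for Hodge--Tate stacks (which follows by quasisyntomic descent from the corresponding base-change property in prismatic cohomology), the natural square
\[
\xymatrix{
\wcartHT_{\spf(R/t)} \ar[r] \ar[d] & \wcartHT_{\spf(R)} \ar[d] \\
\wcartHT_{\spf(\mathbb{Z}_p)} \ar[r] & \wcartHT_{\spf(\mathbb{Z}_p[T])}
}
\]
is Cartesian, so $f$ is the base change of the universal map
\[ f_0 \colon \wcartHT_{\spf(\mathbb{Z}_p)} \longrightarrow \spf(\mathbb{Z}_p) \times_{\spf(\mathbb{Z}_p[T])} \wcartHT_{\spf(\mathbb{Z}_p[T])} \]
along the natural projection. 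Since affineness is preserved under base change, it suffices to prove that $f_0$ is affine.

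Second, by \cite[Ex.~9.1]{BhattLurieprismatization}, the fiber of $\wcartHT_{\spf(\mathbb{Z}_p[T])} \to \spf(\mathbb{Z}_p[T])$ over $T = 0$ identifies with $B(\mathbb{G}_a^\sharp \rtimes \mathbb{G}_m^\sharp)$, while $\wcartHT_{\spf(\mathbb{Z}_p)} \simeq B\mathbb{G}_m^\sharp$. Under these identifications $f_0$ becomes the map of classifying stacks $B\mathbb{G}_m^\sharp \to B(\mathbb{G}_a^\sharp \rtimes \mathbb{G}_m^\sharp)$ induced by the inclusion of the Levi factor $\mathbb{G}_m^\sharp \hookrightarrow \mathbb{G}_a^\sharp \rtimes \mathbb{G}_m^\sharp$; this is a torsor for the homogeneous space $(\mathbb{G}_a^\sharp \rtimes \mathbb{G}_m^\sharp)/\mathbb{G}_m^\sharp \simeq \mathbb{G}_a^\sharp$, hence affine since $\mathbb{G}_a^\sharp$ is an affine scheme.

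The main obstacle is justifying the K\"unneth-type base-change formula for Hodge--Tate stacks used in the first step. One can sidestep this by arguing directly: upgrade the cohomological fiber sequence of Corollary \ref{prismHTfibseq} to a fiber sequence of quasicoherent sheaves $f^*\mathcal{O}_\mathcal{X}\{i\} \to \mathcal{O}_\mathcal{Y}\{i\} \to \mathcal{O}_\mathcal{Y}\{i-1\}$ on $\mathcal{Y}$, then use it together with the projection formula to compute $Rf_*\mathcal{O}_\mathcal{Y}$ inductively; the outcome should be that $Rf_*\mathcal{O}_\mathcal{Y}$ is the divided-power algebra on the invertible sheaf $\mathcal{O}_\mathcal{X}\{-1\}$, concentrated in cohomological degree zero, thereby exhibiting $\mathcal{Y}$ as the affine scheme $\spec_\mathcal{X}(Rf_*\mathcal{O}_\mathcal{Y})$ (a $\mathbb{G}_a^\sharp$-torsor over $\mathcal{X}$).
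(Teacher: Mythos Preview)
Your approach matches the paper's: reduce to the universal case $R = \widehat{\mathbb{Z}_p[t]}$ and identify the map with $B\mathbb{G}_m^\sharp \to B(\mathbb{G}_a^\sharp \rtimes \mathbb{G}_m^\sharp)$ via \cite[Ex.~9.1]{BhattLurieprismatization}. The paper asserts the reduction without elaboration, while you spell out what is needed; the K\"unneth square you draw is in fact Cartesian because the Hodge--Tate stack construction sends (derived) pushouts of rings to fiber products of stacks by its very functor-of-points definition, so the concern you raise in the final paragraph is not a genuine obstacle and the alternative direct computation sketched there is unnecessary.
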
 
\begin{proof} 
We reduce to the case where $R $ is the $p$-completion of $\mathbb{Z}_p[t]$. 
In this case, by 
\cite[Ex.~9.1]{BhattLurieprismatization},
the above map is identified with 
$B\mathbb{G}_m^{\sharp} \to B (\mathbb{G}_a^{\sharp} \rtimes
\mathbb{G}_m^{\sharp})$; in particular, it is affine. 
\end{proof} 

\section{Comparison with $p$-adic \'etale Tate twists}
\label{CompEtaleTate}
In this section, we prove \Cref{Zpiregularschemes} from the introduction. That
is, on a $F$-smooth $p$-torsionfree
scheme, we show that 
the complex $\mathbb{Z}/p^n(i)_X$ can be obtained via a generalization of the
construction of $p$-adic  \'etale Tate twists \cite{Gei04, Schneider, Sato},
i.e., by modifying the truncated $p$-adic vanishing cycles $\tau^{\leq i} Rj_* (
\mu_{p^n}^{\otimes i})$ by taking the subsheaf in degree $i$ generated by
symbols from $X$. 

Our strategy is as follows. Since we already know the $\mathbb{F}_p(i)$ for
$\mathbb{Z}[1/p]$-schemes are the usual Tate twists, it suffices to treat the
$p$-henselian case. One needs to show that the map 
$\mathbb{F}_p(i)(X) \to \mathbb{F}_p(i)(X[1/p])$ is highly coconnected. 
The \'etale comparison theorem (\Cref{etcompthm} below) implies that one may
obtain the $\mathbb{F}_p(i)(X[1/p])$ by inverting the operator $v_1$ on the
$\mathbb{F}_p(i)(X)$.  Thus, we reduce to showing that the map 
$v_1: \mathbb{F}_p(i)(X) \to \mathbb{F}_p(i+p-1)(X)$ is highly coconnected. 
Here we use an explicit argument (which was inspired by \cite{HW21}) with the
expression of \cite{BMS2} to check the claim. To determine the top-degree
cohomology, we use also the classical results of Bloch--Kato \cite{BK86} on
$p$-adic vanishing cycles. 

In \cite{KM21, KST21}, it is shown that the 
description of the $\mathbb{Z}/p^n(i)_X$ for regular $\mathbb{F}_p$-schemes via
logarithmic Hodge--Witt forms (cf.~\cite[Sec.~8]{BMS2} and \cite{GH99}) also
holds for the Cartier smooth case. 
Our \Cref{Zpiregularschemes} may be seen as a mixed characteristic analog
of this result.

\subsection{The \'etale comparison}

Let $X$ be a qcqs derived scheme. 
As in \cite[Sec.~8]{BhattLurieAPC}, one associates the graded $E_\infty$-algebra
$\bigoplus_{i \in \mathbb{Z}} \mathbb{F}_p(i)(X)$, the mod $p$ syntomic
cohomology of $X$. 
When $X$ is the spectrum of a $p$-complete animated ring, this can be obtained
(via descent and left Kan extension) from the Frobenius fixed points 
of prismatic cohomology  as in \cite[Sec.~7]{BhattLurieAPC} and \cite{BMS2}. However, when $X $ is a $\mathbb{Z}[1/p]$-scheme, it
is the usual Tate twisted \'etale cohomology
$\bigoplus_{i \in \mathbb{Z} }R \Gamma_{\et}( X, \mu_p^{\otimes i})$. 

For any $X$, 
the class $v_1 \in H^0( \mathbb{F}_p(p-1)(\mathbb{Z}))$ constructed in
\Cref{constructionofv1} yields a class in $H^0
\left( \bigoplus_i 
\mathbb{F}_p(i)(X)\right)$ which maps to a unit after passage to $X[1/p]$. 

\begin{theorem}[{The \'etale comparison, \cite[Th.~8.5.1]{BhattLurieAPC}}] 
\label{etcompthm}
Let $X$ be any qcqs derived scheme. 
The natural map of graded $E_\infty$-algebras over $\mathbb{F}_p$,
\begin{equation}  \bigoplus_{i  \in \mathbb{Z}} \mathbb{F}_p(i)(X) \to \bigoplus_{i \in \mathbb{Z}}
\mathbb{F}_p(i)(X[1/p]),  \label{mapofgradedrings} \end{equation}
exhibits the target as the localization of the source at $v_1$. 
In particular, 
for any $i$, the filtered colimit
\[ \mathbb{F}_p(i)(X) \stackrel{v_1}{\to} \mathbb{F}_p(i+p-1)(X)
\stackrel{v_1}{\to} \mathbb{F}_p(i + 2(p-1))(X) \to \dots    \]
is canonically identified with $\mathbb{F}_p(i)(X[1/p]) = R \Gamma_{\et}( X[1/p]; \mathbb{F}_p(i))$. 
\end{theorem}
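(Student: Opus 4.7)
The plan is to reduce, via the fibre square of the introduction and quasisyntomic descent, to a Nygaard-filtered prismatic computation on $p$-torsionfree quasiregular semiperfectoid rings, and then to use Propositions~\ref{invertingtheta} and~\ref{widetildethetaimage} to show that $v_1$-localisation trivialises the Nygaard filtration and reduces the syntomic fibre sequence to a Frobenius-fixed-points computation identifiable with étale cohomology.

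First I would reduce to the $p$-henselian case. The fibre square of the introduction shows that the fibre of \eqref{mapofgradedrings} depends only on the $p$-henselisation of $X$. Moreover, the target of \eqref{mapofgradedrings} is already $v_1$-local: on any $\mathbb{Z}[1/p]$-scheme the étale sheaf $\mu_p^{\otimes(p-1)}$ is canonically isomorphic to $\mathbb{F}_p$ (the cyclotomic character modulo $p$ takes values in $\mathbb{F}_p^\times$, a group of order $p-1$), and the construction in \Cref{constructionofv1} shows that $v_1$ restricts to the canonical trivialising section. By Zariski descent, quasisyntomic descent, and left Kan extension, it then suffices to treat the case $X = \spec(R)$ for $R \in \qrsp_{\mathbb{Z}_p}$ a $p$-torsionfree quasiregular semiperfectoid ring.

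In that case one has the defining fibre sequence
\[ \bigoplus_i \mathbb{F}_p(i)(R) \;=\; \mathrm{fib}\!\left(\mathrm{can} - \phi\colon \bigoplus_i \mathcal{N}^{\geq i}\frac{\Prism_R\{i\}}{p} \to \bigoplus_i \frac{\Prism_R\{i\}}{p}\right), \]
on which $v_1$ acts as a graded endomorphism of weight $p-1$. The key input is then \Cref{widetildethetaimage}, which identifies the image of $v_1$ in the Nygaard-graded ring $\bigoplus_i \mathcal{N}^i \Prism_R/p$ with a unit multiple of $\theta$. Combined with \Cref{invertingtheta} and using that filtered colimits commute with fibres, this identifies the $v_1$-localisation of the source of $\mathrm{can}-\phi$ with the Hodge--Tate cohomology $\bigoplus_i \Prismbar_R\{i\}/p$; the Nygaard-completeness of $\Prism_R/p$ for quasiregular semiperfectoid $R$ is what allows passage from the graded assertion of \Cref{invertingtheta} to the filtered setting. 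A parallel analysis localises the unfiltered target and, in the limit, turns $\mathrm{can}$ into the identity and $\phi$ into the honest Frobenius on Hodge--Tate cohomology, yielding
\[ v_1^{-1}\bigoplus_i \mathbb{F}_p(i)(R) \;\simeq\; \mathrm{fib}\!\left(1-\phi\colon \bigoplus_i \frac{\Prismbar_R\{i\}}{p} \to \bigoplus_i \frac{\Prismbar_R\{i\}}{p}\right). \]

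The final step is to identify the right hand side with $\bigoplus_i R\Gamma_{\et}(\spec(R[1/p]), \mu_p^{\otimes i})$, which for $R$ perfectoid is the classical consequence of Artin--Schreier theory for $R/p$ combined with Faltings' almost purity (the $\phi$-fixed points of $R/p\{i\}$ compute $\mu_p^{\otimes i}$-étale cohomology of $\spec(R[1/p])$), and in general follows by quasisyntomic descent from a perfectoid cover. The principal obstacle is the second step: while the identification $v_1 \equiv \theta$ holds on the Nygaard-associated-graded by \Cref{widetildethetaimage}, one must rigorously track the interaction of $v_1$-localisation with the Nygaard-filtered source and the (differently filtered) unfiltered target of $\mathrm{can} - \phi$. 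The Nygaard-completeness of $\Prism_R/p$ for $R\in\qrsp_{\mathbb{Z}_p}$ together with the explicit cyclotomic computation in \Cref{widetildethetaimage} are the crucial inputs that make this possible.
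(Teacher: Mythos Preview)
Your approach is genuinely different from the paper's. The paper does not attempt a direct prismatic computation at all: it simply reduces to the case of schemes over $\mathbb{Z}[\zeta_{p^\infty}]$ (where the result is \cite[Th.~8.5.1]{BhattLurieAPC} with $\epsilon$ in place of $v_1$), using coconnectivity and the sheaf property in the $p$-quasisyntomic case, and a left Kan extension argument in general. Your route tries instead to unwind the syntomic fibre sequence after $v_1$-localization, which would be more self-contained if it worked.

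However, there is a real error in your key step. You invoke \Cref{invertingtheta} to identify the $v_1$-localization of the source of $\mathrm{can}-\phi$ with Hodge--Tate cohomology. But \Cref{invertingtheta} concerns the Nygaard \emph{graded} pieces $\bigoplus_i \mathcal{N}^i \Prism_R/p$, whereas the source of $\mathrm{can}-\phi$ is built from the Nygaard \emph{filtered} pieces $\bigoplus_i \mathcal{N}^{\geq i}\Prism_R\{i\}/p$; Nygaard-completeness does not bridge this. Concretely, for $R$ perfectoid one computes (using $v_1 \equiv d\cdot e^{p-1}$ in $\Prism_{\zpcycl}\{p-1\}/p$ with $d$ a generator of $I$) that
\[
v_1^{-1}\bigoplus_i \mathcal{N}^{\geq i}\Prism_R\{i\}/p \;\simeq\; \Prism_R[1/I]/p \quad\text{(with twists)},
\]
not $\bigoplus_i \Prismbar_R\{i\}/p$. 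In particular there is no ``honest Frobenius on Hodge--Tate cohomology'' in the sense you need: the prismatic Frobenius does not even preserve the ideal $I$ in general, so it does not descend to $\Prismbar_R$. Your displayed formula for $v_1^{-1}\bigoplus_i\mathbb{F}_p(i)(R)$ is therefore incorrect; the correct target of the localization is a Frobenius-fixed-points expression on $\Prism_R[1/I]/p$, and identifying \emph{that} with $R\Gamma_{\et}(\spec(R[1/p]),\mu_p^{\otimes i})$ is exactly the \'etale comparison of \cite[Th.~9.1]{Prisms} that underlies the cited input. So even after repair, your argument would be circular unless you independently supply that comparison.

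A secondary issue: your reduction to $\qrsp_{\mathbb{Z}_p}$ via quasisyntomic descent is not justified as written, since filtered colimits (in particular $v_1$-localization) need not commute with the totalizations arising in descent. The paper handles this by observing that both sides of \eqref{mapofgradedrings} are coconnective in the $p$-quasisyntomic case, so one can argue degree by degree.
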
 
\begin{proof} 
When $X$ is a scheme over $\mathbb{Z}[\zeta_{p^\infty}]$, the result is proved
in \cite[Th.~8.5.1]{BhattLurieAPC}: in that case, one obtains a similar
statement for the $p$-complete $E_\infty$-algebras 
$\bigoplus_{i \in \mathbb{Z}} \mathbb{Z}_p(i)(X), \bigoplus_{i \in \mathbb{Z}}
\mathbb{Z}_p(i)(X[1/p])$, when one inverts the class $\epsilon \in H^0(
\mathbb{Z}_p(1)( \mathbb{Z}[\zeta_{p^\infty}]))$ arising from the given system
of $p$-power roots of unity. Let us explain how one can deduce the current form
of the result. 

First, if $X$ is $p$-quasisyntomic (which is the only case that will be used
below), then we observe that both sides  
of \eqref{mapofgradedrings} are coconnective. Using the sheaf property, one may
reduce to the case where $X$ lives over $\mathbb{Z}[\zeta_{p^\infty}]$, which is
proved in \emph{loc.~cit.}

To prove the result more generally, it suffices to show that the construction
which carries an animated ring $R$ to $R \Gamma_{\et}( \spec(R[1/p]),  \mu_p^{\otimes
i})$ (i.e., the right-hand-side of \eqref{mapofgradedrings}) 
is left Kan extended from smooth $\mathbb{Z}$-algebras. 
In fact, the left-hand-side is left Kan extended from smooth
$\mathbb{Z}$-algebras \cite[Prop.~8.4.10]{BhattLurieAPC}, as is its localization 
after inverting $v_1$, and for smooth (in particular $p$-quasisyntomic) algebras
we have already seen the result.

Now we claim that the construction which carries
an animated ring $R$ to $\bigoplus_{i \in \mathbb{Z}}R \Gamma_{\et}( \spec(R \otimes
\mathbb{Z}[\zeta_{p^\infty}][1/p]), \mu_p^{\otimes i})$ is left Kan extended
from smooth $\mathbb{Z}$-algebras. 
In fact, by \cite[Th.~8.5.1]{BhattLurieAPC}, this construction is the
localization of $R \mapsto \bigoplus_{i \in \mathbb{Z}} \mathbb{F}_p(i)( \spec(R
\otimes_{\mathbb{Z}} \mathbb{Z}[\zeta_{p^\infty}]))$ at $v_1$.
This construction in turn fits into a fiber sequence
\cite[Rem.~8.4.8]{BhattLurieAPC} involving terms that are either rigid for
henselian pairs or which commute with sifted colimits, cf.~the proof of
\cite[Prop.~8.4.10]{BhattLurieAPC}.  As in \emph{loc.~cit.}, this implies that 
$\bigoplus_{i \in \mathbb{Z}}R \Gamma_{\et}( \spec(R \otimes
\mathbb{Z}[\zeta_{p^\infty}][1/p]), \mu_p^{\otimes i})$
is left Kan extended from smooth $\mathbb{Z}$-algebras. Taking
$\mathbb{Z}_p^{\times}$-Galois invariants, we conclude that 
$\bigoplus_{i \in \mathbb{Z} }R \Gamma_{\et}( \spec(R[1/p]),  \mu_p^{\otimes
i})$
has the desired left Kan extension property. 
\end{proof}

\subsection{Comparison with the generic fiber}

In this subsection, we prove the following basic comparison result; over a
perfectoid, this has also been proved by Bouis, cf.~\cite[Th.~4.14]{Bouis}. 
\begin{proposition} 
Let $A$ be a $p$-torsionfree $p$-quasisyntomic ring which is $F$-smooth. Then for each $i$,
the canonical map 
$\mathbb{F}_p(i)(A) \to \mathbb{F}_p(i)(A[1/p]) = R \Gamma_{\et}(\spec(A[1/p]) ; \mathbb{F}_p(i))$  has fiber in
$\mathcal{D}^{\geq i+1}(\mathbb{F}_p)$. 
\label{mainresult}
\end{proposition}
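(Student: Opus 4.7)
The strategy, following the outline given in the subsection introduction, is to use the étale comparison to reduce to a statement about $v_1$-periodicity on the syntomic complex of $A$ itself, then analyze $v_1$ via its Nygaard-filtered lift $\widetilde{\theta}$, and finally conclude by invoking weak $F$-smoothness on associated graded pieces.

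First, by the étale comparison theorem (\Cref{etcompthm}) applied to $X = \spec(A)$, we have
\[
\mathbb{F}_p(i)(A[1/p]) \;\simeq\; \mathrm{colim}\!\left(\mathbb{F}_p(i)(A) \xrightarrow{v_1} \mathbb{F}_p(i+p-1)(A) \xrightarrow{v_1} \cdots\right).
\]
Since filtered colimits commute with cohomology, it suffices to prove that for each $j \geq i$, the map $v_1 \colon \mathbb{F}_p(j)(A) \to \mathbb{F}_p(j+p-1)(A)$ has fiber in $\mathcal{D}^{\geq j+1}$; this pointwise bound, propagated through the telescope by long exact sequences, yields the desired bound on the fiber of $\mathbb{F}_p(i)(A) \to \mathbb{F}_p(i)(A[1/p])$.

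Next, using the presentation $\mathbb{F}_p(j)(A) = \mathrm{fib}(\mathcal{N}^{\geq j}\Prism_A\{j\}/p \xrightarrow{\mathrm{can} - \phi_j} \Prism_A\{j\}/p)$, I would realize the $v_1$-action by multiplication by $\widetilde{\theta} \in H^0(\mathcal{N}^{\geq p}\Prism_{\mathbb{Z}_p}/p\{p-1\})$ from \Cref{widetildethetaimage}; because $\widetilde{\theta}$ is $\phi_{p-1}$-fixed, this multiplication is compatible with both the inclusion $\mathrm{can}$ and the divided Frobenius $\phi_j$, so it induces $v_1$ on the equalizer. $F$-smoothness ensures that the Nygaard filtration on the relevant complexes is complete, so the fiber of $\widetilde{\theta}$-multiplication can be controlled via its associated graded pieces. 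By \Cref{widetildethetaimage}, on the $n$-th Nygaard piece the induced map is precisely $\theta$-multiplication $\mathcal{N}^n \Prism_A/p \to \mathcal{N}^{n+p}\Prism_A/p$, whose fiber lies in $\mathcal{D}^{\geq n+2}$ by weak $F$-smoothness (\Cref{DefWeakFSmooth}, which follows from $F$-smoothness via \Cref{FSmoothWeak1}) combined with $p$-torsion-freeness of $A$.

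The main obstacle is the final assembly step: the pointwise graded bounds must be combined into a single bound on $\mathrm{fib}(v_1)$ for the whole equalizer. The source $\mathcal{N}^{\geq j}\Prism_A\{j\}/p$ is naturally filtered starting from Nygaard degree $j$, while the target $\Prism_A\{j\}/p$ requires a different filtration (a shifted Nygaard or Hodge--Tate filtration), since $\mathrm{can}$ and $\phi_j$ interact with filtrations differently. The twisted Nygaard filtration of \Cref{TwistedNygaard}, together with its essential-constancy property \Cref{EssConstTwistNyg}, provides the correct framework: its associated graded pieces are precisely cofibers of $\theta$-multiplication, so the weak $F$-smoothness bound applies uniformly, and inverting $v_1$ recovers the mod-$p$ Hodge--Tate cohomology in line with \Cref{FSmoothWeak1}. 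Carefully tracking the resulting spectral sequence then yields $\mathrm{fib}(v_1) \in \mathcal{D}^{\geq j+1}$, completing the proof.
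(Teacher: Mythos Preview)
Your overall architecture matches the paper's: reduce via the \'etale comparison to bounding the fiber of $v_1 : \mathbb{F}_p(j)(A) \to \mathbb{F}_p(j+p-1)(A)$, and attack this via the equalizer presentation and the lift $\widetilde{\theta}$. Two points need correction or sharpening.

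First, a small factual slip: $\widetilde{\theta}$ is not itself ``$\phi_{p-1}$-fixed''. It is the unique lift of $v_1$ from $\mathcal{N}^{\geq p-1}$ to $\mathcal{N}^{\geq p}$; the fixed-point property belongs to $v_1$. What makes multiplication by $v_1$ compatible with both $\mathrm{can}$ and $\phi$ is simply that $v_1 \in H^0(\mathbb{F}_p(p-1)(\mathbb{Z}))$ lies in the equalizer by construction. The role of $\widetilde{\theta}$ is different and more subtle, as follows.

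Second, and more importantly, your assembly step is where the argument is incomplete, and the paper's resolution is not the symmetric ``filter everything by the twisted Nygaard and run a spectral sequence'' that you sketch. Writing the fiber of $v_1$ on $\mathbb{F}_p(j)$ as the equalizer of the induced maps
\[
\mathrm{fib}\!\left(\mathcal{N}^{\geq j}\tfrac{\Prism_A\{j\}}{p} \xrightarrow{v_1} \mathcal{N}^{\geq j+p-1}\tfrac{\Prism_A\{j+p-1\}}{p}\right) \;\rightrightarrows\; \mathrm{fib}\!\left(\tfrac{\Prism_A\{j\}}{p} \xrightarrow{v_1} \tfrac{\Prism_A\{j+p-1\}}{p}\right),
\]
the paper treats the two arrows \emph{asymmetrically}. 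For the divided Frobenius arrow, one uses directly that each $\phi_j$ and $\phi_{j+p-1}$ has fiber in $\mathcal{D}^{\geq j+2}$ (this is the content of the preceding proposition, coming from $F$-smoothness on associated gradeds), so the induced map between $v_1$-fibers also has fiber in $\mathcal{D}^{\geq j+2}$. For the canonical arrow, one observes that since $v_1$ lifts to $\widetilde{\theta} \in \mathcal{N}^{\geq p}$, the canonical map factors through
\[
\mathrm{fib}\!\left(\mathcal{N}^{\geq j-1}\tfrac{\Prism_A\{j\}}{p} \xrightarrow{\widetilde{\theta}} \mathcal{N}^{\geq j+p-1}\tfrac{\Prism_A\{j+p-1\}}{p}\right),
\]
which lies in $\mathcal{D}^{\geq j+1}$ by \Cref{lemfiltsegal} (i.e., your weak-$F$-smoothness bound applied level by level to the complete Nygaard filtration). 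Hence the canonical arrow is zero on cohomology in degrees $\leq j$. One is then taking the equalizer of two maps, one of which is an isomorphism in degrees $\leq j$ and the other zero there; the equalizer therefore lies in $\mathcal{D}^{\geq j+1}$. Your proposal contains all the ingredients (\Cref{lemfiltsegal} is exactly the graded bound you describe), but the decisive move---using the extra Nygaard step of $\widetilde{\theta}$ to kill the canonical map in low degrees, while handling the Frobenius map separately via the $\phi_j$-fiber bound---is what is missing.
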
 

Without loss of generality, we may assume $A$ is $p$-henselian. 
To prove this result, we use \Cref{etcompthm}. Using this, we are reduced to understanding the effect of multiplying with the class $v_1$  on the syntomic cohomology of $A$. Recall that the latter is defined as an equalizer:
\[ \mathbb{F}_p(i)(A) =  \mathrm{eq} \left( \mathcal{N}^{\geq p-1} \Prismp{A}{p-1} \rightrightarrows \Prismp{A}{p-1}\right).\]
of the Frobenius and canonical maps. To analyze the behaviour of cupping with $v_1$ with respect to the fibre of the canonical map above, we shall use the relation of $v_1$ with $\tilde{\theta}$ and the following result:

\begin{lemma} 
\label{lemfiltsegal}
Let $A$ be a $p$-torsionfree $p$-quasisyntomic ring which is $F$-smooth. 
Then for each $i, j$, the fiber of the multiplication map
\begin{equation} \label{filteredv1} \tilde{\theta}: \mathcal{N}^{\geq j} \Prismp{A}{i} \to \mathcal{N}^{\geq
j+p} \Prismp{A}{i+p-1}  \end{equation}
belongs to $\mathcal{D}^{\geq j+2}(\mathbb{F}_p)$. 
\end{lemma}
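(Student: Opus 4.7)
The plan is to filter both sides of \eqref{filteredv1} by the Nygaard filtration and reduce the claim to the behavior of $\theta$-multiplication on each Nygaard graded piece, where weak $F$-smoothness of $A$ directly provides the needed cohomological bound.

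More precisely, $F$-smoothness of $A$ implies that the Nygaard filtration on $\Prism_A\{i\}/p$ is complete for every $i$ (by \Cref{filtSegaldef} and \Cref{whentwistedcomplete}), so \eqref{filteredv1} is a map of complete Nygaard-filtered objects, with the filtration on the target shifted down by $p$ to match source and target in filtration degree. Passing to the $k$-th associated graded piece (for $k \geq j$), the multiplication by $\tilde{\theta}$ becomes multiplication by the image of $\tilde{\theta}$ in $\mathcal{N}^p \Prism_{\mathbb{Z}_p}/p$, which by \Cref{widetildethetaimage} agrees with $\theta$ up to a unit. The fiber on this graded piece is then, after trivializing the invertible Breuil--Kisin twists, the fiber of $\theta \colon \mathcal{N}^k \Prism_A/p \to \mathcal{N}^{k+p} \Prism_A/p$, which by weak $F$-smoothness (\Cref{FSmoothWeak1}, \Cref{DefWeakFSmooth}) lies in $\mathcal{D}^{\geq k+2}(\mathbb{F}_p)$, using the $p$-torsionfreeness of $A$. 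Thus every graded piece of the fiber of \eqref{filteredv1} sits in $\mathcal{D}^{\geq k+2} \subset \mathcal{D}^{\geq j+2}$.

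To conclude, the plan is to assemble these graded bounds into a bound on the total fiber $\Psi^{\geq j}$. Each finite truncation $\Psi^{\geq j}/\Psi^{\geq N}$ lies in $\mathcal{D}^{\geq j+2}$ by induction along the Postnikov tower of the filtration; the essential constancy \Cref{EssConstTwistNyg} (which follows from weak $F$-smoothness) ensures that in each fixed cohomological degree the inverse system $\{\Psi^{\geq j}/\Psi^{\geq N}\}_N$ is eventually constant, killing any potential $\lim^1$ contribution upon passage to $\varprojlim_N$. Completeness of the two Nygaard filtrations (hence of $\Psi^{\geq \bullet}$) then yields $\Psi^{\geq j} \in \mathcal{D}^{\geq j+2}(\mathbb{F}_p)$, as desired. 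The main obstacle is essentially bookkeeping: weak $F$-smoothness is phrased in terms of the untwisted $\theta$, while our map is controlled by the twisted lift $\tilde{\theta}$, and one must identify the induced graded map with the untwisted $\theta$-multiplication using invertibility of the Breuil--Kisin twists.
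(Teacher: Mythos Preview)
Your argument is correct and follows essentially the same route as the paper: filter both sides by the (complete) Nygaard filtration, identify the induced map on associated graded pieces with multiplication by $\theta$ via \Cref{widetildethetaimage}, and invoke the $\mathcal{D}^{\geq k+2}$ bound on each graded fiber coming from (weak) $F$-smoothness together with $p$-torsionfreeness. Your concern about $\lim^1$ contributions is unnecessary, since inverse limits preserve the coconnective subcategory $\mathcal{D}^{\geq j+2}(\mathbb{F}_p)$; once each finite stage lies there and the filtration is complete, the total fiber automatically does too.
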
 
\begin{proof} 
The $F$-smoothness assumption shows that for each $j'$, the fiber of 
$\theta: \mathcal{N}^{j'} \frac{\Prism_A}{p} \to \mathcal{N}^{j' + p}
\frac{\Prism_A}{p}$ belongs to $\mathcal{D}^{\geq j'+2}(\mathbb{F}_p)$. By filtering both
sides (by the Nygaard filtration, which is complete by $F$-smoothness) of \eqref{filteredv1}, the conclusion of the
lemma follows, in light of \Cref{widetildethetaimage}. 
\end{proof} 

\begin{proposition} 
Suppose $A$ is a $p$-torsionfree $p$-quasisyntomic ring which is $F$-smooth.
For each $i \in \mathbb{Z}$, the Frobenius map
\begin{equation} \label{phiimap}  \phi_i:  
\mathcal{N}^{\geq i} \Prismp{A}{i} \to \Prismp{A}{i} 
\end{equation}
has fiber in $\mathcal{D}^{\geq i+2}(\mathbb{F}_p)$. 
\end{proposition}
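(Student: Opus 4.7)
The plan is to refine $\phi_i$ into a map of complete filtered objects whose associated graded maps coincide, up to harmless twists, with the divided Frobenii $\phi_j$ appearing in the $F$-smoothness hypothesis, and then read off the connectivity from the gradeds. Since $\mathcal{N}^{\geq j}\Prism_A = \phi^{-1}(I^j \Prism_A)$ by definition, the $i$-th divided Frobenius $\phi_i \colon \Prism_A\{i\} \to I^{-i}\Prism_A\{i\}$ restricts to a map $\mathcal{N}^{\geq j}\Prism_A\{i\} \to I^{j-i}\Prism_A\{i\}$ for each $j \geq i$. Reducing modulo $p$, one obtains a filtered refinement
\[ \phi_i^{\mathrm{fil}} \colon \bigl\{\mathcal{N}^{\geq \bullet}\Prism_A\{i\}/p\bigr\}_{\bullet \geq i} \longrightarrow \bigl\{I^{\bullet - i}\Prism_A\{i\}/p\bigr\}_{\bullet \geq i} \]
of $\phi_i$, which recovers the original map at the bottom filtration level $\bullet = i$ (where $I^0 = \Prism_A$).

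On the $j$-th associated graded piece ($j \geq i$), the source becomes $\mathcal{N}^j \Prism_A\{i\}/p$. The target becomes $\Prismbar_A\{j\}/p$ via the standard identifications $I^{j-i}/I^{j-i+1} \simeq \Prismbar_A\{j-i\}$ and the multiplicativity of the Breuil--Kisin twists. Unwinding the definitions, the induced map is the $j$-th divided Frobenius $\phi_j \colon \mathcal{N}^j \Prism_A \to \Prismbar_A\{j\}$ appearing in the definition of $F$-smoothness, tensored with an invertible module over $\Prismbar_A/p$ coming from the twist. Since tensoring with an invertible module does not alter $p$-complete $\mathrm{Tor}$-amplitude, $F$-smoothness implies that the fiber of the $j$-th graded map lies in $\mathcal{D}^{\geq j+2}(\mathbb{F}_p)$.

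Passing to the total fiber of $\phi_i^{\mathrm{fil}}$, we obtain a filtered object whose $j$-th graded sits in $\mathcal{D}^{\geq j+2}(\mathbb{F}_p) \subset \mathcal{D}^{\geq i+2}(\mathbb{F}_p)$ for all $j \geq i$. This filtration is complete: the source is Nygaard-complete by the $F$-smoothness hypothesis, while the target is $I$-adically complete because $\Prism_A$ is $(p, I)$-complete as a prism (so $\Prism_A\{i\}/p$ is $I$-adically complete as an invertible module). Consequently $\mathrm{fib}(\phi_i) \in \mathcal{D}^{\geq i+2}(\mathbb{F}_p)$, as required.

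The main point requiring care is the identification of the $j$-th graded map with the twisted divided Frobenius $\phi_j$; this is a direct bookkeeping exercise with the definitions of the divided Frobenii and the Breuil--Kisin twists, but it is where one must be careful to track the various twists correctly.
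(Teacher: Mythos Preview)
Your proposal is correct and follows essentially the same approach as the paper: filter the map $\phi_i$ by the Nygaard filtration on the source and the $I$-adic (Hodge--Tate) filtration on the target, identify $\mathrm{gr}^j$ with the divided Frobenius $\phi_j \colon \mathcal{N}^j \Prism_A/p \to \Prismbar_A\{j\}/p$, and invoke $F$-smoothness for the connectivity bound on each graded piece together with completeness of both filtrations. The paper's proof is a one-line sketch of exactly this argument, noting that the filtration and its associated gradeds are to be constructed by descent from quasiregular semiperfectoid rings; your write-up simply spells out the details more explicitly.
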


\begin{proof}
In fact, this follows because 
the map \eqref{phiimap} 
admits a complete descending filtration, indexed over $j \geq i$, with 
$\mathrm{gr}^j$ given by 
$\phi_j: \mathcal{N}^j \frac{\Prism_A}{p} \to
\frac{{\Prismbar_A} \left\{j\right\}}{p}$; this is clear from the definition of
the Nygaard filtration via descent from quasiregular semiperfectoid rings; now
$F$-smoothness gives the cohomological bound on the fiber of the map on
associated graded terms. 
\end{proof}

\begin{proof}[Proof of \Cref{mainresult}] 

We will show that the map
\begin{equation} \label{v1fib}  v_1: \mathbb{F}_p(i)(A) \to
\mathbb{F}_p(i+p-1)(A)  \end{equation}
has fiber in $\mathcal{D}^{\geq i+1}(\mathbb{F}_p)$; this will suffice thanks to the
\'etale comparison (\Cref{etcompthm}). 
Without loss of generality, we can assume $A$ is $p$-henselian. 
By construction, the fiber of \eqref{v1fib} is the equalizer of the two maps
(arising from the canonical map and divided Frobenius map)
\begin{equation} \label{twokeymaps}  \mathrm{fib}\left( \mathcal{N}^{\geq i} \Prismp{A}{i} 
\xrightarrow{v_1} \mathcal{N}^{\geq
i+p-1}\Prismp{A}{i+p-1} \right)
\rightrightarrows  
\mathrm{fib}\left( \Prismp{A}{i}  \xrightarrow{v_1} \Prismp{A}{i+p-1} \right)
.\end{equation}

By assumption, since $A$ is $F$-smooth, the Frobenius maps 
$$\phi_i : \mathcal{N}^{\geq i} \Prismp{A}{i} \to \Prismp{A}{i}, \quad 
\phi_{i+p-1} : 
\mathcal{N}^{\geq i+p-1} \Prismp{A}{i+p-1} \to \Prismp{A}{i+p-1} $$
have fibers in $\mathcal{D}^{\geq i+2}(\mathbb{F}_p)$. 
Therefore, by taking fibers of multiplication by $v_1$, we find that the fiber
of the Frobenius maps in \eqref{twokeymaps} belong to $\mathcal{D}^{\geq
i+2}(\mathbb{F}_p)$. 

Now consider the canonical map
in \eqref{twokeymaps}; we claim that 
it induces the zero map in cohomological degrees
$\leq i$. 
To see this, 
we observe that the canonical map factors through the map 
\begin{equation}  \mathrm{fib}\left( \mathcal{N}^{\geq i} \Prismp{A}{i} 
\xrightarrow{v_1} \mathcal{N}^{\geq
i+p-1}\Prismp{A}{i+p-1} \right)
\to 
\mathrm{fib}\left( \mathcal{N}^{\geq i-1} \Prismp{A}{i} 
\xrightarrow{\tilde{\theta}} \mathcal{N}^{\geq
i+p-1}\Prismp{A}{i+p-1} \right)
\end{equation}
as $v_1 \in \mathcal{N}^{\geq p-1} \frac{\Prism_{\mathbf{Z}_p}\{p-1\}}{p}$ lifts
to $\tilde{\theta} \in \mathcal{N}^{\geq p}
\frac{\Prism_{\mathbf{Z}_p}\{p-1\}}{p}$. However, we have seen that the
right-hand-side of the above belongs to $ \mathcal{D}^{\geq
i+1}(\mathbb{F}_p)$ thanks to \Cref{lemfiltsegal}. 
This implies that the canonical map vanishes in degrees $\leq i$. 

Thus, we find that 
the desired fiber of the map \eqref{v1fib}
is the equalizer of two maps \eqref{twokeymaps}, one of which has fiber in
$\mathcal{D}^{\geq i+2}(\mathbb{F}_p)$, and one of which is zero in degrees $\leq i$. 
This implies the result. 
\end{proof} 

\subsection{Generation by symbols}

In this section, we complete the proof of \Cref{Zpiregularschemes} from the
introduction. 
First, we prove the following basic symbolic generation result. 
For more refined results about the connection of the  
$\{H^i(
\mathbb{Z}/p^n(i)(R))\}$ to $p$-adic Milnor $K$-theory, cf.~\cite{LM21}. 
In the following, we use that for any ring $R$, we have a natural
Kummer map $R^{\times} \to H^1( \mathbb{Z}_p(1)(R))$, cf.~\Cref{Zp1Gm}. 
Iterating, we obtain a ``symbol'' map $(R^{\times})^{\otimes i} \to H^i(
\mathbb{Z}_p(i)(R))$. 
\begin{proposition} 
\label{symbolicgen}
For any strictly henselian local ring $R $, the symbol map
$(R^{\times})^{\otimes i} \to H^i(
\mathbb{Z}/p^n(i)(R))$ is surjective. 
\end{proposition}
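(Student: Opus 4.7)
The plan is to argue by cases on the characteristic behavior of $R$, reducing each case to a (near-)classical symbolic generation statement.

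If $p$ is invertible in $R$, then by \Cref{recovertatetwist} we have $\mathbb{Z}/p^n(i)(R) \simeq R\Gamma_{\mathrm{et}}(\mathrm{Spec}(R), \mu_{p^n}^{\otimes i})$, which vanishes in positive cohomological degrees since $R$ is strictly henselian; the assertion is then vacuous for $i \geq 1$ and trivial for $i = 0$. If $R$ is of characteristic $p$, then by \Cref{syntomicequalchar} we have $H^i(\mathbb{Z}/p^n(i)(R)) = W_n\Omega^i_{\log}(R)$, and $W_n\Omega^i_{\log}$ is by construction the \'etale subsheaf of $W_n\Omega^i$ generated by $d\log$-symbols of units, so the symbol map surjects onto the value at any strictly henselian local ring.

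In the remaining mixed characteristic case, where $p \in \mathfrak{m}_R$ and $R$ is $\mathbb{Z}$-flat, strict henselianness implies $p$-henselianness, so the gluing fibre square from the introduction collapses and we may compute $\mathbb{Z}/p^n(i)(R)$ via the prismatic equalizer formula applied to the $p$-completion $\widehat{R}$:
\[
\mathbb{Z}/p^n(i)(R) \simeq \mathrm{fib}\bigl(\mathrm{can} - \phi_i : \mathcal{N}^{\geq i}\widehat{\Prism}_{\widehat{R}}\{i\}/p^n \to \widehat{\Prism}_{\widehat{R}}\{i\}/p^n\bigr).
\]
Using the connectivity bound $\mathcal{N}^j \Prism \in \mathcal{D}^{\leq j}(\mathbb{Z}_p)$ of \cite[Rem.~5.5.9]{BhattLurieAPC}, the long exact sequence on cohomology shows that $H^i(\mathbb{Z}/p^n(i)(R))$ carries a two-step filtration whose top piece is a subquotient of $H^i(\mathcal{N}^i\widehat{\Prism}_R\{i\}/p^n)$ and whose bottom piece is a quotient of $H^{i-1}(\widehat{\Prism}_R\{i\}/p^n)$.

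For the top piece, the Hodge--Tate comparison and the map $\mathcal{N}^i \Prism_R \to \Prismbar_R\{i\}$ express the relevant classes in terms of (a quotient of) $p$-adic differentials $\widehat{\Omega}^i_{R/\mathbb{Z}}$; on a local ring these are generated by wedges of $d\log$'s of units, which are precisely the images of symbols under the cup product of the Kummer map of \Cref{Zp1Gm}. The hard part will be controlling the boundary contribution from $H^{i-1}(\widehat{\Prism}_R\{i\}/p^n)$: one must show every class there is also hit by some symbol. The plan here is to induct on $n$ via the Bockstein triangle $\mathbb{Z}/p^{n-1}(i) \to \mathbb{Z}/p^n(i) \to \mathbb{Z}/p(i)$, reducing to $n = 1$, and then to invoke the \'etale comparison theorem \Cref{etcompthm} together with classical Bloch--Kato type generation for $p$-adic vanishing cycles to identify the relevant boundary classes with symbols coming from $R[1/p]^\times$.
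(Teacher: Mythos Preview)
Your outline has two genuine gaps.

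First, you never reduce to the case where $R$ is well-behaved (e.g., ind-smooth over $\mathbb{Z}$). Your characteristic $p$ case invokes \Cref{syntomicequalchar}, but that identification is only stated for \emph{regular} $\mathbb{F}_p$-schemes; for an arbitrary strictly henselian local $\mathbb{F}_p$-algebra the identification with $W_n\Omega^i_{\log}$ need not hold. Similarly, in the mixed characteristic case your analysis of $\mathcal{N}^i\Prism_R$ via $\widehat{\Omega}^i_{R/\mathbb{Z}}$ implicitly assumes $F$-smoothness (or at least $p$-complete flatness of the cotangent complex), which fails for a general strictly henselian local ring. The paper handles this by first reducing to $n=1$ via the connectivity bound, and then using the left Kan extension property of $\mathbb{F}_p(i)(-)$ on $p$-henselian rings (\cite[Th.~5.1]{AMNN} or \cite[Prop.~7.4.8]{BhattLurieAPC}) to reduce to $R$ the strict henselization of a characteristic $p$ point on a smooth $\mathbb{Z}$-scheme. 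You are missing this step entirely.

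Second, and more seriously, your strategy for the ``bottom piece'' is aimed at the wrong target. You propose to use the \'etale comparison and Bloch--Kato to show that the boundary classes are hit by symbols from $R[1/p]^\times$. But the proposition requires symbols from $R^\times$, and the gap between these is precisely the nontrivial content. The paper's approach is quite different from your direct filtration analysis: once $R$ is ind-smooth over $\mathbb{Z}$, \Cref{mainresult} gives an \emph{injection} $H^i(\mathbb{F}_p(i)(R)) \hookrightarrow H^i(\spec(R[1/p]),\mu_p^{\otimes i})$, so it suffices to identify the image. The image visibly contains the $R^\times$-symbols; to show it is no larger, the paper proves (\Cref{naturalcompositevanishes}) that the composite with the Bloch--Kato residue map $H^i(\spec(R[1/p]),\mu_p^{\otimes i}) \to \Omega^{i-1}_{R/p,\log}$ must vanish for sheaf-theoretic reasons, and then checks (\Cref{symbolsworkedout}) via the Bloch--Kato filtration that the kernel of the residue is exactly the subgroup generated by $R^\times$-symbols. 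Your proposed direct analysis of the prismatic equalizer does not supply a mechanism to rule out the extra classes coming from symbols involving $p \in R[1/p]^\times$.
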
 

To prove \Cref{symbolicgen}, it clearly suffices to assume that $R$ is $p$-henselian and that $n = 1$, using
the connectivity bound 
$\mathbb{Z}/p^n(i)(R) \in \mathcal{D}^{\leq i}(\mathbb{Z}/p^n)$, cf.~\cite[Cor.~5.43]{AMNN}. 
By the left Kan extension property of the $\mathbb{F}_p(i)(-)$ for $p$-henselian
rings (\cite[Th.~5.1]{AMNN} or \cite[Prop.~7.4.8]{BhattLurieAPC}), we may assume that $R$ is the strict henselization at a
characteristic $p$ point of a smooth $\mathbb{Z}$-scheme. 
In this case, we know by \Cref{mainresult} and \Cref{regularringsSegal} that 
the natural map induces an injection
\begin{equation}  H^i( \mathbb{F}_p(i)(R)) \subset H^i( \spec(R[1/p]), \mu_p^{\otimes i}) =
H^i( \mathbb{F}_p(i)(R[1/p])),  \label{injection}  \end{equation}
and we will identify the left-hand-side as the subgroup of the right-hand-side
generated by symbols. 

We now recall some of the work of Bloch--Kato \cite{BK86}, which 
describes the right-hand-side of \eqref{injection}; it will be convenient to
formulate the assertion sheaf-theoretically. 

Let $X $ be a smooth $\mathbb{Z}$-scheme, let $j: X[1/p] \subset X, i: Y
\stackrel{\mathrm{def}}{=} X
\times_{\spec(\mathbb{Z})}
\spec(\mathbb{F}_p) \subset X$ be the respective open and closed immersions
corresponding to the ideal $(p)$. 
The work \cite{BK86} 
describes the \'etale sheaves of $\mathbb{F}_p$-modules on $Y$,
\begin{equation} M^i \stackrel{\mathrm{def}}{=} i^* R^ij_* (\mu_{p}^{\otimes i}) .  \end{equation}
In particular, 
using the map $i^* j_* \mathcal{O}_X^{\times} \to M^1$ arising from the Kummer
sequence and the graded ring structure on the $\left\{M^i\right\}$, 
one has a symbol map
\begin{equation}  \label{genericsymbolmap} i^* j_*(
\mathcal{O}_{X[1/p]}^{\times})^{\otimes i} \to M^i.
\end{equation}
By \cite[Th.~14.1]{BK86}, the symbol map is surjective. 
Moreover, by \cite[6.6]{BK86}, one has a surjective residue map of
$\mathbb{F}_p$-sheaves
\begin{equation} \label{resmap} \mathrm{res}: M^i \twoheadrightarrow \Omega^{i-1}_{Y,
\mathrm{log}} . \end{equation}

\begin{proposition} 
\label{symbolsworkedout}
If $R$ is a strictly henselian local ring which is ind-smooth over $\mathbb{Z}$,
then
the kernel of the surjective residue map 
\eqref{resmap}
$H^i( \spec(R[1/p]), \mu_p^{\otimes i}) \to \Omega^{i-1}_{R/p, \mathrm{log}}$
is the subgroup of $H^i( \spec(R[1/p]), \mu_p^{\otimes i})$ generated by symbols from $R$, i.e., by
the image of $R^{\times} \otimes \dots \otimes R^{\times}$ under the symbol map
$R[1/p]^{\times} \otimes \dots R[1/p]^{\times} \twoheadrightarrow 
H^i( \spec(R[1/p]), \mu_p^{\otimes i})$, as in  
\eqref{genericsymbolmap}. 
\end{proposition}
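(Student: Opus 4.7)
The plan is to deduce the claim from the structure theory of the sheaf $M^i$ developed in \cite{BK86}. First, a routine reduction: writing $R = \varinjlim_\alpha R_\alpha$ as a filtered colimit of strict henselizations of \emph{finite-type} smooth $\mathbb{Z}$-algebras at characteristic $p$ points, both $H^i(\mathrm{Spec}(R[1/p]), \mu_p^{\otimes i})$ and $\Omega^{i-1}_{R/p, \mathrm{log}}$ commute with filtered colimits in $R$, as does the formation of the subgroup generated by symbols from $R^\times$. We may therefore assume $R = \mathcal{O}_{X,y}^{sh}$ for $X$ a smooth $\mathbb{Z}$-scheme of finite type and $y \in Y = X \times_{\mathrm{Spec}(\mathbb{Z})} \mathrm{Spec}(\mathbb{F}_p)$ a closed point, so that the stalk $(M^i)_{\bar y}$ is precisely $H^i(\mathrm{Spec}(R[1/p]), \mu_p^{\otimes i})$.

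Next, introduce the subsheaf $U^1 M^i \subseteq M^i$ generated by symbols $\{a_1, \ldots, a_i\}$ whose entries lie in $i^* \mathcal{O}_X^\times$ rather than in the larger sheaf $i^*j_* \mathcal{O}_{X[1/p]}^\times$; by construction, its stalk at $\bar y$ equals the subgroup of $H^i(\mathrm{Spec}(R[1/p]), \mu_p^{\otimes i})$ generated by $R^\times$-symbols. The key input from \cite[\S 6]{BK86} is that the residue map \eqref{resmap} annihilates $U^1 M^i$ and descends to an \emph{isomorphism} $M^i/U^1 M^i \xrightarrow{\sim} \Omega^{i-1}_{Y, \mathrm{log}}$; explicitly, for $p$ a local equation of $Y$ in $X$ and $u_j \in \mathcal{O}_X^\times$, the symbol $\{p, u_2, \ldots, u_i\}$ is sent to $d\log \bar u_2 \wedge \cdots \wedge d\log \bar u_i$. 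Combined with the surjectivity of the symbol map from $j_* \mathcal{O}_{X[1/p]}^\times$ \cite[Th.~14.1]{BK86}, this yields the short exact sequence
\[ 0 \to U^1 M^i \to M^i \xrightarrow{\mathrm{res}} \Omega^{i-1}_{Y,\mathrm{log}} \to 0, \]
and taking stalks at $\bar y$ gives the desired identification of $\ker(\mathrm{res})$ with the subgroup generated by $R^\times$-symbols.

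The main obstacle is the Bloch--Kato isomorphism $M^i/U^1 M^i \xrightarrow{\sim} \Omega^{i-1}_{Y, \mathrm{log}}$: although \cite{BK86} formulates it for smooth schemes over a mixed characteristic DVR, it applies in our setting because the local analytic structure at a characteristic $p$ closed point of a smooth $\mathbb{Z}$-scheme is identical, with $p$ playing the role of the uniformizer. Formally, one may base-change $X$ along $\mathbb{Z} \to \mathbb{Z}_p$ to reduce to the classical setup, using that strict henselizations at characteristic $p$ points are unchanged by this base change. A subsidiary point requiring care is the inclusion $\ker(\mathrm{res}) \subseteq U^1 M^i$ (the reverse inclusion being immediate), which in \cite{BK86} is obtained by combining surjectivity of res with the identification of the symbolic generators of $M^i$ modulo $U^1$, both of which are provided by \cite[Th.~14.1, \S 6]{BK86}.
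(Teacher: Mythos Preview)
Your approach is essentially the same as the paper's: both extract the result from the Bloch--Kato structure theory of $M^i$. The paper works directly with the short exact sequence
\[ 0 \to \Omega^{i-1}_{R/p} \to H^i(\spec(R[1/p]),\mu_p^{\otimes i}) \to \Omega^i_{R/p,\mathrm{log}} \oplus \Omega^{i-1}_{R/p,\mathrm{log}} \to 0 \]
from \cite[Cor.~1.4.1]{BK86}, checks that the subgroup $B$ of $R^\times$-symbols contains the left term, and then reads off from the explicit formulas in \cite[6.6]{BK86} that the image of $B$ in the right term is exactly $\Omega^i_{R/p,\mathrm{log}} \oplus 0$; this gives $M^i/B \simeq \Omega^{i-1}_{R/p,\mathrm{log}}$ via the residue, which is precisely the isomorphism you assert.

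Two cautions. First, your notation $U^1 M^i$ clashes with Bloch--Kato's own $U^m$-filtration: in \cite{BK86}, $U^1 M^i$ is the subsheaf generated by symbols with at least one entry in $1+\pi\mathcal{O}$, and one has $M^i/U^1_{\mathrm{BK}} M^i \simeq \Omega^i_{\mathrm{log}} \oplus \Omega^{i-1}_{\mathrm{log}}$, not just $\Omega^{i-1}_{\mathrm{log}}$. Your $U^1 M^i$ strictly contains theirs, and the passage from one quotient to the other is exactly the computation the paper writes out. Second, the isomorphism $M^i/(\text{your }U^1 M^i) \xrightarrow{\sim} \Omega^{i-1}_{Y,\mathrm{log}}$ is not stated as such in \cite[\S 6]{BK86}; it is a consequence one assembles from the filtration and the symbol formulas, so your citation is a bit loose. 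Neither point is a gap in the mathematics, but a reader comparing with \cite{BK86} would be confused by the notation.
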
 
\begin{proof} 
Let $B \subset H^i( \spec(R[1/p]), \mu_p^{\otimes i})$ be the subgroup generated
by the symbols from $R$. 
The Bloch--Kato filtration \cite[Cor.~1.4.1]{BK86} gives a short exact sequence
\[ 0 \to \Omega^{i-1}_{R/p} \to 
H^i( \spec(R[1/p]), \mu_p^{\otimes i}) \to \Omega^{i}_{R/p, \mathrm{log}} \oplus
\Omega^{i-1}_{R/p, \mathrm{log}} \to 0,
\]
where the second map $H^i( \spec(R[1/p]), \mu_p^{\otimes i}) \to
\Omega^{i-1}_{R/p, \mathrm{log}}$ is the residue \eqref{resmap}. 
By construction of the filtration and the first map \cite[4.3]{BK86}, one sees
that $B$ contains the subgroup 
$\Omega^{i-1}_{R/p} $. 
As in \cite[6.6]{BK86}, 
the map 
$H^i( \spec(R[1/p]), \mu_p^{\otimes i}) \to \Omega^{i}_{R/p, \mathrm{log}} \oplus
\Omega^{i-1}_{R/p, \mathrm{log}}$ 
carries the symbol 
$r_1 \otimes \dots \otimes r_i$ for 
$r_1, \dots, r_i \in R^{\times}$ to 
$( \frac{dr_1}{r_1} \wedge \dots \wedge \frac{dr_i}{r_i}, 0)$, and the symbol 
$r_1 \otimes \dots \otimes r_{i-1} \otimes p$ to 
$(0, 
\frac{dr_1}{r_1} \wedge \dots \wedge \frac{dr_{i-1}}{r_{i-1}})$. 
From this, 
one
sees that $H^i( \spec(R[1/p]), \mu_p^{\otimes i})/B \xrightarrow{\sim}
\Omega^{i-1}_{R/p, \mathrm{log}}$ via the residue, as claimed. 
\end{proof}

Now we return to the proof of \Cref{symbolicgen}, and identify the image of
\eqref{injection}. 
The $\mathcal{D}(\mathbb{F}_p)$-valued sheaf $\mathbb{F}_p(i)(-)$ 
restricts to an object (with the same notation) on the category of ind-smooth, $p$-henselian $\mathbb{Z}$-algebras $R$. 
For any such $R$, we have natural maps
from \eqref{injection} and \eqref{resmap}, 
\[ \mathbb{F}_p(i)(R) \to \mathbb{F}_p(i)(R[1/p]) \xrightarrow{\mathrm{res}} 
R \Gamma_{\et}( \spec(R/p), \Omega^{i-1}_{\cdot, \mathrm{log}})[-i] =
\mathbb{F}_p(i-1)(R/p)[-1],
\]
where the last identification is \cite[Sec.~8]{BMS2} (and reviewed in
\Cref{syntomicequalchar}).
We claim that the composite vanishes. 
In fact, this is true for any such map. 
\begin{proposition} 
\label{naturalcompositevanishes}
Any natural map 
$\mathbb{F}_p(i)(R) \to \mathbb{F}_p(i-1)(R/p)[-1]$, 
 defined on $p$-henselian ind-smooth $\mathbb{Z}$-algebras $R$,
vanishes. 
\end{proposition}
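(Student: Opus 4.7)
The plan is to reduce the vanishing of an arbitrary natural transformation $\eta \colon \mathbb{F}_p(i)(-) \to \mathbb{F}_p(i-1)(-/p)[-1]$ to a computation on a single, sufficiently universal graded test ring, and then exploit the automatic internal weight grading discussed in \Cref{autointernalgrading} to conclude.

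First, I would observe that both functors, restricted to the category of $p$-henselian ind-smooth $\mathbb{Z}$-algebras, are left Kan extended from $p$-henselizations of finitely generated polynomial $\mathbb{Z}$-algebras: for the source this is \cite[Prop.~8.4.10]{BhattLurieAPC}, and for the target it follows by composing the left Kan extension property of $\mathbb{F}_p(i-1)(-)$ on $\mathbb{F}_p$-algebras with the base change $R \mapsto R/p$. Since both sides commute with filtered colimits, any natural transformation $\eta$ is determined by its values on such polynomial rings, so it suffices to prove vanishing in that case.

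Second, I would evaluate $\eta$ on the $p$-henselization $A$ of $\mathbb{Z}[t]$, equipped with the weight grading $|t| = 1$ (and, if necessary, pass to $\mathbb{Z}[t^{1/p^\infty}]$ to access the refined $\mathbb{Z}[1/p]_{\geq 0}$-grading). By the general grading-lifting principle of \Cref{autointernalgrading}, both $\mathbb{F}_p(i)(A)$ and $\mathbb{F}_p(i-1)(A/p)[-1]$ acquire canonical internal weight gradings, and the naturality of $\eta$ forces $\eta_A$ to be a map of weight-graded objects. The associated graded pieces of the source are controlled by \Cref{prismbarpolyHT} and \Cref{Nygaardpiecespolynomialquotient}, while the associated graded pieces of the target are governed by the corresponding de Rham/logarithmic description in characteristic $p$. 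The weight-$0$ component reduces to a natural map $\mathbb{F}_p(i)(\mathbb{Z}_p) \to \mathbb{F}_p(i-1)(\mathbb{F}_p)[-1]$; since $\mathbb{F}_p(i-1)(\mathbb{F}_p) = \mathbb{F}_p \cdot \Omega^{i-1}_{\mathbb{F}_p,\log}[-(i-1)] = 0$ for $i \geq 2$, this piece vanishes (and $i \leq 1$ is immediate from cohomological degree reasons, since the target then sits in degrees $\geq 1$ while $\mathbb{F}_p(0) = \mathbb{F}_p$ and $\mathbb{F}_p(1)$ have the appropriate vanishings on $\mathbb{Z}_p$).

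For the positive-weight components, I would bootstrap from the weight-zero case by using that $\eta$ commutes with the multiplicative structure of $\bigoplus_j \mathbb{F}_p(j)(-)$ modulo nothing — more precisely, by comparing $\eta_A$ with $\eta_{A \otimes A}$ via the coaction map $A \to A[s]$ sending $t \mapsto t \cdot s$, one sees that the value of $\eta_A$ on each weight is already determined by $\eta$ on a polynomial-free ring. The main obstacle, and the delicate point of the argument, is ruling out a priori natural "residue-type" transformations in positive weight; I would handle this by passing to the multi-variable universal case $A = \mathbb{Z}[t_1,\ldots,t_n]^h$ and combining the weight-grading constraints with compatibility under the additional automorphisms $t_j \mapsto t_j + p \cdot u$ (which act trivially on $A/p$, hence force $\eta$ to factor through the coinvariants of $\mathbb{F}_p(i)(A)$ under these automorphisms), thereby collapsing all positive-weight contributions back to the already-handled base case.
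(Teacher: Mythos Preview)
Your approach diverges substantially from the paper's, and it has real gaps.

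The paper's proof is a two-line $t$-structure argument: left Kan extend $\eta$ to all quasisyntomic $\mathbb{Z}_p$-algebras, observe that both sides are $\mathcal{D}(\mathbb{F}_p)$-valued sheaves on the quasisyntomic site, and note that on a basis (by the odd vanishing theorem for $\mathbb{Z}_p(i)$) the source is concentrated in cohomological degree $0$ while the target is concentrated in degree $1$. A map from a connective sheaf to a $1$-coconnective sheaf is zero by the $t$-structure axioms. No grading, no bootstrap, no case analysis on $i$.

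Your argument, by contrast, rests on applying \Cref{autointernalgrading} to $\mathbb{F}_p(i)(-)$, and this is the central gap. That remark applies only to functors carrying a natural $R$-module structure together with the base-change property $\mathcal{F}(R)\otimes_R R[t^{1/p^\infty}] \simeq \mathcal{F}(R\langle t^{1/p^\infty}\rangle)$. The syntomic complex $\mathbb{F}_p(i)(R)$ is not an $R$-module and does not satisfy any such K\"unneth formula; it is an equalizer of the canonical map and a Frobenius-semilinear map, and the latter does not respect any weight decomposition coming from the $R$-linear structure on prismatic cohomology. So you have not actually produced a weight grading on $\mathbb{F}_p(i)(A)$ for $A = \mathbb{Z}[t]^h$, and the decomposition on which the rest of the argument relies is not available.

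Even granting a weight decomposition, there are further problems. In weight $0$ with $i=1$, your claim that ``$\mathbb{F}_p(1)$ has the appropriate vanishings on $\mathbb{Z}_p$'' is false: $H^1(\mathbb{F}_p(1)(\mathbb{Z}_p)) \cong \mathbb{Z}_p^{\times}/(\mathbb{Z}_p^{\times})^p \neq 0$, so there is no degree obstruction to a nonzero map $\mathbb{F}_p(1)(\mathbb{Z}_p) \to \mathbb{F}_p[-1]$. And the positive-weight bootstrap via the coaction $t \mapsto ts$ and the automorphisms $t_j \mapsto t_j + pu$ is not an argument as written: you would need to know how $\mathbb{F}_p(i)$ behaves under $A \to A[s]$ (no K\"unneth), and ``factoring through coinvariants'' does not by itself force vanishing without an independent identification of those coinvariants.
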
 
\begin{proof} 
By left Kan extension, we can define a natural map on all quasisyntomic
$\mathbb{Z}_p$-algebras $R$,
$\mathbb{F}_p(i)(R) \to \mathbb{F}_p(i-1)(R/p)[-1]$. 
Both sides define $\mathcal{D}(\mathbb{F}_p)$-valued sheaves for the quasisyntomic
topology. The source is discrete as a sheaf (by the odd vanishing theorem,
\cite[Th.~4.1]{Prisms})
and the target is concentrated in cohomological degree $1$, whence the map must
vanish. 
\end{proof} 

\begin{proof}[Proof of \Cref{symbolicgen}] 
As before, we may assume that $R$ is ind-smooth over $\mathbb{Z}$ and that $n
=1$. 
We have seen that the map $H^i(\mathbb{F}_p(i)(R)) \to
H^i(\mathbb{F}_p(i)(R[1/p]))$ is injective, and its image must contain the
image of $(R^{\times})^{\otimes i}$. 
The image of $H^i( \mathbb{F}_p(i)(R))$ is contained in the kernel
of the residue map thanks to \Cref{naturalcompositevanishes}. 
But by \Cref{symbolsworkedout}, the kernel of the residue maps on $H^i (
\mathbb{F}_p(i)(R[1/p]))$ is precisely the image of $(R^{\times})^{\otimes i}$. The result follows. 
\end{proof} 

\begin{proof}[Proof of \Cref{Zpiregularschemes}] 
Let $X$ be a $p$-torsionfree scheme which is $F$-smooth. 
Thanks to \Cref{mainresult}, the map 
$\mathbb{Z}/p^n(i)_X \to Rj_* ( \mu_{p^n}^{\otimes i})$ has homotopy fiber in 
degrees $\geq i+1$. Since $\mathbb{Z}/p^n(i)_X$ is concentrated in degrees $[0,
i]$ by \cite[Cor.~5.43]{AMNN}, it suffices to identify the image of the
(injective) map $\mathcal{H}^i( \mathbb{Z}/p^n(i)_X) \to  R^i j_* (
\mu_{p^n}^{\otimes i})$. The claim is that it is exactly the subsheaf generated
by symbols on $X$. This follows thanks to the symbolic generation of the source
(\Cref{symbolicgen}). 
\end{proof} 

\subsection{Comparison with Geisser--Sato--Schneider}

In this section, we use the above results to compare the $\mathbb{Z}/p^n(i)_X$
with the complexes defined by Sato \cite{Sato} for semistable schemes, cf.~also earlier work of
Schneider  \cite{Schneider} and Geisser \cite{Gei04} for the smooth case; such a
comparison 
was predicted in \cite[Rem.~1.16]{BMS2}. 

Let $X$ be a regular scheme of finite type over a Dedekind domain $A$ such that
every characteristic $p$ residue field of $A$ is perfect. 
Suppose that $X$ is semistable over characteristic $p$ points of
$\spec(A)$. 
For $n, i \geq 0$, Sato \cite{Sato} constructs objects $\mathfrak{I}_n(i)_X \in
\mathcal{D}^{[0, i]}(X_{\et}, \mathbb{Z}/p^n \mathbb{Z})$ and conjectures
\cite[Conjecture~1.4.1]{Sato} that they can be identified with the \'etale
sheafification of the motivic (cycle) complexes mod $p^n$; in the smooth case this
follows from \cite{Gei04}.  
Here we compare the $\mathfrak{I}_n(i)_X$ to the $\mathbb{Z}/p^n(i)_X$. 

\begin{theorem} 
There is a canonical, multiplicative equivalence 
$\mathfrak{I}_n(i)_X \simeq \mathbb{Z}/p^n (i)_X$ of objects in
$\mathcal{D}^b(X_{\et},
\mathbb{Z}/p^n)$. 
\end{theorem}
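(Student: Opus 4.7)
The plan is to identify both $\mathfrak{I}_n(i)_X$ and $\mathbb{Z}/p^n(i)_X$ as the fibre of a common map of \'etale sheaves on $X$, using the fibre square description of $\mathbb{Z}/p^n(i)_X$ furnished by \Cref{Zpiregularschemes}. Since $X$ is regular, it is $F$-smooth (\Cref{regularringsSegal}), so \Cref{Zpiregularschemes} realizes $\mathbb{Z}/p^n(i)_X$ as the fibre
\[ \mathbb{Z}/p^n(i)_X \simeq \mathrm{fib}\bigl( \tau^{\leq i} Rj_*\mu_{p^n}^{\otimes i} \to Q[-i]\bigr), \]
where $Q := \mathrm{coker}\bigl((\mathcal{O}_X^{\times})^{\otimes i} \to R^i j_*\mu_{p^n}^{\otimes i}\bigr)$ is the cokernel of the Kummer symbol map.

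First, I would recall Sato's construction from \cite{Sato}. In the regular semistable setting, $\mathfrak{I}_n(i)_X$ sits in a triangle of the form
\[ \mathfrak{I}_n(i)_X \to \tau^{\leq i} Rj_*\mu_{p^n}^{\otimes i} \xrightarrow{\partial} i_*\nu_{n,Y}^{i-1}[-i], \]
where $\partial$ is the refined Bloch--Kato residue and $\nu_{n,Y}^{i-1}$ is the (modified) logarithmic de Rham--Witt sheaf on the special fiber $Y = X \times_{\spec(A)} \spec(\mathbb{F}_p)$. Thus the theorem reduces to identifying the residue target $i_*\nu_{n,Y}^{i-1}$ with the symbol cokernel $Q$.

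This identification is essentially classical. In the smooth case it is precisely \Cref{symbolsworkedout}, which rests on \cite[Thm.~1.4, Thm.~6.6]{BK86}. In the regular semistable case it is supplied by Hyodo \cite{Hyodo88} together with Sato's symbol computations \cite{SaitoSato}, using the Bloch--Kato--Hyodo filtration on the $p$-adic vanishing cycles: the symbol subsheaf is exactly the kernel of the residue $\partial$, and the residue induces an isomorphism $Q \xrightarrow{\sim} i_*\nu_{n,Y}^{i-1}$. Given this identification, the composite
\[ \mathbb{Z}/p^n(i)_X \to \tau^{\leq i} Rj_*\mu_{p^n}^{\otimes i} \xrightarrow{\partial} i_*\nu_{n,Y}^{i-1}[-i] \]
vanishes, since the image of $\mathcal{H}^i(\mathbb{Z}/p^n(i)_X)$ inside $R^i j_*\mu_{p^n}^{\otimes i}$ is precisely the symbol subsheaf by the last clause of \Cref{Zpiregularschemes}. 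This produces a canonical lift $\mathbb{Z}/p^n(i)_X \to \mathfrak{I}_n(i)_X$, and one checks it is an isomorphism on cohomology sheaves degree by degree: in degrees $< i$ both targets agree with $\mathcal{H}^k Rj_*\mu_{p^n}^{\otimes i}$, and in degree $i$ both are exactly the symbol subsheaf.

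Multiplicativity is then inherited from naturality: both ring structures restrict to the standard cup product on $\mu_{p^n}^{\otimes \ast}$ over $X[1/p]$, and both targets embed into $Rj_*\mu_{p^n}^{\otimes \ast}$ in the relevant cohomological range, so the comparison map automatically respects products after summing over $i$. The main technical step is the identification of $Q$ with Sato's residue target in the semistable case, but this is an assembly of existing results from \cite{BK86, Hyodo88, Sato, SaitoSato} rather than a genuinely new calculation.
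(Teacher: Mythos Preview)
Your overall strategy matches the paper's: both complexes sit as fibres of maps from $\tau^{\leq i} Rj_*\mu_{p^n}^{\otimes i}$ to a discrete sheaf placed in degree $i$, so the comparison reduces to checking that the images of $\mathcal{H}^i$ inside $R^ij_*\mu_{p^n}^{\otimes i}$ coincide. That is exactly what the paper does.

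However, there is a genuine gap at the step you label ``essentially classical.'' In Sato's construction, the stalk of $\mathcal{H}^i(\mathfrak{I}_n(i)_X)$ at a characteristic-$p$ point is, by definition \cite[\S 3.4, Def.~4.2.4]{Sato}, the subgroup $F \subset H^i(\spec(R[1/p]),\mu_p^{\otimes i})$ generated not only by symbols from $(R^\times)^{\otimes i}$ but \emph{also} by symbols of the form $(1+\pi R)^\times \otimes (R[1/p]^\times)^{\otimes(i-1)}$, where $\pi$ is a uniformizer of the base. Your identification $Q \cong i_*\nu_{n,Y}^{i-1}$ is equivalent to the assertion that these extra generators are redundant, i.e.\ that their images already lie in the subgroup generated by $(R^\times)^{\otimes i}$. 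This is not a statement you will find in \cite{Hyodo88} or \cite{SaitoSato}; those references describe the vanishing-cycle filtration and symbol surjectivity, but do not collapse Sato's $F$ down to integral symbols.

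The paper supplies exactly this missing computation. Using that $R$ is a UFD (so $R[1/p]^\times = \pi^{\mathbb{Z}} \oplus R^\times$) together with the Steinberg-type relations $\pi \otimes (-\pi) \mapsto 0$ and $(1+\pi a) \otimes (-\pi a) \mapsto 0$ for $a \in R^\times$, one reduces to $i=2$ and checks directly that $(1+\pi a)\otimes \pi$ lands in the image of $R^\times \otimes R^\times$. This elementary but essential step is the actual content of the proof; once it is in place, your argument (and the paper's) goes through.
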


\begin{proof} 
As in \cite[\S 4.2]{Sato}, 
the complex $\mathfrak{I}_n(i)_X$ is built as the mapping fiber of a map 
from 
$\tau^{\leq i}Rj_*( \mu_{p^n}^{\otimes i}) $ to the $(-i)$-suspension of a
discrete sheaf. 
Therefore, in order to verify the comparison, it suffices (by combining
\Cref{mainresult}, \Cref{regularringsSegal}, and \Cref{symbolicgen}) to show
that the \'etale sheaf 
$\mathcal{H}^i( \mathfrak{I}_n(i)_X)$
is generated by symbols. 
We may assume $n = 1$ for this and work stalkwise. 

Let $R$ denote the strict henselization of a characteristic $p$ point $x \in X$. 
We can replace $A$ by its strict henselization, which is a mixed characteristic
DVR; let $\pi \in A$ denote the uniformizer. 
Consider  the $\mathbb{F}_p$-vector space
$H^i( \spec(R[1/p]), \mu_p^{\otimes i})$. 
We have a 
symbol map 
$\left(R[1/p]^{\times}\right)^{\otimes i} \to H^i( \spec(R[1/p]), \mu_p^{\otimes
i})$. 
Let $ F \subset H^i( \spec(R[1/p]), \mu_p^{\otimes
i})$ be the subgroup generated by the images of 
$(R^{\times})^{\otimes i}$ and $(1 + \pi R)^{\times} \otimes
(R[1/p]^{\times})^{\otimes i-1}$ under the symbol map, cf.~\cite[\S 3.4]{Sato}. 
As in \cite[Def.~4.2.4]{Sato}, the image of the injective map 
$\mathcal{H}^i( \mathfrak{I}_1(i)_X)_x \to H^i( \spec(R[1/p]), \mu_p^{\otimes i})$ is
exactly the subgroup $F$. 

Our observation is that the image of 
$(1 + \pi R)^{\times} \otimes
(R[1/p]^{\times})^{\otimes i-1}$ under the symbol map is actually contained in
the image of $(R^{\times})^{\otimes i}$. 
Since $R$ is a UFD (as a regular local ring), we have $R[1/p]^{\times} = \pi^{\mathbb{Z}} \oplus
R^{\times}$. 
Consider a symbol 
$(1 + \pi a) \otimes b_1 \otimes \dots \otimes b_{i-1}$ for $b_1, \dots,
b_{i-1} \in R[1/p]^{\times}$. 
Using the unique factorization, as well as the fact that $\pi \otimes (-\pi)$
maps to zero in $H^{2}( \spec(R[1/p]), \mu_p^{\otimes 2})$, 
we reduce to the case $i = 2$. 

Therefore, it suffices to show that, for $a \in R$, the image of 
$(1 + \pi a) \otimes \pi$ in $H^2( \spec(R[1/p]), \mu_p^{\otimes 2})$
belongs to the image of $R^{\times} \otimes R^{\times}$. 
By bilinearity, we may assume that $a \in R$ is a unit (e.g., if $a$ is not a
unit, we write 
$(1 + \pi a) = \frac{1  + \pi a }{1 + \pi (a + 1)} ( 1 + \pi(a+1))$). In this case, 
$(1 + \pi a) \otimes (-\pi a )$ maps to zero (cf.~\cite[Th.~3.1]{Tate76}).  
Using bilinearity again, it follows that $(1 + \pi a ) \otimes \pi$ maps to an
element of 
$H^2( \spec(R[1/p], \mu_p^{\otimes 2})$
in the image of $R^{\times} \otimes R^{\times}$. 

Consequently, it follows that the ring
$\bigoplus_{i \geq 0} \mathcal{H}^i(\mathfrak{I}_1(i)_X)_{x}$  is generated by symbols, whence we conclude. 
\end{proof}

\begin{example}
Let $K$ be a discretely valued field of mixed characteristic, and let
$\mathcal{O}_K \subset K$ be the ring of integers; let $k$ be the residue field. 
Let $X$ be a smooth scheme over $\mathcal{O}_K$ with special
fiber $k$.  Then the above results
(together with the description of $p$-adic nearby cycles in \cite{BK86},
cf.~\Cref{symbolsworkedout})
show that we have a natural cofiber sequence
in $\mathcal{D}(X_{\mathrm{et}}, \mathbb{Z}/p^n)$,
\begin{equation}  \mathbb{Z}/p^n(i)_X \to \tau^{\leq i}R j_*
(\mu_{p^n}^{\otimes i}) \to W_n \Omega^{i-1}_{X_k, \mathrm{log}}[-i],     \end{equation}
where the second map is the residue map from \cite{BK86}. 

Such results have appeared in the literature before,
but usually only in low weights or with some denominators, using the approach to
syntomic cohomology
of
\cite{FM87, Kato}, cf.~\cite[Sec.~6]{AMNN} for a comparison. 
In particular, \cite{Kurihara} constructs the above cofiber sequence in low
weights. 
The comparison for semistable schemes and more generally with a log structure after allowing
denominators (in all weights) is \cite{CN17}. 
Integral comparisons 
for algebras over $\mathcal{O}_C$ appear in the smooth case in 
\cite[Th.~10.1]{BMS2} and in the semistable case (allowing log structures) in
\cite{CDN21}; up to isogeny or in low weights, 
this was previously treated in \cite{Kato, Tsuji}. 
\end{example}

\bibliographystyle{amsalpha}
\bibliography{motivic}
\end{document}